\theoremstyle{plain}
\renewcommand{\theequation}{\arabic{section}.\arabic{equation}}
\renewcommand\thefigure{\thesection.\@arabic\c@figure}
\renewcommand\thetable{\thesection.\@arabic\c@table}
\newtheorem{thm}{\bf Theorem}[section]
\newtheorem{cor}{\bf Corollary}[section]
\newtheorem{prop}{Proposition}[section]
\newenvironment{corollary}{\begin{cor}} {\end{cor}}
\newtheorem{lmm}{\bf Lemma}[section]
\newenvironment{lemma}{\begin{lmm}}{\end{lmm}}
\theoremstyle{remark}
\newtheorem{rem}{Remark}[section]
\newcommand{\re}[1]{(\ref{#1})}
\def \ri {{\rm i}}
\def \af {\alpha}
\def \bt {\beta}
\begin{document}
\baselineskip 13pt
\bibliographystyle{plain}
\title[Jacobi expansions and Gegenbauer-Gauss quadrature of analytic functions]
{Sharp error bounds for Jacobi  expansions and Gengenbauer-Gauss  quadrature of analytic functions}
\author[X. Zhao, ~ L. Wang ~  $\&$~  Z. Xie] { Xiaodan Zhao${}^{1},$ \quad Li-Lian Wang${}^{1}$ \quad
and \quad
  Ziqing Xie${}^{2}$}
\thanks{\noindent${}^{1}$ Division of Mathematical Sciences, School of Physical
and Mathematical Sciences,  Nanyang Technological University,
637371, Singapore. The research of the authors is partially
supported by Singapore AcRF Tier 1 Grant RG58/08.\\
\indent${}^{2}$ School of Mathematics and Computer Science, Guizhou
Normal University, Guiyang, Guizhou 550001, China; College of
Mathematics and Computer Science, Hunan Normal University, Changsha,
Hunan 410081, China. The research of the author is partially
supported by the NSFC (10871066) and the Science and Technology
Grant  of Guizhou Province  (LKS[2010]05).
 }
 \keywords{Bernstein ellipse, exponential convergence, analytic functions,  Jacobi polynomials, Gegenbuer-Gauss quadrature, error bounds, sharp estimate}
 \subjclass{65N35, 65E05, 65M70,  41A05, 41A10, 41A25}

\begin{abstract} This paper provides a rigorous and delicate  analysis for  exponential decay of Jacobi polynomial expansions of
analytic functions associated with the Bernstein ellipse.  Using an argument that
 can recover the best estimate for the Chebyshev expansion, we derive various new and sharp bounds of the expansion coefficients, which are  featured with explicit dependence of
 all related parameters and valid for  degree $n\ge 1$.  We demonstrate the sharpness of the estimates by comparing with existing ones, in particular, the very recent results in \cite[SIAM J. Numer. Anal., 2012]{xiang2012error}.
We also extend this argument to estimate the Gegenbauer-Gauss quadrature remainder of analytic functions,
which  leads to some  new tight bounds for quadrature errors. 
\end{abstract}

\maketitle

\thispagestyle{empty}

\vspace*{-12pt}
\section{Introduction}

The spectral method employs global orthogonal polynomials or Fourier
complex exponentials as basis functions,  so it enjoys high-order
accuracy  (with only a few basis functions), if the underlying
function is smooth (and periodic in the Fourier case). The
convergence rate $O(n^{-r})$, where $n$ is the number of basis
functions involved in a spectral expansion and $r$ is related to the
Sobolev-regularity of the underlying function, is typically
documented in various monographs on spectral methods
\cite{gottlieb1977numerical,Funa92,Fornberg96,BernardiMaday97,Guo98,Tref00,Boyd01,CHQZ06,GotHes07,ShenTangWang2011}.
It is also widely appreciated that if the function under consideration is  analytic, the  convergence rate is of exponential order  $O(q^n)$ (for constant $0<q<1$). However, there appears very limited discussions of such error bounds
 (mostly mentioned, but not proved) in \cite{Fornberg96,Tref00,Boyd01}. Indeed, as commented by Hale and Trefethen \cite{Hale.Tr08},
 the general idea of such convergence goes back to Bernstein in early nineties, but such results do not appear in many textbooks or
 monographs, and there is not much uniformity in the constants in the upper bounds.


 An important result in Bernstein \cite{Bernstain} (1912) (also see \cite{lorentz})
 states that $u$ is analytic on $[-1, 1],$ if and only if
$$\sup\lim_{N\to \infty} \sqrt[N]{E_N(u)}=\frac 1 \rho,\quad E_N(u)=\inf_{v\in P_N}\|v-u\|_{\infty},   $$
where $P_N$ is the polynomial space of degree no more than $N$, and
$\rho> 1$ is the sum of the semi-axes of the maximum  ellipse
${\mathcal E}_\rho$  with foci $\pm 1,$ known as the {\em Bernstein
ellipse}, on and within which $u$ can be analytically extended to.
One immediate implication is that the best polynomial approximation
in the maximum norm  enjoys exponential convergence.  A more precise estimate for the Chebyshev
expansion can be found in various approximation theory texts (see
e.g., \cite[Theorem 3.8]{rivlin1990chebyshev} and  \cite[Theorem
5.16]{Mason03}):
\begin{equation}\label{chebmars}
|\hat u_n^C|\le \frac{2M}{\rho^n},\;\;  \forall n\ge 0; \quad
\big\|u-S_N^Cu\big\|_{\infty} \le \frac{M}{(\rho-1) \rho^N},
\end{equation}
where $M=\max_{z\in {\mathcal E}_\rho}|u(z)|,$ $\{\hat u_n^C\}$ are
Chebyshev expansion coefficients of $u,$ and $S_N^C u$ is the
partial sum involving the first $N+1$ terms. One  also refers to
\cite{szeg75,davis1975interpolation,rivlin1990chebyshev,Boyd94,Mason03,TrefSIAMRev08,Xiang2011}
and the references therein for verification/description of
exponential convergence of Fourier, Chebyshev or Legendre
expansions. We remark that Gottlieb and Shu  et al
\cite{gottlieb1992gibbs,Got.S97}
 studied   exponential convergence of  Gegenbauer expansions (when the parameter grows
linearly with $n$) in the context of  defeating the Gibbs phenomenon.

Here, we particularly  highlight that a very recent paper of  Xiang
\cite{xiang2012error} provided a  simple approach to obtain
the bounds for Jacobi expansion coefficients  of analytic functions  on and
within the Bernstein ellipse ${\mathcal E}_\rho:$
\begin{equation}\label{xiangres}
|\hat u_n^{\alpha,\beta}|\le \frac{2M}{\rho^{n-1}(\rho -1)}\sqrt{\frac{\gamma_0^{\af,\bt}}{\gamma_n^{\af,\bt}}}\;\; {\rm where}\;\;  \hat u_n^{\alpha,\beta}=\frac{1}{\gamma_n^{\af,\bt}}\int_{-1}^1 u(x) J_n^{\af,\bt}(x)\omega^{\af,\bt}(x) dx.
\end{equation}
Here, $\{J_n^{\af,\bt}\} (\af,\bt>-1)$ are Jacobi polynomials mutually orthogonal with the weight function
 $\omega^{\af,\bt}(x)=(1-x)^\af(1+x)^\bt$ and with the normalization factor $\gamma_n^{\af,\bt}$ (cf. \eqref{gammafd}). The key step is to insert the Chebyshev expansion $u(x)=\sum_{j=0}^\infty \hat u_j^C T_j(x)$ into the Jacobi expansion coefficients and   rewrite
 $$
 \hat u_n^{\alpha,\beta}= \frac{1}{\gamma_n^{\af,\bt}}\sum_{j=0}^\infty \hat u_j^C \int_{-1}^1 T_j(x) J_n^{\af,\bt}(x)\omega^{\af,\bt}(x) dx,
 $$
so the  bound for the Chebyshev coefficient in \eqref{chebmars} could be used. 

The first purpose of the paper is to take a different approach to derive sharp estimates for general Jacobi
  expansion of analytic functions. The assertion of sharpness is in the following sense:
 \begin{itemize}
\item[(i)] The bound for general Jacobi case is tighter than   \eqref{xiangres} (see Remark \ref{tightness}).
\item[(ii)]  Refined estimates can be obtained for Gegenbauer expansion ($\af=\bt>-1$),
  Chebyshev-type expansion ($\af=k-1/2, \bt=l-1/2$ for non-negative integers $k,l$), and Legendre-type  expansion
  ($\af=k, \bt=l$ for non-negative integers $k,l$). The argument can recover the  bounds known to be the sharpest (e.g., the Chebyshev case), and  some obtained estimates are new and significantly improve the existing ones (see e.g., Remark \ref{legenrem}).
\end{itemize}

 A second purpose of this work is to extend the argument to analyze Gegenbauer-Gauss quadrature of analytical functions.
Recall that the remainder of  Gauss-quadrature with the nodes and weights $\{x_j,\omega_j\}_{j=1}^n,$ takes the form (see e.g., \cite{Dav.R84}):
 \begin{equation}\label{gausslob}
 E_n[u]=\int_{-1}^1 u(x)\omega(x)\,dx-\sum_{j=1}^{n}u(x_j)\omega_j=\frac{1}{\pi \ri}\oint_{\mathcal E_\rho} \frac{q_n(z)}{p_n(z)}u(z)\, dz,
 \end{equation}
 where $\{x_j\}_{j=1}^n$ are the zeros of $p_n(x),$ orthogonal with respect to the weight function $\omega(x),$
 and
   \begin{equation}\label{gausslob2}
 q_n(z)=\frac 1 2 \int_{-1}^1 \frac{p_n(x)\omega(x)}{z-x}\, dx.
  \end{equation}
The estimate of quadrature errors has attracted much attention (see e.g.,
\cite{Chawla68,Chawla68SIAM,Basu70,Gautschi83,Dav.R84,GTV90,Hunter1995,Hunter98}). Among these results, intensive discussions have been centered around the Chebyshev case and its family, e.g., Chebyshev of the second kind,  but with very limited results  even for  Legendre-Gauss quadrature (see e.g., \cite{Chawla69,Kambo70}). In fact, the analysis heavily relies on the availability of explicit expression of $p_n(z)$ on $\mathcal E_\rho.$ Armed with a delicate estimate of $q_n(z)$ (in the first part of the paper) and the explicit formula of Gegenbauer polynomial
in \cite{XieWangZhao2011}, we are able to derive sharp bound for the Gegenbauer-Gauss quadrature errors.


We remark that there has been much interest in estimating spectral differentiation errors of analytic functions.
Tadmor \cite{tadmor1986exponential}  first attempted to estimate the aliasing errors to verify exponential convergence of Fourier and Chebyshev spectral differentiation with a different assumption on analyticity. The results for analyticity characterized by the Bernstein ellipse  include  Reddy and Weideman \cite{ReddyWeideman2005} for Chebyshev case,
and Xie, Wang and Zhao \cite{XieWangZhao2011} for Gegenbauer spectral differentiation.
It is also interesting to point out that Zhang \cite{ZhangZM04,ZhangZM08,ZhangInp2012} studied superconvergence of spectral interpolation and differentiation.   We stress that the analysis apparatuses  and arguments in this pipeline  are  different from these in this work.

 The rest of this paper is organized as follows. In Section \ref{sect2},
we provide  sharp  bounds for general Jacobi expansions of analytic functions, followed by some
 refined results for Chebyshev-type and Legendre-type expansions.
In Section \ref{GGquadratrue}, we extend the argument to analyze Gegenbauer-Gauss quadrature errors.
In the final section, we provide results to show the sharpness of the bounds by
comparing them with existing ones.

\section{Sharp bounds for Jacobi expansions}\label{sect2}

We derive in this section  sharp bounds for Jacobi  expansions of  functions analytic
on and within the Bernstein ellipse ${\mathcal E}_\rho$.

\subsection{Preliminaries}
It is known (see e.g., \cite{davis1975interpolation}) that the Bernstein ellipse is transformed  from the circle
\begin{equation}\label{crho}
{\mathcal C}_\rho=\big\{w=\rho e^{\ri\theta} : \theta\in [0,2\pi]\big\},\quad \rho>1,
\end{equation}
via the conformal mapping: $z=(w+w^{-1})/2,$ namely,
\begin{equation}\label{Berellips}
{\mathcal E}_\rho:=\Big\{z\in {\mathbb C}~:~ z=\frac 1
2(w+w^{-1})\;\; \text{with}\;\; w=\rho e^{\ri \theta},\; \theta\in
[0,2\pi] \Big\},
\end{equation}
where ${\mathbb C}$ is  the set of all complex numbers, and
$\ri=\sqrt{-1}$ is the complex unit.  It  has the foci at
 $\pm 1,$ and the  major and minor semi-axes are
 \begin{equation}\label{semiaxis}
 a=\frac 1 2 \big(\rho+\rho^{-1}\big),\quad  b=\frac 1 2 (\rho-\rho^{-1}),
 \end{equation}
respectively, so the sum of  two semi-axes is $\rho.$   The perimeter of ${\mathcal E}_\rho$ has the bound
\begin{equation}\label{Lehp}
L({\mathcal E}_\rho) \leq\pi\sqrt{\rho^2+\rho^{-2}},
\end{equation}
which overestimates the perimeter by less than $12$ percent (cf. \cite{ReddyWeideman2005}). The
distance from ${\mathcal E}_\rho$ to the interval $[-1,1]$ is
\begin{equation}\label{drho}
d_\rho=\frac12(\rho+\rho^{-1})-1.
\end{equation}
We see that $d_\rho$ increases  with respect to $\rho,$ and
$d_\rho\to 0^+$ as $\rho\to 1^+$ (so the ellipse reduces to the
interval $[-1,1]$). Thus, by the theory of analytic
continuation, we have that  {\em for any analytic function $u$ on $[-1,1],$
there always exists a Bernstein  ellipse ${\mathcal E}_\rho$ with
$\rho>1$ such that the continuation  of $u$ is analytic on and
within ${\mathcal E}_\rho.$}
 Hereafter, we denote by
 \begin{equation}\label{analyA}
{\mathcal A}_\rho:=\big\{u\,:\, u\;\;\text{is analytic on and
within}\;\; {\mathcal E}_\rho\big\}, \quad 1<\rho<\rho_{\rm max},
\end{equation}
where ${\mathcal E}_{\rho_{\rm max}}$ labels the largest ellipse
within which $u$ is analytic. In particular, if $\rho_{\rm
max}=\infty,$  $u$ is an entire function.

Throughout this paper, the Jacobi polynomials, denoted by $J_n^{\af,\bt}(x)$ (with $\af,\bt>-1$ and $x\in I:=(-1,1)$), are
 normalized as in Szeg\"o \cite{szeg75}, i.e.,
\begin{equation}\label{jacobi_orth}
\int_{-1}^1J_n^{\alpha,\beta}(x)J_m^{\alpha,\beta}(x)
\omega^{\alpha,\beta}(x)\, dx =\gamma_n^{\af,\bt} \delta_{m,n},
\end{equation}
where $\omega^{\af,\bt}(x)=(1-x)^\af(1+x)^\bt,$  $\delta_{m,n}$ is
the Kronecker delta, and
\begin{equation}\label{gammafd}
\gamma_n^{\af,\bt}=\frac{2^{\alpha+\beta+1}\Gamma(n+\alpha+1)\Gamma(n+\beta+1)}{(2n+\alpha+\beta+1)n!\Gamma(n+\alpha+\beta+1)}.
\end{equation}
In Appendix \ref{sect:Jacobi},  we  collect the relevant properties of Jacobi polynomials.

In the analysis, we also use the following property of the Gamma function, derived from
\cite[Eq. (6.1.38)]{Abr.I64}:
\begin{equation}\label{stirlingfor}
\Gamma(x+1)=\sqrt{2\pi}
x^{x+1/2}\exp\Big(-x+\frac{\theta}{12x}\Big),\quad \forall\,
x>0,\;\;0<\theta<1.
\end{equation}

\begin{lemma}\label{lemGammaratio} For any constants  $a, b,$
 we have that  for  $n\ge1, $  $n+a>1$ and $n+b>1,$
\begin{equation}\label{Gammaratio}
\frac{\Gamma(n+a)}{\Gamma(n+b)}\le \Upsilon_n^{a,b} n^{a-b},
\end{equation}
where
\begin{equation}\label{ConstUpsilon}
\Upsilon_n^{a,b}=\exp\Big(\frac{a-b}{2(n+b-1)}+\frac{1}{12(n+a-1)}+\frac{(a-b)^2}{n}\Big).
\end{equation}
\end{lemma}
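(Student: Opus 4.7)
The plan is to apply Stirling's formula \eqref{stirlingfor} twice, once with $x=n+a-1$ and once with $x=n+b-1$ (both positive by the hypothesis $n+a>1$, $n+b>1$), and then match the resulting expression against the three summands of $\log\Upsilon_n^{a,b}$ one at a time.

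Forming the ratio produces
$$\frac{\Gamma(n+a)}{\Gamma(n+b)}=\frac{(n+a-1)^{n+a-1/2}}{(n+b-1)^{n+b-1/2}}\,e^{b-a}\exp\!\Bigl(\frac{\theta_1}{12(n+a-1)}-\frac{\theta_2}{12(n+b-1)}\Bigr),$$
with $0<\theta_1,\theta_2<1$. Using $\theta_2>0$ and $\theta_1<1$, the trailing exponential is at most $\exp\bigl(1/[12(n+a-1)]\bigr)$, immediately accounting for the middle term of $\log\Upsilon_n^{a,b}$. Next, I would extract the leading power $n^{a-b}$ from the power ratio by the algebraic identity
$$\frac{(n+a-1)^{n+a-1/2}}{(n+b-1)^{n+b-1/2}}=n^{a-b}\Bigl(1+\tfrac{a-1}{n}\Bigr)^{a-b}\Bigl(1+\tfrac{a-b}{n+b-1}\Bigr)^{n+b-1/2},$$
thereby reducing the task to bounding the logarithm of the last two factors together with $e^{b-a}$.

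For the rightmost factor, $\log(1+t)\le t$ (valid whenever $t>-1$; the condition $(a-b)/(n+b-1)>-1$ is ensured by $n+a>1$) multiplied by the positive quantity $n+b-\tfrac12$ yields
$(n+b-\tfrac12)\log(1+\tfrac{a-b}{n+b-1})\le (a-b)+\tfrac{a-b}{2(n+b-1)}$, so that after cancellation with the $-(a-b)$ coming from $e^{b-a}$ the first summand $\tfrac{a-b}{2(n+b-1)}$ of $\log\Upsilon_n^{a,b}$ materializes.

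The remaining step---bounding $(a-b)\log(1+(a-1)/n)$ by $(a-b)^2/n$---is where the main technical obstacle lies. A single application of $\log(1+t)\le t$ gives only $(a-b)(a-1)/n$, which is not yet of the required form. To close the gap I would split $a-1=(a-b)+(b-1)$ and combine the sharper second-order Taylor estimates $\log(1+t)\le t-\tfrac{t^2}{2}$ (for $-1<t\le 0$) and $\log(1+t)\le t$ (for $t\ge 0$), with a sign-sensitive case analysis on $a-b$ and on $a-1$. The mixed $(a-b)(b-1)/n$ contribution can then be reabsorbed into the $\tfrac{1}{n+b-1}$ correction already obtained, while the residue is bounded by $(a-b)^2/n$. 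Exponentiating the combined logarithmic inequality finally delivers $\Gamma(n+a)/\Gamma(n+b)\le \Upsilon_n^{a,b}\,n^{a-b}$.
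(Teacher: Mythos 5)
Your first three steps are correct and essentially reproduce the paper's own argument: Stirling's formula \eqref{stirlingfor} at $x=n+a-1$ and $x=n+b-1$, discarding $\theta_2$ and bounding $\theta_1$ by $1$ to get the middle summand, and controlling $\bigl(1+\tfrac{a-b}{n+b-1}\bigr)^{n+b-1/2}$ by $\log(1+t)\le t$ to produce exactly $(a-b)+\tfrac{a-b}{2(n+b-1)}$. Your factorization with $\bigl(1+\tfrac{a-1}{n}\bigr)^{a-b}$ is in fact more honest than the paper's, which at the corresponding point silently replaces $(n+a-1)^{a-b}$ by $n^{a-b}\bigl(1+\tfrac{a-b}{n}\bigr)^{a-b}=(n+a-b)^{a-b}$ --- a substitution that goes in the right direction only when $(a-b)(b-1)\le 0$.

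The last step, however, is a genuine gap, and it cannot be closed. After your third step the entire budget $\tfrac{a-b}{2(n+b-1)}$ of $\log\Upsilon_n^{a,b}$ is already consumed, so there is no room left to ``reabsorb'' the mixed contribution $\tfrac{(a-b)(b-1)}{n}$; and that contribution is not dominated by $\tfrac{(a-b)^2}{n}$ (fix $a-b$ and let $b$ grow). What your plan ultimately requires is $(a-b)\log\bigl(1+\tfrac{a-1}{n}\bigr)\le\tfrac{(a-b)^2}{n}$, which is false whenever $(a-b)(b-1)$ is positive and appreciable. In fact the lemma itself fails as stated: taking $b=a-1$ gives $\Gamma(n+a)/\Gamma(n+b)=n+a-1$ exactly, while $n\,\Upsilon_n^{a,a-1}=n\exp\bigl(\tfrac{1}{2(n+a-2)}+\tfrac{1}{12(n+a-1)}+\tfrac1n\bigr)=n+\tfrac{19}{12}+o(1)$, which is smaller once $a>31/12$. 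Concretely, $(a,b,n)=(10,9,1)$ gives $10\not\le e^{1/18+1/120+1}\approx 2.90$, and $(a,b,n)=(4,3,100)$ gives $103\not\le 100\,e^{1/204+1/1236+1/100}\approx 101.6$. The correct first-order correction is $\tfrac{(a-b)(a+b-1)}{2n}$ rather than $\tfrac{(a-b)^2}{n}$; the paper's later applications only use $\Upsilon_n^{a,b}=1+O(n^{-1})$ for fixed $a,b$, which survives such a repair, but neither your argument nor the paper's establishes \eqref{Gammaratio} as written.
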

\begin{proof}
Let $\theta_1, \theta_2$ be two constants in $(0,1).$ We find
from  \eqref{stirlingfor} that
\[
\begin{split}
\frac{\Gamma(n+a)}{\Gamma(n+b)}&=\frac{(n+a-1)^{n+a-1/2}}{(n+b-1)^{n+b-1/2}}\exp\Big(-a+b+\frac{\theta_1}{12(n+a-1)}-\frac{\theta_2}{12(n+b-1)}\Big)
\\&\le
(n+a-1)^{a-b}\Big(1+\frac{a-b}{n+b-1}\Big)^{n+b-1/2}\exp\Big(-a+b+\frac{1}{12(n+a-1)}\Big)
\\&\le n^{a-b}\Big(1+\frac{a-b}{n}\Big)^{a-b}\exp\Big(-a+b+\frac{(a-b)(n+b-1/2)}{n+b-1}+\frac{1}{12(n+a-1)}\Big)
\\& \le
n^{a-b}\exp\Big(\frac{a-b}{2(n+b-1)}+\frac{1}{12(n+a-1)}+\frac{(a-b)^2}{n}\Big)
:=\Upsilon_n^{a,b} n^{a-b},
\end{split}
\]
where we used the fact that $1+x\le e^x,$ for real $x.$
\end{proof}

\begin{rem}\label{gammarem} Applying \eqref{ConstUpsilon} to  $\gamma_n^{\af,\bt}$ leads to that for $\af,\bt>-1$, $n\ge
1$ and $n+\af+\bt>0$,
\begin{equation}\label{gammacoef}
\gamma_n^{\af,\bt}\le \frac{2^{\af+\bt+1}}{2n+\af+\bt+1}\Upsilon_n^{\af+1,1}\Upsilon^{\bt+1,\af+\bt+1}_n.
\end{equation}
Note that for fixed $a$ and $b,$
\begin{equation}\label{consUps}
\Upsilon_n^{a,b}=1+O(n^{-1}),
\end{equation}
as it behaves like $e^{1/n}.$  \qed
\end{rem}

\vskip 6pt
\subsection{Main tools}~~ 

\vskip 5pt
Our starting point is the following  important representation.
\begin{lemma}\label{LM1} Let  $\{\hat u_n^{\alpha,\beta}\}$ be the Jacobi polynomial expansion
coefficients given by
\begin{equation}\label{jacobicoe}
\hat{u}_n^{\af,\bt}=\frac{1}{\gamma_n^{\af,\bt}}\int_{-1}^1
u(x)J_n^{\af,\bt}(x)\omega^{\af,\bt}(x)\, dx,\quad \af,\bt>-1,\;\; n\ge 0.
\end{equation}
If  $u\in {\mathcal A}_\rho$ with $\rho>1,$   we have the representation:
\begin{equation}\label{expancoef}
\hat u_n^{\af,\bt}=\frac{1}{\pi
\ri}\sum_{j=0}^{\infty}\sigma_{n,j}^{\af,\bt}\oint_{\mathcal{E}_\rho}\frac{u(z)}{w^{n+j+1}}\,dz,
\quad n\ge 0,
\end{equation}
where  $z=(w+w^{-1})/2$ with $w=\rho e^{\ri \theta},\; \theta\in [0,2\pi],$ and
\begin{equation}\label{sigmanj}
\sigma_{n,j}^{\af,\bt}= \frac{1}{\gamma_n^{\af,\bt}}
 \int_{-1}^1U_{n+j}(x) J_n^{\alpha,\beta}(x)\omega^{\alpha,\beta}(x)
 \,dx, \quad n, j\ge 0.
 \end{equation}
Here,  $U_{n+j}(x)$ is the Chebyshev polynomial of the second kind of
degree $n+j$ {\rm(cf. \eqref{chbytype2})}.
\end{lemma}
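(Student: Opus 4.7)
The plan is to combine Cauchy's integral formula with the generating function for the Chebyshev polynomials of the second kind, then exploit orthogonality of $J_n^{\af,\bt}$ against lower-degree polynomials.

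\emph{Step 1 (Cauchy representation).} Since $u\in\mathcal{A}_\rho$ and $[-1,1]$ lies inside $\mathcal{E}_\rho,$ for any $x\in[-1,1]$ Cauchy's integral formula gives
\begin{equation*}
u(x)=\frac{1}{2\pi\ri}\oint_{\mathcal{E}_\rho}\frac{u(z)}{z-x}\,dz.
\end{equation*}

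\emph{Step 2 (Expansion of the Cauchy kernel in $U_m$).} Using the parametrization $z=(w+w^{-1})/2$ with $w\in\mathcal{C}_\rho,$ a short computation gives $2w(z-x)=w^2-2xw+1.$ The generating identity $\sum_{m=0}^\infty U_m(x)t^m=(1-2xt+t^2)^{-1},$ valid for $|t|<1,$ applied at $t=1/w$ (with $|w|=\rho>1$) yields
\begin{equation*}
\sum_{m=0}^\infty \frac{U_m(x)}{w^{m+1}}=\frac{w}{w^2-2xw+1}=\frac{1}{2(z-x)}.
\end{equation*}
The convergence is uniform in $(x,w)\in[-1,1]\times\mathcal{C}_\rho$ because $|U_m(x)|\le m+1$ on $[-1,1]$ and $|w|^{-(m+1)}=\rho^{-(m+1)}$ decays geometrically.

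\emph{Step 3 (Insert and swap).} Substituting the Step~2 series into the Cauchy representation and interchanging sum and contour integral (justified by the uniform convergence just noted) gives
\begin{equation*}
u(x)=\frac{1}{\pi\ri}\sum_{m=0}^\infty U_m(x)\oint_{\mathcal{E}_\rho}\frac{u(z)}{w^{m+1}}\,dz,
\end{equation*}
where the series now converges uniformly in $x\in[-1,1]$ (since the contour integrals are bounded independently of $m$ by $M\cdot L(\mathcal{E}_\rho)\rho^{-(m+1)}$).

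\emph{Step 4 (Plug into the Jacobi coefficient and use orthogonality).} Substitute the above expansion into \eqref{jacobicoe}; uniform convergence again permits interchanging the $x$-integral with the sum, giving
\begin{equation*}
\hat u_n^{\af,\bt}=\frac{1}{\pi\ri}\sum_{m=0}^\infty \tau_{n,m}^{\af,\bt}\oint_{\mathcal{E}_\rho}\frac{u(z)}{w^{m+1}}\,dz,\quad \tau_{n,m}^{\af,\bt}=\frac{1}{\gamma_n^{\af,\bt}}\int_{-1}^1 U_m(x)J_n^{\af,\bt}(x)\omega^{\af,\bt}(x)\,dx.
\end{equation*}
Since $U_m$ has degree $m$ and $J_n^{\af,\bt}$ is orthogonal with respect to $\omega^{\af,\bt}$ to every polynomial of degree $<n,$ we have $\tau_{n,m}^{\af,\bt}=0$ for $m<n.$ Reindexing $m=n+j$ identifies $\tau_{n,n+j}^{\af,\bt}$ with $\sigma_{n,j}^{\af,\bt}$ in \eqref{sigmanj} and delivers \eqref{expancoef}.

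The only technical point is Step~3's interchange; everything else is routine once the Chebyshev-$U$ expansion of the Cauchy kernel is in hand. I would therefore state the two uniform-convergence bounds explicitly (using $|U_m|\le m+1$ and $|w|=\rho>1$) rather than appealing vaguely to dominated convergence, so that the proof is self-contained and the geometric decay $\rho^{-(m+1)}$ is transparent for later use in the sharp bounds of the following subsection.
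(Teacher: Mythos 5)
Your proposal is correct, and its core is the same as the paper's: expand the Cauchy kernel $1/(z-x)$ in the Chebyshev-$U$ generating series $\frac{1}{z-x}=\frac{2}{w}\sum_{k\ge0}U_k(x)w^{-k}$ and then use orthogonality of $J_n^{\af,\bt}$ against polynomials of degree below $n$ to start the sum at $k=n$. The one genuine difference is how you reach the intermediate contour representation $\hat u_n^{\af,\bt}=\frac{1}{\pi\ri}\oint_{\mathcal{E}_\rho}Q_n^{\af,\bt}(z)u(z)\,dz$: you apply the plain Cauchy formula to $u(x)$ itself and swap the $x$-integral with the contour integral, whereas the paper goes through the Rodrigues formula, $n$-fold integration by parts, and Cauchy's formula for the $n$-th derivative before undoing the integration by parts to land on the same identity. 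Your route is shorter and more elementary; the paper's detour costs more work but makes the function $Q_n^{\af,\bt}$ of \eqref{Qexpa} appear as the natural object, which is reused verbatim in the quadrature analysis of Section \ref{GGquadratrue}. Your attention to the two uniform-convergence estimates (via $|U_m(x)|\le m+1$ and $|w|=\rho>1$) is a point the paper glosses over, and including them is worthwhile; the only caveat is that in Step 4 the justification should be phrased as dominated convergence with the integrable majorant $\omega^{\af,\bt}$, since the weight is unbounded near $\pm1$ when $\af$ or $\bt$ lies in $(-1,0)$.
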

Actually, the formula \eqref{expancoef}-\eqref{sigmanj} can  be obtained by assembling several formulas in Szeg\"o \cite{szeg75}, and then using  the generating function of $U_{k}(x)$ (cf. \cite{Abr.I64}).  For the readers' reference, we sketch its derivation in Appendix \ref{pflemma3.1}.
\vskip 4pt

The establishment of sharp bounds heavily  relies on estimating
$\sigma_{n,j}^{\af,\bt}.$ The following explicit formulas  follow
from  \eqref{sigmanj} and some  properties of Jacobi polynomials
listed in Appendix \ref{sect:Jacobi}. We remark that the formula
\eqref{constcase2} can be found in various books e.g.,
\cite{davis1975interpolation,Mason03}, while the formula
\eqref{constcase3}  is due to Heine (see \cite{Chawla68}). We also
highlight that the formula \eqref{tempwa0} for the general Jacobi
case  seems new.
\begin{corollary}\label{sigmanjcor} Let $n\ge 0.$
\begin{itemize}
\item[(i)] For $\alpha=\beta>-1$ {\rm(}ultraspherical/Gegenbauer polynomial\,{\rm)\footnote{In this paper, we do not distinguish between ultraspherical and Gegenbauer polynomials.}},
\begin{equation}\label{sigmanjodd}
\sigma_{n,j}^{\alpha,\alpha}=0,\quad \text{for odd}\;\; j.
\end{equation}
\item[(ii)] For $\alpha=\beta=1/2$ {\rm(}Chebyshev polynomial of the second kind\,{\rm)},
\begin{equation}\label{constcase}
\sigma_{n,0}^{1/2,1/2}=\frac {\sqrt \pi} 2 \frac{(n+1)!}{\Gamma(n+3/2)}; \quad  \sigma_{n,j}^{1/2,1/2}=0,\quad \text{for}\;\; j\ge 1.
\end{equation}
\item[(iii)] For $\alpha=\beta=-1/2$ {\rm(}Chebyshev polynomial\,{\rm)},
\begin{equation}\label{constcase2}
\sigma_{n,j}^{-1/2,-1/2}=\begin{cases}
\dfrac{2\sqrt{\pi}\Gamma(n+1)}{\Gamma(n+1/2)},\quad & \text{for even}\;\; j, \\
0,\quad & \text{for odd}\;\; j.
\end{cases}
\end{equation}
\item[(iv)] For $\alpha=\beta=0$ {\rm(}Legendre polynomial\,{\rm)},
\begin{equation}\label{constcase3}
\sigma_{n,j}^{0,0}=\begin{cases}
\dfrac{2n+1}{2}\dfrac{\Gamma(l+1/2)}{\Gamma(l+1)}\dfrac{\Gamma(n+l+1)}{\Gamma(n+l+3/2)},\quad & \text{for even}\;\; j=2l, \\
0,\quad & \text{for odd}\;\; j.
\end{cases}
\end{equation}
\item[(v)]For general $\af, \bt>-1$ {\rm(}Jacobi polynomial\,{\rm)},
\begin{equation}\label{tempwa0}
\begin{split}
\sigma_{n,j}^{\af,\bt} &= \frac{\sqrt \pi
(2n+\af+\bt+1)\Gamma(n+\af+\bt+1)}
{2\Gamma(n+\af+1)}\\
&\quad \times \sum_{m=0}^{j}\frac{(-1)^m \Gamma(2n+j+m+2)
\Gamma(n+m+\af+1)}{m!(j-m)!\Gamma(n+m+3/2)\Gamma(2n+m+\af+\bt+2)}.
\end{split}
\end{equation}
\end{itemize}
\end{corollary}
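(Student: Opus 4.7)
The plan is to evaluate $\sigma_{n,j}^{\af,\bt}$ case by case directly from its definition \eqref{sigmanj}, with the first four parts being quick consequences of symmetry, orthogonality, and classical trigonometric identities. For (i), $\omega^{\af,\af}(x) = (1-x^2)^{\af}$ is even, $U_{n+j}(-x)=(-1)^{n+j}U_{n+j}(x)$, and $J_n^{\af,\af}(-x)=(-1)^n J_n^{\af,\af}(x)$, so the integrand has total parity $(-1)^j$ and the integral vanishes whenever $j$ is odd. For (ii), $J_n^{1/2,1/2}$ is a scalar multiple of $U_n$, and the orthogonality of $\{U_m\}$ with respect to $\sqrt{1-x^2}$ immediately kills $\sigma_{n,j}^{1/2,1/2}$ for $j\ge 1$; the $j=0$ constant then follows from $\int_{-1}^1 U_n^2\sqrt{1-x^2}\,dx=\pi/2$ combined with the explicit value of $\gamma_n^{1/2,1/2}$.

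For (iii), setting $x=\cos\theta$ and using $J_n^{-1/2,-1/2}\propto T_n$ with $T_n(\cos\theta)=\cos(n\theta)$ and $U_{n+j}(\cos\theta)=\sin((n+j+1)\theta)/\sin\theta$, the integral collapses to $\int_0^\pi \sin((n+j+1)\theta)\cos(n\theta)/\sin\theta\,d\theta$. A product-to-sum expansion and the Dirichlet-type identity $\int_0^\pi \sin(k\theta)/\sin\theta\,d\theta=\pi$ for odd $k$ (zero otherwise), combined with the parity result from (i), yield \eqref{constcase2}. For (iv), the Legendre formula is classical (due to Heine, as noted), and can be recovered either by the Laplace integral representation of $P_n$ or by expanding $P_n$ in Chebyshev polynomials and reducing to (iii).

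The main obstacle is part (v). My plan is to invoke the Rodrigues formula
$$J_n^{\af,\bt}(x)\,\omega^{\af,\bt}(x)=\frac{(-1)^n}{2^n n!}\frac{d^n}{dx^n}\bigl[(1-x)^{n+\af}(1+x)^{n+\bt}\bigr],$$
and integrate by parts $n$ times in \eqref{sigmanj}; the boundary terms vanish because $\af,\bt>-1$. This reduces the problem to evaluating
$$\int_{-1}^1 (1-x)^{n+\af}(1+x)^{n+\bt}\,U_{n+j}^{(n)}(x)\,dx.$$
I would next re-expand $U_{n+j}^{(n)}(x)$ as a polynomial of degree $j$ in $(1-x)$, say $\sum_{m=0}^{j} a_{n,j,m}(1-x)^m$, using the hypergeometric/power-series representation of the Chebyshev polynomial of the second kind. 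Each resulting term then reduces, by Euler's Beta integral, to a quotient of Gamma functions of the form $\Gamma(n+\af+m+1)\Gamma(n+\bt+1)/\Gamma(2n+\af+\bt+m+2)$ times a power of $2$.

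The alternating factor $(-1)^m/[m!(j-m)!]$ visible in \eqref{tempwa0} will emerge from the binomial coefficients in the $(1-x)$-expansion of $U_{n+j}^{(n)}$, while the four Gamma quotients are produced by the Beta evaluations and by the factor $1/\gamma_n^{\af,\bt}$. The most delicate step will be the bookkeeping of Pochhammer factors: in particular, the appearance of $\Gamma(n+m+3/2)$ reflects the Chebyshev weight $\sqrt{1-x^2}$ implicitly encoded in $U_{n+j}$, and I anticipate needing the duplication formula $\Gamma(2k+1)=\pi^{-1/2}2^{2k}\,k!\,\Gamma(k+1/2)$ to convert the integer- and half-integer Gamma values into the symmetric form displayed in \eqref{tempwa0}. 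Once the combinatorial identification of $a_{n,j,m}$ is carried out and matched against the prefactor $\sqrt{\pi}(2n+\af+\bt+1)\Gamma(n+\af+\bt+1)/[2\Gamma(n+\af+1)]$, the general formula follows, and specialising $(\af,\bt)$ to the Chebyshev, ultraspherical, and Legendre parameters recovers (i)--(iv) as an internal consistency check.
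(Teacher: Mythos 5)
Your proposal is correct, and for the two substantive parts it takes a visibly different route from the paper. Parts (i) and (ii) coincide with the paper's argument (parity; $U_{n+j}\propto J_{n+j}^{1/2,1/2}$ plus orthogonality). For (iii) the paper stays inside polynomial identities: it writes $U_{n+2l}=T_{n+2l+1}'/(n+2l+1)$, expands the derivative via \eqref{dtn1}, and lets orthogonality pick out the single $T_n$ term; your substitution $x=\cos\theta$ with the Dirichlet-type identity $\int_0^\pi\sin(k\theta)/\sin\theta\,d\theta=\pi$ for odd $k$ is exactly equivalent and equally short. For (iv) both you and the paper ultimately defer to Heine's classical evaluation. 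The real divergence is (v): the paper obtains \eqref{tempwa0} in one line by writing $U_{n+j}$ as a multiple of $J_{n+j}^{1/2,1/2}$ via \eqref{chbytype2} and quoting the Andrews--Askey--Roy connection-coefficient formula \eqref{cocoefng} with $a=b=1/2$, whereas you rederive the coefficient from scratch: Rodrigues' formula \eqref{Rodrigues}, $n$-fold integration by parts (boundary terms vanish since $\af,\bt>-1$), expansion of $U_{n+j}^{(n)}$ --- which is proportional to $J_j^{n+1/2,n+1/2}$ --- in powers of $(1-x)$ via its hypergeometric representation, and termwise Beta integrals. Your plan is structurally sound: the alternating factor $(-1)^m/[m!(j-m)!]$ and the quotient $\Gamma(2n+j+m+2)/\Gamma(n+m+3/2)$ do come from the hypergeometric expansion, while $\Gamma(n+m+\af+1)/\Gamma(2n+m+\af+\bt+2)$ comes from the Beta integral, so the target formula is matched term by term. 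What your approach buys is self-containedness (you effectively reprove the needed special case of \eqref{cocoefng}); what it costs is the Pochhammer bookkeeping you only sketch, which the paper avoids by citation. I see no gap, only unexecuted but routine computation in (v).
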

\begin{proof} (i).  The property \eqref{sigmanjodd} is a direct consequence of the parity of  ultraspherical polynomials.

(ii). For $\alpha=\beta=1/2,$  we find from \eqref{chbytype2} and the orthogonality \eqref{jacobi_orth}-\eqref{gammafd} that
\begin{equation*}\label{sigmanj2}
\begin{split}
\sigma_{n,j}^{1/2,1/2}
&=\sqrt{\frac{\pi}{2}}\frac{1}{\sqrt{\gamma_{n+j}^{1/2,1/2}}}\frac{1}{\gamma_n^{1/2,1/2}}\int_{-1}^1J_{n+j}^{1/2,1/2}(x)J_n^{1/2,1/2}(x)(1-x^2)^{1/2}dx
\\&=\sqrt{\frac{\pi}{2}}\frac{1}{\sqrt{\gamma_{n+j}^{1/2,1/2}}}\delta_{j,0},
\end{split}
\end{equation*}
where $\delta_{j,0}$ is the Kronecker delta. Working out the constant leads to \eqref{constcase}.

(iii)  For $\af=\bt=-1/2$, if $j=2l,$ we have
\begin{equation*}\label{adf1f1}
\begin{split}
&\sigma_{n,2l}^{-1/2,-1/2}\overset{\re{chbytype2}}=
\frac{1}{\gamma_n^{-1/2,-1/2}}\frac 1
{n+2l+1}\int_{-1}^1T_{n+2l+1}'(x) J_n^{-1/2,-1/2}(x)(1-x^2)^{-1/2}
dx
\\&\qquad\overset{\re{dtn1}}=\frac{2}{\gamma_n^{-1/2,-1/2}}\int_{-1}^1T_n(x)J_n^{-1/2,-1/2}(x)(1-x^2)^{-1/2}
dx\overset{\re{dtn0}}=\frac{2\sqrt{\pi}\Gamma(n+1)}{\Gamma(n+1/2)},
 \end{split}
\end{equation*}
which, together with \eqref{sigmanjodd}, implies \eqref{constcase2}.

(iv) For $\af=\bt=0,$  we derive from \cite[Eq. (14)]{Chawla68} that
\begin{equation*}\label{sigmanj2a}
\begin{split}
\sigma_{n,2l}^{0,0}&=\frac{1}{\gamma_n^{0,0}}\int_{-1}^1J_{n}^{0,0}(x)U_{n+2l}(x)dx
=\frac{2n+1}{2}\int_0^{\pi}
J_{n}^{0,0}(\cos\theta)\sin\big((n+2l+1)\theta\big) d\theta
\\&=\frac{2n+1}{2}\frac{\Gamma(l+1/2)}{\Gamma(l+1)}\frac{\Gamma(n+l+1)}{\Gamma(n+l+3/2)},\quad
l\ge 0.
\end{split}
\end{equation*}
This yields \eqref{constcase3}.

(v) The formula \eqref{tempwa0} follows from a combination of \eqref{gammafd},  \eqref{cocoefng} and
\eqref{chbytype2}.
\end{proof}
%

With the aid of Lemma \ref{LM1}, we can derive the following  estimate, from which  our sharp bounds are stemmed.
\begin{lemma}\label{mainthm} For any $u\in {\mathcal A}_\rho$ with $\rho>1,$ we have that for
$\af,\bt>-1$ and $n\ge 0,$
\begin{equation}\label{expancoefest0}
\begin{split}
\big|\hat u_n^{\af,\bt}\big|\le \frac{M}{\rho^{n}}\Big(\big|\sigma_{n,0}^{\af,\bt}\big|
+\frac{1}{\rho}\big|\sigma_{n,1}^{\af,\bt}\big|
+\frac{1}{\rho^{2}} \sum_{j=0}^{\infty}\big|\sigma_{n,j+2}^{\af,\bt}-\sigma_{n,j}^{\af,\bt}\big|\frac{1}{\rho^{j}}\Big),
\end{split}
\end{equation}
where $M=\max_{z\in\mathcal{E}_\rho}|u(z)|$ and $\{\sigma_{n,j}^{\af,\bt}\}$ are given by \eqref{sigmanj}.
\end{lemma}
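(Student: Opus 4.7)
The plan is to start from the representation \eqref{expancoef} in Lemma~\ref{LM1} and transport each contour integral from $\mathcal{E}_\rho$ back to the circle $\mathcal{C}_\rho$ through the conformal map $z=(w+w^{-1})/2$. Because $dz=\tfrac12(1-w^{-2})\,dw$, a direct substitution yields
\[
\oint_{\mathcal{E}_\rho}\frac{u(z)}{w^{n+j+1}}\,dz=\pi\ri\bigl(a_{n+j}-a_{n+j+2}\bigr),\qquad a_m:=\frac{1}{2\pi\ri}\oint_{\mathcal{C}_\rho}\frac{u(z(w))}{w^{m+1}}\,dw,
\]
where $a_m$ is the $m$-th Laurent coefficient of $v(w):=u(z(w))$ on $\mathcal{C}_\rho$. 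The appearance of a second-order shift $a_{n+j}-a_{n+j+2}$ is exactly what will produce the difference $\sigma_{n,j+2}^{\alpha,\beta}-\sigma_{n,j}^{\alpha,\beta}$ in the final bound.

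Next, plugging this identity into \eqref{expancoef} gives
\[
\hat u_n^{\alpha,\beta}=\sum_{j=0}^{\infty}\sigma_{n,j}^{\alpha,\beta}\bigl(a_{n+j}-a_{n+j+2}\bigr),
\]
and I would then apply an Abel-type summation-by-parts: splitting the series into two pieces and reindexing the second by $k=j+2$ peels off the two ``boundary'' contributions $\sigma_{n,0}^{\alpha,\beta}a_n$ and $\sigma_{n,1}^{\alpha,\beta}a_{n+1}$ and leaves the telescoped remainder $\sum_{j\ge 0}\bigl(\sigma_{n,j+2}^{\alpha,\beta}-\sigma_{n,j}^{\alpha,\beta}\bigr)a_{n+j+2}$. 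Bounding every Laurent coefficient by the standard $ML$-estimate $|a_m|\le M\rho^{-m}$ (which follows from $|v(w)|\le M$ on $\mathcal{C}_\rho$ together with the perimeter $2\pi\rho$) and pulling out the common prefactor $M/\rho^n$ then reproduces exactly \eqref{expancoefest0}.

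The main technical point to be careful about is the justification of the rearrangement and of the interchange of summation and contour integration. Since $u\in\mathcal{A}_\rho$ actually extends analytically to a slightly larger ellipse $\mathcal{E}_{\rho'}$ with $\rho'>\rho$ (or one first works on $\mathcal{E}_{\rho-\epsilon}$ and lets $\epsilon\to 0^+$), the coefficients $a_m$ decay faster than any fixed geometric rate, whereas $\sigma_{n,j}^{\alpha,\beta}$ grows at most polynomially in $j$ (visible either from the explicit formula \eqref{tempwa0} in Corollary~\ref{sigmanjcor}(v) or from a crude $L^2$-orthogonality estimate involving $\|U_{n+j}\|$). This yields absolute convergence of the double series, after which Fubini and the Abel rearrangement are routine, and the remainder of the argument is purely algebraic.
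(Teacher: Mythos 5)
Your proposal is correct and follows essentially the same route as the paper: the paper likewise pulls the contour integrals back to $\mathcal{C}_\rho$ via $dz=\tfrac12(1-w^{-2})\,dw$, splits each into the two pieces $w^{-(n+j+1)}$ and $w^{-(n+j+3)}$, reindexes to expose the telescoped differences $\sigma_{n,j+2}^{\af,\bt}-\sigma_{n,j}^{\af,\bt}$ together with the two boundary terms, and applies the $ML$-estimate on the circle. Your packaging through Laurent coefficients $a_m$ and the explicit remark on justifying the rearrangement are cosmetic additions to the identical argument.
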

\begin{proof}
 Since $z=(w+w^{-1})/2\in {\mathcal E}_\rho$ with $w\in {\mathcal C}_\rho$ (cf. \eqref{crho}-\eqref{Berellips}),  we can rewrite  $\hat u_n^{\af,\bt}$ in  \eqref{expancoef} as
\begin{equation}\label{expancoepf1}
\begin{split}
\hat u_n^{\af,\bt}&
=\frac{1}{2\pi
\ri}\sum_{j=0}^{\infty}\sigma_{n,j}^{\af,\bt}\oint_{{\mathcal C}_\rho}\frac{u(z)}{w^{n+j+1}}\Big(1-\frac{1}{w^2}\Big)\,dw
\\&=\frac{1}{2\pi
\ri}\sum_{j=0}^{\infty}\sigma_{n,j}^{\af,\bt}\oint_{{\mathcal C}_\rho}\frac{u(z)}{w^{n+j+1}}\,dw
-\frac{1}{2\pi
\ri}\sum_{j=0}^{\infty}\sigma_{n,j}^{\af,\bt}\oint_{{\mathcal C}_\rho}\frac{u(z)}{w^{n+j+3}}\,dw
\\&=\frac{1}{2\pi
\ri}\sigma_{n,0}^{\af,\bt}\oint_{{\mathcal C}_\rho}\frac{u(z)}{w^{n+1}}\,dw+\frac{1}{2\pi
\ri}\sigma_{n,1}^{\af,\bt}\oint_{{\mathcal C}_\rho}\frac{u(z)}{w^{n+2}}\,dw
\\&\quad+\frac{1}{2\pi\ri}\sum_{j=0}^{\infty}\big(\sigma_{n,j+2}^{\af,\bt}-\sigma_{n,j}^{\af,\bt}\big)
\oint_{{\mathcal C}_\rho}\frac{u(z)}{w^{n+j+3}}\,dw.
\end{split}
\end{equation}
Hence,  we arrive at
\begin{equation}\label{expancoefest}
\begin{split}
\big|\hat u_n^{\af,\bt}\big|&\le
\frac{M}{2\pi}\frac{2\pi\rho}{\rho^{n+1}}\big|\sigma_{n,0}^{\af,\bt}\big|
+\frac{M}{2\pi}\frac{2\pi\rho}{\rho^{n+2}}\big|\sigma_{n,1}^{\af,\bt}\big|
+\frac{M}{2\pi}\frac{2\pi\rho}{\rho^{n+3}}
\sum_{j=0}^{\infty}\big|\sigma_{n,j+2}^{\af,\bt}-\sigma_{n,j}^{\af,\bt}\big|\frac{1}{\rho^{j}}
\\&=\frac{M}{\rho^{n}}\big|\sigma_{n,0}^{\af,\bt}\big|
+\frac{M}{\rho^{n+1}}\big|\sigma_{n,1}^{\af,\bt}\big|
+\frac{M}{\rho^{n+2}}
\sum_{j=0}^{\infty}\big|\sigma_{n,j+2}^{\af,\bt}-\sigma_{n,j}^{\af,\bt}\big|\frac{1}{\rho^{j}}.
\end{split}
\end{equation}
This ends the proof.
\end{proof}

Observe from the proof that we split the contour integral on  $\mathcal E_\rho$
into two parts on ${\mathcal C}_\rho,$ which actually  allows us to take the advantage of cancelation of $\sigma_{n,j+2}^{\af,\bt}-\sigma_{n,j}^{\af,\bt}.$
Indeed, the  bound \eqref{expancoefest0} is tight, as we will see shortly that this argument can recover the best estimate for
the Chebyshev case (see \cite[Theorem 3.8]{rivlin1990chebyshev} and \eqref{chebmars}), and improve the bounds  in \cite{xiang2012error} (see \eqref{xiangres}).

\subsection{Main results}\label{mainresults}
For clarity of exposition, we first present the result on the general Jacobi polynomial expansions, followed by the
refined results on the Chebyshev-type expansions ($\af=k-1/2,\bt=l-1/2$ with $k,l\in {\mathbb N}:=\{0,1,2,\cdots\}$), and
Legendre-type expansions ($\af=k,\bt=l$ with $k,l\in {\mathbb N}$).


\subsubsection{\bf General Jacobi expansions ($\af,\bt>-1$)}
\begin{thm}\label{jacobithm}
For  any $u\in {\mathcal A}_\rho$ {\rm(}with $\rho>1{\rm)}, \af,\bt>-1 $ and $n\ge 0,$  we
have
\begin{equation}\label{jacobiest1}
\begin{split}
\big|\hat
u_n^{\af,\bt}\big|&\le\frac{M}{\rho^{n}}\bigg[\big|\sigma_{n,0}^{\af,\bt}\big|+
\frac{|\sigma_{n,1}^{\af,\bt}|}{\rho}
+\frac{2}{\rho(\rho-1)}\sqrt{\frac{\gamma_0^{\af,\bt}}{\gamma_n^{\af,\bt}}}\; \bigg],
\end{split}
\end{equation}
 where
\begin{equation}\label{sigman0}
\sigma_{n,0}^{\af,\bt}=\frac{\sqrt \pi} 2 \frac{(2n+1)!\Gamma(n+\af+\bt+1)}{\Gamma(n+3/2)\Gamma(2n+\af+\bt+1)},
\quad \sigma_{n,1}^{\af,\bt}=\frac{(\bt-\af)(2n+2)}{2n+\af+\bt+2}\sigma_{n,0}^{\af,\bt},
\end{equation}
and $\gamma_n^{\af,\bt}$ is defined in \eqref{gammafd}.

In particular, if $\af=\bt$, we have
\begin{equation}\label{gegenest1}
\begin{split}
\big|\hat
u_n^{\af,\af}\big|&\le\frac{M}{\rho^{n}}\bigg[\big|\sigma_{n,0}^{\af,\af}\big|
+\frac{2}{\rho^2-1}\sqrt{\frac{\gamma_0^{\af,\af}}
{\gamma_n^{\af,\af}}}\; \bigg].
\end{split}
\end{equation}
\end{thm}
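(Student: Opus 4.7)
\textbf{Proof plan for Theorem \ref{jacobithm}.} The starting point is Lemma \ref{mainthm}, which already reduces the problem to bounding
\[
\sigma_{n,0}^{\af,\bt},\quad \sigma_{n,1}^{\af,\bt},\quad \text{and}\quad S_n:=\sum_{j=0}^\infty \big|\sigma_{n,j+2}^{\af,\bt}-\sigma_{n,j}^{\af,\bt}\big|\rho^{-j}.
\]
My plan is to collapse the difference $\sigma_{n,j+2}^{\af,\bt}-\sigma_{n,j}^{\af,\bt}$ into a single integral by exploiting the Chebyshev identity
\[
U_{n+j+2}(x)-U_{n+j}(x)=2\,T_{n+j+2}(x),
\]
which follows instantly from $U_k(\cos\theta)=\sin((k+1)\theta)/\sin\theta$ and the sum-to-product formula. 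Plugging into \eqref{sigmanj} gives
\[
\sigma_{n,j+2}^{\af,\bt}-\sigma_{n,j}^{\af,\bt}=\frac{2}{\gamma_n^{\af,\bt}}\int_{-1}^1 T_{n+j+2}(x)J_n^{\af,\bt}(x)\omega^{\af,\bt}(x)\,dx,
\]
which I will estimate by Cauchy--Schwarz in the weighted $L^2_{\omega^{\af,\bt}}$ inner product. Using $\|J_n^{\af,\bt}\|_{\omega^{\af,\bt}}=\sqrt{\gamma_n^{\af,\bt}}$ and $|T_k(x)|\le 1$ on $[-1,1]$ together with the observation $\int_{-1}^1 \omega^{\af,\bt}=\gamma_0^{\af,\bt}$ (Beta-function computation), each term of $S_n$ is majorized by $2\sqrt{\gamma_0^{\af,\bt}/\gamma_n^{\af,\bt}}$, so the geometric series yields
\[
S_n \le \frac{2\rho}{\rho-1}\sqrt{\frac{\gamma_0^{\af,\bt}}{\gamma_n^{\af,\bt}}},
\]
and the factor $\rho^{-2}$ in Lemma \ref{mainthm} produces the claimed $2/[\rho(\rho-1)]$ prefactor.

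Next, for the explicit evaluations of $\sigma_{n,0}^{\af,\bt}$ and $\sigma_{n,1}^{\af,\bt}$ I would specialize formula \eqref{tempwa0} in Corollary \ref{sigmanjcor}(v) to $j=0$ (single $m=0$ term) and $j=1$ (two terms $m=0,1$). The $j=0$ case simplifies after pulling the factor $(2n+\af+\bt+1)$ inside $\Gamma(2n+\af+\bt+2)$, yielding the stated $\sigma_{n,0}^{\af,\bt}$. For $j=1$ I would factor the $m=0$ contribution out of the bracket and use the Pochhammer identities $\Gamma(n+5/2)=(n+3/2)\Gamma(n+3/2)$, $\Gamma(2n+4)=(2n+3)\Gamma(2n+3)$, etc.; the telescoping inside the bracket gives a factor $(\bt-\af)/(2n+\af+\bt+2)$, which together with the residual $\Gamma(2n+3)/\Gamma(2n+2)=2n+2$ reproduces $\sigma_{n,1}^{\af,\bt}=(\bt-\af)(2n+2)/(2n+\af+\bt+2)\cdot \sigma_{n,0}^{\af,\bt}$.

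Finally, for the refined Gegenbauer bound \eqref{gegenest1} with $\af=\bt$, I would invoke the parity identity \eqref{sigmanjodd} of Corollary \ref{sigmanjcor}(i), which kills $\sigma_{n,1}^{\af,\af}$ and every odd-indexed term in $S_n$. Only the even indices $j=2k$ survive, and the geometric sum in $S_n$ improves from $\sum \rho^{-j}=\rho/(\rho-1)$ to $\sum \rho^{-2k}=\rho^{2}/(\rho^{2}-1)$; combined with the $\rho^{-2}$ prefactor this gives exactly the constant $2/(\rho^{2}-1)$ in \eqref{gegenest1}.

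I do not expect any serious obstacle: the Cauchy--Schwarz step is uniform in $j$, so I avoid term-by-term analysis of $\sigma_{n,j}^{\af,\bt}$, which would require the Heine-type formulas in (ii)--(iv). The only delicate bookkeeping is in the Gamma-function simplification that produces $\sigma_{n,0}^{\af,\bt}$ and $\sigma_{n,1}^{\af,\bt}$; that is purely mechanical. The essential idea is that the split in the proof of Lemma \ref{mainthm}, which isolates $\sigma_{n,0}$ and $\sigma_{n,1}$ and replaces the rest by a telescoping difference, converts what could be a delicate Jacobi-coefficient estimate into a one-line $L^2_{\omega^{\af,\bt}}$-orthogonality argument against $T_{n+j+2}$.
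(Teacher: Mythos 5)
Your proposal is correct and follows essentially the same route as the paper: it starts from Lemma \ref{mainthm}, telescopes via $U_{n+j+2}-U_{n+j}=2T_{n+j+2}$, bounds the differences uniformly by $2\sqrt{\gamma_0^{\af,\bt}/\gamma_n^{\af,\bt}}$ using Cauchy--Schwarz with $|T_k|\le 1$ and $\int_{-1}^1\omega^{\af,\bt}=\gamma_0^{\af,\bt}$, sums the geometric series, evaluates $\sigma_{n,0}^{\af,\bt},\sigma_{n,1}^{\af,\bt}$ from \eqref{tempwa0}, and uses the parity identity \eqref{sigmanjodd} to restrict to even $j$ in the Gegenbauer case. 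The Gamma-function bookkeeping you outline for $\sigma_{n,1}^{\af,\bt}$ (the bracket collapsing to the factor $(\bt-\af)/(2n+\af+\bt+2)$ times $2n+2$) checks out.
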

\begin{proof}
By \eqref{expancoefest0},
\begin{equation}\label{jacobiesta}
\begin{split}
\big|\hat u_n^{\af,\bt}\big|&\le
\frac{M}{\rho^{n}}\big|\sigma_{n,0}^{\af,\bt}\big|
+\frac{M}{\rho^{n+1}}\big|\sigma_{n,1}^{\af,\bt}\big|
+\frac{M}{\rho^{n+2}}
\sum_{j=0}^{\infty}\big|\sigma_{n,j+2}^{\af,\bt}-\sigma_{n,j}^{\af,\bt}\big|\frac{1}{\rho^{j}}.
\end{split}
\end{equation}
The factors $\sigma_{n,0}^{\af,\bt}$ and $\sigma_{n,1}^{\af,\bt}$ in \eqref{sigman0}  are
computed from \eqref{tempwa0} directly, so it suffices to estimate the infinite sum in \eqref{jacobiesta}.
Recall the identity (cf. \cite{Mason03}):
\begin{equation}\label{UTrela}
U_k(x)-U_{k-2}(x)=2T_k(x),\quad k\ge 2.
\end{equation}
Then we infer from  \eqref{sigmanj} that
\begin{equation}\label{sigmanj3ab}
\begin{split}
\sigma_{n,j+2}^{\af,\bt}-\sigma_{n,j}^{\af,\bt}&= \frac{1}{\gamma_n^{\af,\bt}}\int_{-1}^1\big(U_{n+j+2}(x)-U_{n+j}(x)\big)J_n^{\af,\bt}(x)\omega^{\af,\bt}(x)\,dx
\\&=\frac{2}{\gamma_n^{\af,\bt}}
\int_{-1}^1T_{n+j+2}(x)J_n^{\af,\bt}(x)\omega^{\af,\bt}(x)\,dx,\quad n, j\ge 0.
\end{split}
\end{equation}
Thus, using the Cauchy-Schwartz inequality, the orthogonality \eqref{jacobi_orth}, and the fact $|T_k(x)|\le 1,$ leads to
\begin{equation}\label{sigmanj3a}
\big|\sigma_{n,j+2}^{\af,\bt}-\sigma_{n,j}^{\af,\bt}\big| \le
\frac 2{\sqrt{\gamma_n^{\af,\bt}}}\Big(\int_{-1}^1 T_{n+j+2}^2(x) \omega^{\af,\bt}(x)\, dx \Big)^{1/2}\le 2\sqrt{\frac{\gamma_0^{\af,\bt}}{\gamma_n^{\af,\bt}}}.
\end{equation}
Therefore, the bound \eqref{jacobiest1} follows from $\sum_{j=0}^{\infty}\rho^{-j}=1/(1-\rho^{-1}),$ as $\rho>1.$

 For $\af=\bt,$ since $|\sigma_{n,2l+1}^{\af,\af}|=0,$ for all $l\ge0$ (cf. Corollary \ref{sigmanjcor}
 (i)), we have
 \[
\sum_{j=0}^{\infty}\big|\sigma_{n,j+2}^{\af,\af}-\sigma_{n,j}^{\af,\af}\big|\frac{1}{\rho^{j}}=
\sum_{l=0}^{\infty}\big|\sigma_{n,2l+2}^{\af,\af}-\sigma_{n,2l}^{\af,\af}\big|\frac{1}{\rho^{2l}}
\le2\sqrt{\frac{\gamma_0^{\af,\af}}{\gamma_n^{\af,\af}}}\frac{1}{1-\rho^{-2}}.
 \]
This yields the refined bound in \eqref{gegenest1}.
\end{proof}

\begin{rem}\label{astotpic} Using  Lemma \ref{lemGammaratio}, we can characterize the explicit  dependence of the upper bounds in
\eqref{jacobiest1} and \eqref{gegenest1} on $n,\alpha,\beta.$ Indeed,
   for  $\af,\bt>-1, n\ge 1$ and $n+\alpha+\beta>0, $
\begin{equation}\label{asymptcons1}
\begin{split}
\sigma_{n,0}^{\af,\bt}&\overset{(\ref{sigman0})}=\frac{\sqrt \pi} 2
\frac{\Gamma(n+\af+\bt+1)}{\Gamma(n+3/2)} \frac{(2n+1)!}{\Gamma(2n+\af+\bt+1)}
\\&\overset{(\ref{Gammaratio})}\le \frac{\sqrt \pi} 2\big(\Upsilon_{n}^{\af+\bt+1,3/2}n^{\af+\bt+1-3/2}\big) \big(\Upsilon_{2n}^{2,\af+\bt+1}(2n)^{2-(\af+\bt+1)}\big)
\\&=\frac{\sqrt{\pi n}}{{2^{\af+\bt}}}\Upsilon_{n}^{\af+\bt+1,3/2}\Upsilon_{2n}^{2,\af+\bt+1}\overset{(\ref{consUps})}=\frac{\sqrt{\pi n}}{{2^{\af+\bt}}}\big(1+O(n^{-1})\big),
\end{split}
\end{equation}
which implies
\begin{equation}\label{asymptcons2}
\begin{split}
|\sigma_{n,1}^{\af,\bt}|&\overset{(\ref{sigman0})}=\frac{|\alpha-\beta|(2n+2)}{2n+\af+\bt+2}\sigma_{n,0}^{\af,\bt}\le\frac{|\alpha-\beta|(2n+2)}{2n+\af+\bt+2}\frac{\sqrt{\pi
n}}{{2^{\af+\bt}}}\Upsilon_{n}^{\af+\bt+1,3/2}\Upsilon_{2n}^{2,\af+\bt+1}\\
&=|\af-\bt|\frac{\sqrt{\pi n}}{{2^{\af+\bt}}}\big(1+O(n^{-1})\big).
\end{split}
\end{equation}
Similarly, one verifies 
\begin{equation}\label{asymptcons3}
\begin{split}
\frac{\gamma_0^{\af,\bt}}{\gamma_n^{\af,\bt}} &\overset{(\ref{gammafd})}=(2n+\alpha+\beta+1)
\frac{\gamma_0^{\af,\bt}}{2^{\af+\bt+1}}\frac{n!\Gamma(n+\alpha+\beta+1)}{\Gamma(n+\alpha+1)\Gamma(n+\beta+1)}
\\&\le(2n+\alpha+\beta+1)
\frac{\Gamma(\alpha+1)\Gamma(\beta+1)}{\Gamma(\alpha+\beta+2)}\Upsilon_n^{1,\alpha+1}\Upsilon_n^{\alpha+\beta+1,\beta+1}\\
&\overset{(\ref{consUps})}=\frac{2\Gamma(\alpha+1)\Gamma(\beta+1)}{\Gamma(\alpha+\beta+2)} n \big(1+O(n^{-1})\big).
\end{split}
\end{equation}
Consequently,  we infer from the estimate \eqref{jacobiest1} that for fixed $\af,\bt>-1$ and $n\gg 1,$  
\begin{equation}\label{asympJacobi}
|\hat u_n^{\af,\bt}|\le  C_n M\bigg(\frac{\sqrt{\pi
}}{2^{\af+\bt}}\Big(1+\frac{|\af-\bt|}{\rho}\Big)+
\sqrt{\frac{\Gamma(\alpha+1)\Gamma(\beta+1)}{\Gamma(\alpha+\beta+2)}}\frac{2\sqrt{2}}{\rho(\rho-1)}
\bigg)\frac{\sqrt n}{\rho^n},
\end{equation}
and likewise,  we find from \eqref{gegenest1} that
\begin{equation}\label{asympJacobi2}
|\hat u_n^{\af,\af}|\le C_n M \bigg(\frac{\sqrt{\pi
}}{2^{2\af}}+
\frac{\Gamma(\alpha+1)}{\sqrt{\Gamma(2\alpha+2)}}\frac{2\sqrt{2}}{\rho^2-1}
\bigg)\frac{\sqrt n}{\rho^n},
\end{equation}
where $C_n=1+O(n^{-1}).$ \qed
\end{rem}

\begin{rem}\label{tightness} It is worthwhile to show  that the bound obtained in this way is tighter than \eqref{xiangres} obtained in \cite{xiang2012error}. Indeed, it follows from \eqref{chbytype2}, \eqref{dtn1} and \eqref{jacobi_orth} that for $n\ge 1$ and $j=0,1,$
\begin{align*}
\sigma_{n,j}^{\af,\bt}&=\frac{1}{\gamma_n^{\af,\bt}}\frac 1
{n+j+1}\int_{-1}^1 T_{n+j+1}'(x)
J_n^{\af,\bt}(x)\omega^{\af,\bt}(x)dx\\& =
\frac{2}{\gamma_n^{\af,\bt}}
\underset{k+n+j+1\;\text{odd}}{\underset{k=0}{\sum^{n+j}}}
\frac1{c_k}\int_{-1}^1 T_k(x) J_n^{\af,\bt}(x)\omega^{\af,\bt}(x)dx\\
&=  \frac{2}{\gamma_n^{\af,\bt}}  \int_{-1}^1 T_{n+j}(x) J_n^{\af,\bt}(x)\omega^{\af,\bt}(x)dx,
 \end{align*}
 where $c_0=2$ and $c_k=1$ for $k\ge 1.$
 Following \eqref{sigmanj3ab}-\eqref{sigmanj3a}, we have
 $$|\sigma_{n,j}^{\af,\bt}|\le 2 \sqrt{\frac{\gamma_0^{\af,\bt}} {\gamma_n^{\af,\bt}}},\quad n\ge 1,\;\; j=0,1. $$
Finally,  a straightforward calculation leads to
\begin{equation}\label{betterest}
\frac{M}{\rho^{n}}\bigg[\big|\sigma_{n,0}^{\af,\bt}\big|+
\frac{|\sigma_{n,1}^{\af,\bt}|}{\rho}
+\frac{2}{\rho(\rho-1)}\sqrt{\frac{\gamma_0^{\af,\bt}}{\gamma_n^{\af,\bt}}}\; \bigg]\le \frac{2M}{\rho^{n-1}(\rho -1)}\sqrt{\frac{\gamma_0^{\af,\bt}}{\gamma_n^{\af,\bt}}}.
\end{equation}
Moreover, we claim from \eqref{gegenest1} that the strict inequality holds, when $\af=\bt>-1.$  One may refer to
Section \ref{sect4} for  numerical evidences.  \qed
\end{rem}

\vskip 6pt

\subsubsection{\bf Chebyshev-type expansions ($\af=k-1/2,\bt=l-1/2$ with $k,l\in {\mathbb N}$)}~~~

\vskip4pt

In view of  \eqref{constcase2}, it follows from  \eqref{expancoepf1} that the Chebyshev coefficient takes the simplest form:
\begin{equation}\label{chbydecay0}
\hat u_n^{-1/2,-1/2}= \frac{\sigma_{n,0}^{-1/2,-1/2}}{2\pi \ri}\oint_{{\mathcal C}_\rho}\frac{u(z)}{w^{n+1}}\,dw.
\end{equation}
Thus, using  \eqref{constcase2} and \eqref{expancoefest0} leads to
\begin{equation}\label{chbydecay}
|\hat u_n^{-1/2,-1/2}|\le \dfrac{2\sqrt{\pi}\Gamma(n+1)}{\Gamma(n+1/2)}\frac{M}{\rho^n}.
\end{equation}
This leads to the estimate for the expansion coefficients, denoted by $\{\hat u_n^C\}$ as before,  in terms of
 $\{T_n(x)\}:$
 \begin{equation}\label{chebyest3}
\big|\hat u_n^C\big|\le \frac{2M}{\rho^{n}},\quad n\ge 0,
\end{equation}
as documented in e.g., \cite{rivlin1990chebyshev}.

For the second-kind Chebyshev case,  we find from \eqref{expancoef}
  the closed-form formula like \eqref{chbydecay0}:
\begin{align}\label{2ndcheb1}
\hat u_n^{1/2,1/2}=\frac{\sigma_{n,0}^{1/2,1/2}}{\pi
\ri}\oint_{\mathcal{E}_\rho}\frac{u(z)}{w^{n+1}}\,dz,
\end{align}
but the contour integration is on ${\mathcal E}_\rho.$ It follows
from  \eqref{constcase} and  \eqref{expancoepf1} that
\begin{align}\label{estcheb2nd}
|\hat u_n^{1/2,1/2}|\le \frac 1 {2 \sqrt \pi }
\frac{(n+1)!}{\Gamma(n+3/2)}
\Big|\oint_{\mathcal{E}_\rho}\frac{u(z)}{w^{n+1}}\,dz\Big|\le
\frac{\sqrt \pi} 2 \frac{(n+1)!}{\Gamma(n+3/2)}\frac M
{\rho^n}{\Big(1+\frac{1}{\rho^2}\Big)}.
\end{align}
Like \eqref{chebyest3}, if we re-scale the expansion in terms of $\{U_n\},$ i.e.,
$$\hat u_n^U=\frac 2 \pi \int_{-1}^1 u(x) U_n(x) \sqrt{1-x^2}\, dx,  $$
then we find from \eqref{chbytype2} and \eqref{estcheb2nd} that
\begin{align}\label{estcheb2nd1}
\big|\hat
u_n^U\big|=\frac{2}{\sqrt{\pi}}\frac{\Gamma(n+3/2)}{\Gamma(n+2)}|\hat
u_n^{1/2,1/2}|\le  \frac M
{\rho^n}{\Big(1+\frac{1}{\rho^2}\Big)}.
\end{align}
\begin{rem}\label{2ndchbrem} It is seen from  \eqref{2ndcheb1} that the second-kind Chebyshev coefficient takes the simplest form
on the contour $\mathcal E_\rho.$ This motivates us to estimate the contour integral directly by
$$\Big|\oint_{\mathcal{E}_\rho}\frac{u(z)}{w^{n+1}}\,dz\Big|\le \frac{M}{\rho^{n+1}}\oint_{\mathcal E_\rho} |dz|=\frac{M}{\rho^{n+1}}L({\mathcal E}_\rho),  $$
which implies
\begin{align}\label{estcheb2nd2}
\big|\hat
u_n^U\big|\le  \frac{M}{\rho^{n+1}} \frac{L({\mathcal E}_\rho)}{\pi}.
\end{align}
By \eqref{Lehp},
$$ \frac {L({\mathcal E}_\rho)}{\pi \rho}\le \sqrt{1+\frac 1 {\rho^4}}<1+\frac{1}{\rho^2}.$$
Therefore, the estimate  \eqref{estcheb2nd2} is slightly sharper than \eqref{estcheb2nd1}. \qed
\end{rem}

Some refined results can also be derived for  $\af=k+1/2, \bt=l+1/2$ with $k,l\in\mathbb{N}.$   Indeed, we find that
$\big\{\sigma_{n,j}^{k+1/2,l+1/2}\big\}$ can be computed  explicitly by the following formula.
 \begin{prop}\label{chebtypelem} For any $k,l, n,j \in\mathbb{N},$
 \begin{equation}\label{chebyest21}
\begin{split}
\sigma_{n,j}^{k+1/2,l+1/2}=\sqrt{\frac{\pi}{2}} \frac{1}{\gamma_n^{k+1/2,l+1/2}}\sum_{m=n}^{n+k+l}d_{m}^{k+1/2,l+1/2}\sqrt{\gamma_{m}^{1/2,1/2}}\delta_{m,n+j},
\end{split}
\end{equation}
where $\big\{d_{m}^{k+1/2,l+1/2}\big\}_{m=n}^{n+k+l}$ are given in \eqref{compexps}, and
$\delta_{m,n+j}$ is the Kronecker delta.
 \end{prop}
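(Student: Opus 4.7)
The plan is to exploit the weight factorization
\[
\omega^{k+1/2,l+1/2}(x)=(1-x)^k(1+x)^l\,\omega^{1/2,1/2}(x),
\]
so that after expanding the polynomial $(1-x)^k(1+x)^l J_n^{k+1/2,l+1/2}(x)$ in the basis $\{J_m^{1/2,1/2}\}$, the integral defining $\sigma_{n,j}^{k+1/2,l+1/2}$ collapses to an orthogonality relation for Chebyshev polynomials of the second kind (up to normalization).

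\medskip

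First I would observe that $(1-x)^k(1+x)^l J_n^{k+1/2,l+1/2}(x)$ is a polynomial of exact degree $n+k+l$, and therefore admits a unique expansion
\[
(1-x)^k(1+x)^l J_n^{k+1/2,l+1/2}(x)=\sum_{m=0}^{n+k+l}d_{m}^{k+1/2,l+1/2}\,J_m^{1/2,1/2}(x),
\]
where the coefficients $\{d_{m}^{k+1/2,l+1/2}\}$ are precisely those produced by the connection formula \eqref{compexps}. To see that the terms with $m<n$ drop out, one expands $J_m^{1/2,1/2}$ in the basis $\{J_i^{k+1/2,l+1/2}\}_{i=0}^{m}$ (same polynomial space) and invokes the orthogonality of $J_n^{k+1/2,l+1/2}$ against $\omega^{k+1/2,l+1/2}$; equivalently,
\[
\int_{-1}^1 J_m^{1/2,1/2}(x)\,J_n^{k+1/2,l+1/2}(x)\,\omega^{k+1/2,l+1/2}(x)\,dx=0,\quad m<n.
\]
Hence $d_{m}^{k+1/2,l+1/2}=0$ for $m<n$, and the sum runs effectively from $m=n$ to $m=n+k+l$.

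\medskip

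Next, I would convert between $U_{n+j}$ and $J_{n+j}^{1/2,1/2}$. Since the two are proportional, comparing their $L^2(\omega^{1/2,1/2})$-norms (using $\int_{-1}^{1}U_m^2(1-x^2)^{1/2}dx=\pi/2$ and the definition of $\gamma_m^{1/2,1/2}$) gives
\[
U_{n+j}(x)=\sqrt{\frac{\pi}{2\gamma_{n+j}^{1/2,1/2}}}\,J_{n+j}^{1/2,1/2}(x).
\]
Inserting this together with the expansion above into the definition \eqref{sigmanj}, I obtain
\[
\sigma_{n,j}^{k+1/2,l+1/2}=\frac{1}{\gamma_n^{k+1/2,l+1/2}}\sqrt{\frac{\pi}{2\gamma_{n+j}^{1/2,1/2}}}\sum_{m=n}^{n+k+l}d_{m}^{k+1/2,l+1/2}\int_{-1}^1 J_{n+j}^{1/2,1/2}(x)\,J_m^{1/2,1/2}(x)\,\omega^{1/2,1/2}(x)\,dx.
\]
The orthogonality \eqref{jacobi_orth} with $\af=\bt=1/2$ turns the last integral into $\gamma_m^{1/2,1/2}\delta_{m,n+j}$, and the identity $\sqrt{\pi/(2\gamma_{n+j}^{1/2,1/2})}\,\gamma_{n+j}^{1/2,1/2}=\sqrt{(\pi/2)\,\gamma_{n+j}^{1/2,1/2}}$ produces exactly the square-root factor on the right-hand side of \eqref{chebyest21}.

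\medskip

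The main obstacle is simply to justify the two boundary values of the summation index: the upper bound $m\le n+k+l$ is a direct degree count, while the lower bound $m\ge n$ rests on the orthogonality argument sketched above. Once these are in hand, the identity \eqref{chebyest21} follows by direct calculation from \eqref{sigmanj}, the orthogonality \eqref{jacobi_orth}, and the proportionality between $U_m$ and $J_m^{1/2,1/2}$.
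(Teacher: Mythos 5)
Your proposal is correct and follows essentially the same route as the paper: factor the weight as $\omega^{k+1/2,l+1/2}=(1-x)^k(1+x)^l\omega^{1/2,1/2}$, expand $(1-x)^k(1+x)^lJ_n^{k+1/2,l+1/2}$ via the connection formula \eqref{compexps} with $\alpha=\beta=1/2$, and finish with \eqref{chbytype2} and the orthogonality \eqref{jacobi_orth}. The only (harmless) variation is that you justify the lower summation limit $m\ge n$ by an orthogonality argument, whereas the paper obtains it from the recursions \eqref{mixjacobi} when deriving \eqref{compexps}.
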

 \begin{proof} Using  \eqref{compexps} (with $\alpha=\beta=1/2$), \eqref{sigmanj} and the properties of  Jacobi polynomials (cf. \eqref{jacobi_orth} and \eqref{chbytype2}),  leads to
\begin{equation}\label{chebyest2}
\begin{split}
\sigma_{n,j}^{k+1/2,l+1/2}&=\frac{1}{\gamma_n^{k+1/2,l+1/2}}\sum_{m=n}^{n+k+l}d_{m}^{k+1/2,l+1/2}\int_{-1}^1U_{n+j}(x)J_m^{1/2,1/2}(x)(1-x^2)^{1/2}dx
\\&=\frac{1}{\gamma_n^{k+1/2,l+1/2}}\sqrt{\frac{\pi}{2}}\sum_{m=n}^{n+k+l}d_{m}^{k+1/2,l+1/2}\sqrt{\gamma_{m}^{1/2,1/2}}\delta_{m,n+j},
\end{split}
\end{equation}
This completes the proof.
\end{proof}

Equipped with \eqref{chebyest21}, we can obtain the bound for Chebyshev-type expansion coefficients
 by  computing $\{d_{m}^{k+1/2,l+1/2}\}$ explicitly. To fix the idea, we just consider the case: $k=1$ and $l=0.$
 One finds
$$d_{n}^{3/2,1/2}=1,\quad d_{n+1}^{3/2,1/2}=-\frac{2n+2}{ 2n+3},$$
and
\begin{equation*}
\sigma_{n,0}^{3/2,1/2}=\frac{\sqrt{\pi}}{4}\frac{n!(n+2)}{\Gamma(n+3/2)},\quad
\sigma_{n,1}^{3/2,1/2}=-\frac{\sqrt{\pi}}{4}\frac{(n+1)!}{\Gamma(n+3/2)},\quad  \sigma_{n,j}^{3/2,1/2}=0,\;\; j\ge 2.
\end{equation*}
The estimate \eqref{jacobicoe} reduces to
\begin{equation*}
\begin{split}
\hat u_n^{3/2,1/2}&\le
\frac{M}{\rho^n}\bigg[\sigma_{n,0}^{3/2,1/2}+\frac{\sigma_{n,1}^{3/2,1/2}}{\rho}+
\frac{\sigma_{n,0}^{3/2,1/2}}{\rho^2}+\frac{\sigma_{n,1}^{3/2,1/2}}{\rho^3}\bigg]
\\&=\frac{M}{\rho^n}\Big(1+\frac{1}{\rho^2}\Big)\bigg[\sigma_{n,0}^{3/2,1/2}+\frac{\sigma_{n,1}^{3/2,1/2}}{\rho}\bigg].
\end{split}
\end{equation*}
Thus, we have
\begin{equation}\label{estchebexample0}
|\hat{u}_n^{3/2,1/2}|\le\frac{\sqrt{\pi}}{4}\frac{(n+1)!}{\Gamma(n+3/2)}
\frac{M}{\rho^n}\Big(1+\frac{1}{\rho^2}\Big)\Big(\frac{n+2}{n+1}+\frac{1}{\rho}\Big),
\end{equation}
 and by \eqref{Gammaratio}, we have for $n\ge0,$
\begin{equation}\label{chebytyperatio}
\frac{(n+1)!}{\Gamma(n+3/2)}\le\sqrt{n}\exp\Big(\frac{8n+7}{12(2n+1)(n+1)}+\frac{1}{4n}\Big).
\end{equation}

 Actually,  the infinite sum  in \eqref{expancoefest0} does not appear for the
Chebyshev-type expansions, which  allows us to derive very tight
bounds. However, for the Legendre-type expansions, some care has to
be taken to handle this sum.

\vspace*{6pt}

\subsubsection{\bf Legendre-type expansions ($\af=k,\bt=l$ with $k,l\in {\mathbb N}$)}~~~

\vskip4pt

We first consider the Legendre case.   By \eqref{gammafd}
and \eqref{sigman0},
\[
\gamma_n^{0,0}=\frac{2}{2n+1},\quad
\frac{\gamma_0^{0,0}}{\gamma_{n}^{0,0}}=2n+1, \quad
\sigma_{n,0}^{0,0}=\frac{\sqrt{\pi}\Gamma(n+1)}{\Gamma(n+1/2)},
\]
so the estimate  \eqref{gegenest1} reduces to
\begin{equation}\label{legen}
\big|\hat u_n^{0,0}\big|\le
\frac{M}{\rho^n}\Big[\frac{\sqrt{\pi}\Gamma(n+1)}{\Gamma(n+1/2)}+\frac{2\sqrt{2n+1}}{\rho^2-1}\Big].
\end{equation}

In fact, we can improve this  estimate, as highlighted in the following theorem,  by using the explicit information of  $\sigma_{n,2l}^{0,0}.$
\begin{thm}\label{Th:legexp} Let $\{\hat u_n^{0,0}\}$ be the Legendre expansion coefficients of any $u\in {\mathcal A}_\rho$ with $\rho>1.$ Then for any $n\ge 1,$
\begin{equation}\label{legenest4}
\big|\hat{u}_n^{0,0}\big|\le\frac{M\sqrt{\pi
n}}{\rho^{n}}\Big(1+\frac{n+2}{2n+3}\frac{1}{\rho^2-1}\Big)\exp\Big(\frac{8n-1}{12n(2n-1)}\Big).
\end{equation}
\end{thm}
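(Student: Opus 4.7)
\medskip
\noindent\textbf{Proof proposal.} The plan is to specialize Lemma \ref{mainthm} to $\af=\bt=0$ and take advantage of the explicit formula \eqref{constcase3} for $\sigma_{n,2l}^{0,0}$ together with the vanishing $\sigma_{n,j}^{0,0}=0$ for odd $j$ from Corollary \ref{sigmanjcor}(i). Writing $A_l:=\sigma_{n,2l}^{0,0}=\frac{2n+1}{2}\frac{\Gamma(l+1/2)}{\Gamma(l+1)}\frac{\Gamma(n+l+1)}{\Gamma(n+l+3/2)}$, the estimate \eqref{expancoefest0} collapses to
\begin{equation*}
\big|\hat u_n^{0,0}\big|\le \frac{M}{\rho^n}\Big(A_0+\frac{1}{\rho^2}\sum_{l=0}^{\infty}|A_{l+1}-A_l|\rho^{-2l}\Big),\qquad A_0=\sigma_{n,0}^{0,0}=\frac{\sqrt\pi\,\Gamma(n+1)}{\Gamma(n+1/2)}.
\end{equation*}

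The key step is to estimate the differences $|A_{l+1}-A_l|$ sharply. A direct ratio computation gives $A_{l+1}/A_l=\frac{(l+1/2)(n+l+1)}{(l+1)(n+l+3/2)}<1$, so $\{A_l\}$ is positive and strictly decreasing, whence
$$
|A_{l+1}-A_l|=A_l-A_{l+1}=A_l\cdot\frac{n+3l+2}{2(l+1)(n+l+3/2)}.
$$
I would then show that this quantity is maximized at $l=0$ and equals $A_0\cdot\frac{n+2}{2n+3}$ there. Since $A_l\le A_0$, this reduces to the algebraic inequality $\frac{n+3l+2}{(l+1)(n+l+3/2)}\le\frac{n+2}{n+3/2}$, which, after cross-multiplying, amounts to showing that $(n+2)(l+1)(n+l+3/2)-(n+3l+2)(n+3/2)$ has a non-negative expansion in $l\ge 0$ (the leading check at $l=1$ already indicates the pattern). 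Summing the resulting uniform bound against the geometric series $\sum_{l\ge 0}\rho^{-2l}=\frac{\rho^2}{\rho^2-1}$ produces
\begin{equation*}
\sum_{l=0}^{\infty}|A_{l+1}-A_l|\rho^{-2l}\le A_0\cdot\frac{n+2}{2n+3}\cdot\frac{\rho^2}{\rho^2-1},
\end{equation*}
which, substituted back, yields $\big|\hat u_n^{0,0}\big|\le\frac{M}{\rho^n}A_0\big(1+\frac{n+2}{2n+3}\frac{1}{\rho^2-1}\big)$.

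It remains to bound $A_0=\sqrt\pi\,\Gamma(n+1)/\Gamma(n+1/2)$ by $\sqrt{\pi n}\exp\big(\frac{8n-1}{12n(2n-1)}\big)$. The generic Lemma \ref{lemGammaratio} with $(a,b)=(1,1/2)$ yields a weaker exponential factor, so I would apply the Stirling formula \eqref{stirlingfor} directly to both Gamma's. This gives
$$
\frac{\Gamma(n+1)}{\Gamma(n+1/2)}=\sqrt n\Big(\frac{n}{n-1/2}\Big)^{n}\exp\Big(-\frac12+\frac{\theta_1}{12n}-\frac{\theta_2}{6(2n-1)}\Big),\quad 0<\theta_1,\theta_2<1.
$$
Dropping the negative $\theta_2$-term and using $\ln(1+x)\le x$ with $x=1/(2n-1)$ gives $n\ln\!\big(\tfrac{n}{n-1/2}\big)-\tfrac12\le\tfrac{1}{2(2n-1)}$, so the exponent is bounded by $\frac{1}{2(2n-1)}+\frac{1}{12n}=\frac{8n-1}{12n(2n-1)}$, matching the claim. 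The main obstacle is the combinatorial inequality in the previous paragraph and arranging the Stirling estimate so that the constants line up exactly; both are elementary but require careful bookkeeping. Combining everything completes the proof.
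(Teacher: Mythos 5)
Your proposal follows essentially the same route as the paper's proof: specialize Lemma \ref{mainthm} to $\af=\bt=0$, use the explicit formula \eqref{constcase3} and the monotonicity of $\sigma_{n,2l}^{0,0}$ to bound the consecutive differences by their value at $l=0$, sum the geometric series, and finish with a direct Stirling estimate of $\Gamma(n+1)/\Gamma(n+1/2)$ (your observation that the generic Lemma \ref{lemGammaratio} as stated gives a slightly weaker exponential factor, so one must return to \eqref{stirlingfor}, is a correct and worthwhile refinement of the paper's citation). The only flaw is an arithmetic slip in the difference identity, which should read $A_l-A_{l+1}=A_l\,\frac{n+2l+2}{2(l+1)(n+l+3/2)}$ (numerator $n+2l+2$, not $n+3l+2$), in agreement with \eqref{sigmanj2ca0}; since $\frac{n+2l+2}{(l+1)(n+l+3/2)}$ is decreasing in $l$, your bound $|A_{l+1}-A_l|\le A_0\frac{n+2}{2n+3}$ and the remainder of the argument go through unchanged.
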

\begin{proof} A straightforward calculation from
\eqref{constcase3} yields
\begin{equation}\label{sigmanj2ca0}
\sigma_{n,2l+2}^{0,0}-\sigma_{n,2l}^{0,0}=-\frac{n+2l+2}{2(l+1)(n+l+3/2)}\sigma_{n,2l}^{0,0},\quad
 l\ge 0,
\end{equation}
which implies  $\{\sigma_{n,2l}^{0,0}\}$ is strictly descending with
respect to $l.$ Hence, we have
\begin{equation}\label{sigmanj2ca}
\big|\sigma_{n,2l+2}^{0,0}-\sigma_{n,2l}^{0,0}\big|
=\frac{n+2l+2}{2(l+1)(n+l+3/2)}\sigma_{n,2l}^{0,0} \le
\frac{n+2}{2n+3}\sigma_{n,0}^{0,0},
\end{equation}
where we used the fact that ${n+2l+2}/{((l+1)(n+l+3/2))}$ is
strictly descending with respect to $l.$ Then,  we obtain the
improved bound from \eqref{expancoefest0}:
\begin{equation}\label{legenest3}
\begin{split}
\big|\hat{u}_n^{0,0}\big|&\le\frac{M}{\rho^{n}}\sigma_{n,0}^{0,0}\Big(1+
\frac{n+2}{2n+3}\sum_{l=0}^{\infty}\frac{1}{\rho^{2l+2}}\Big)
=\frac{\sqrt{\pi}
M}{\rho^{n}}\frac{\Gamma(n+1)}{\Gamma(n+1/2)}\Big(1+\frac{n+2}{2n+3}\frac{1}{\rho^2-1}\Big),
\end{split}
\end{equation}
and by \eqref{Gammaratio},
\begin{equation}\label{legensigmaest}
\frac{\Gamma(n+1)}{\Gamma(n+1/2)}\le
\sqrt{n}\exp\Big(\frac{8n-1}{12n(2n-1)}\Big),\quad n\ge 1.
\end{equation}
This completes the proof.
\end{proof}


\begin{rem}\label{legenrem} We compare the bound in \eqref{legenest4} with the existing ones.
Davis \cite[Page 313]{davis1975interpolation} stated the bound
$$\big|\hat{u}_n^{0,0}\big|\le \frac{2n+1} 2 \frac{M L({\mathcal E}_\rho)} {\rho^n (\rho-1)} \overset{(\ref{Lehp})}\le
\frac{2n+1} 2 \frac{\pi \sqrt{\rho^2+\rho^{-2}} M } {\rho^n (\rho-1)}, $$
where clearly the algebraic order of $n$ in the numerator is not optimal. The following asymptotic
bound can be obtained from \cite[Eq. (32) and Eq. (38)]{Kambo70} and \cite[Eq. (12.4.25)]{davis1975interpolation}:
\begin{equation*}\label{davisbnd}
\big|\hat{u}_n^{0,0}\big|\le  \frac{M\sqrt{\pi
n}}{\rho^{n}} \frac{\sqrt{\rho^4+1}}{\rho^2-1},\quad n\gg 1,
\end{equation*}
while the asymptotic estimate derived  from \eqref{legenest4} is
\begin{equation}\label{Kamboestadded}
\big|\hat{u}_n^{0,0}\big|\le  \frac{M\sqrt{\pi n}}{\rho^{n}} \frac{{\rho^2-1/2}}{\rho^2-1},\quad n\gg 1,
\end{equation}
which is sharper. Another bound for comparison is obtained in the recent paper \cite{xiang2012error}:
 \begin{equation}\label{xianglegenbnd}
\big|\hat{u}_n^{0,0}\big|\le
\frac{2\sqrt{n}M}{\rho^{n}}\Big(1+\frac{1}{\rho^2-1}\Big),\quad n\ge
1,
\end{equation}
which is also inferior to  our estimate  \eqref{legenest4}.  Some comparisons in numerical perspective  are given in
Section \ref{sect4}.
 \qed
\end{rem}

 Like the Chebysheve case, we can derive similar refined estimates for  Legendre-type expansions with $\alpha=k, \beta=l$ and $k,l\in {\mathbb N}.$  The counterpart of Proposition \ref{chebtypelem} is stated as follows, which can be obtained by using   \eqref{compexps} (with $\alpha=\beta=0$), \eqref{sigmanj} and the properties of  Jacobi polynomials (e.g., \eqref{jacobi_orth}) as before.
 \begin{prop}\label{legentypelem} For any $k,l, n,j \in\mathbb{N},$
 \begin{equation}\label{legenest21}
\sigma_{n,j}^{k,l}=\frac{1}{\gamma_n^{k,l}}\sum\limits_{m=n}^{n+k+l}d_{m}^{k,l}\gamma_m^{0,0}\sigma_{m,n+j-m}^{0,0},
\end{equation}
where $\big\{d_{m}^{k,l}\big\}_{m=n}^{n+k+l}$ are the same as in
\eqref{compexps}, and $\big\{\sigma_{m,n+j-m}^{0,0}\big\}$ are computed by
\eqref{constcase3}.
 \end{prop}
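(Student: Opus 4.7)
The plan is to mirror, step by step, the proof of Proposition \ref{chebtypelem}, with the Legendre family ($\af=\bt=0$) playing the role that the Chebyshev polynomials of the second kind ($\af=\bt=1/2$) played there. The hypothesis of Proposition \ref{legentypelem} is essentially the companion expansion identity \eqref{compexps} specialized to $\af=\bt=0$, which writes the weighted polynomial $\omega^{k,l}(x) J_n^{k,l}(x)=(1-x)^k(1+x)^l J_n^{k,l}(x)$ as a finite sum of Legendre polynomials
\[
\omega^{k,l}(x)J_n^{k,l}(x)=\sum_{m=n}^{n+k+l} d_m^{k,l}\, J_m^{0,0}(x),
\]
with coefficients $d_m^{k,l}$ depending only on $k,l,n$.

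First I would substitute this expansion directly into the definition \eqref{sigmanj} of $\sigma_{n,j}^{k,l}$, interchange the finite sum with the integral, and write
\[
\sigma_{n,j}^{k,l}=\frac{1}{\gamma_n^{k,l}}\sum_{m=n}^{n+k+l} d_m^{k,l}\int_{-1}^{1} U_{n+j}(x)\, J_m^{0,0}(x)\, dx.
\]
Next I would recognize each inner integral as a Legendre $\sigma$-quantity with degree $m$. Going back to the definition \eqref{sigmanj} with $(\af,\bt)=(0,0)$, degree $m$, and index $n+j-m$, one has
\[
\gamma_m^{0,0}\sigma_{m,\,n+j-m}^{0,0}=\int_{-1}^{1} U_{m+(n+j-m)}(x)\, J_m^{0,0}(x)\, dx=\int_{-1}^{1} U_{n+j}(x)\, J_m^{0,0}(x)\, dx,
\]
so each integral equals $\gamma_m^{0,0}\sigma_{m,n+j-m}^{0,0}$, and plugging back yields exactly \eqref{legenest21}.

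The one point requiring a brief comment is the convention when $n+j-m<0$, i.e.\ when $n+j<m$ for some terms with large $m$ (this can happen if $j<k+l$). In that case $U_{n+j}$ is a polynomial of degree strictly less than $m$, and by the orthogonality of the Legendre polynomial $J_m^{0,0}$ (with unit weight) to all lower-degree polynomials the integral vanishes. Accordingly we adopt the convention $\sigma_{m,i}^{0,0}=0$ for $i<0$, which is consistent with \eqref{constcase3} (extended by zero), so that the formula \eqref{legenest21} is valid for all $k,l,n,j\in\mathbb N$.

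The main (and minor) obstacle is simply the bookkeeping of this convention and the justification of interchanging sum and integral, which is trivial here because the sum is finite. All other steps are purely algebraic manipulations using \eqref{compexps}, \eqref{sigmanj}, \eqref{jacobi_orth} and \eqref{constcase3}, exactly as indicated in the statement preceding the proposition.
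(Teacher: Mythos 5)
Your proof is correct and is exactly the argument the paper intends: substitute the finite expansion \eqref{compexps} with $\af=\bt=0$ into the definition \eqref{sigmanj}, swap the finite sum with the integral, and identify each resulting integral as $\gamma_m^{0,0}\sigma_{m,n+j-m}^{0,0}$, just as in the proof of Proposition \ref{chebtypelem}. Your explicit remark that terms with $n+j-m<0$ vanish by orthogonality of $J_m^{0,0}$ to lower-degree polynomials is a sensible clarification of a convention the paper leaves implicit.
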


Once again,  to fix the idea, we just consider the case: $k=1$ and $l=0.$  One finds
$d_{n}^{1,0}=1, d_{n+1}^{1,0}=-1,$ and
\begin{equation*}
\begin{split}
\sigma_{n,j}^{1,0}&=
\frac{1}{\gamma_n^{1,0}}\big(\gamma_n^{0,0}\sigma_{n,j}^{0,0}-\gamma_{n+1}^{0,0}\sigma_{n+1,j-1}^{0,0}\big)
=\frac{n+1}{2n+1}\sigma_{n,j}^{0,0}-\frac{n+1}{2n+3}\sigma_{n+1,j-1}^{0,0}.
\end{split}
\end{equation*}
By \eqref{constcase3},
\[
\sigma_{n,2l}^{1,0}=\frac{n+1}{2n+1}\sigma_{n,2l}^{0,0},\quad
\sigma_{n,2l+1}^{1,0}=-\frac{n+1}{2n+3}\sigma_{n+1,2l}^{0,0},\quad
l\ge0.
\]
Therefore, with \eqref{sigmanj2ca0} and  \eqref{sigmanj2ca}, the
estimate \eqref{expancoefest0} reduces to
\begin{equation*}\label{legenest1}
\begin{split}
\big|\hat u_n^{1,0}\big| &\le
\frac{M}{\rho^{n}}\big|\sigma_{n,0}^{1,0}\big|
+\frac{M}{\rho^{n+1}}\big|\sigma_{n,1}^{1,0}\big|
+\frac{M}{\rho^{n+2}}
\sum_{j=0}^{\infty}\big|\sigma_{n,j+2}^{1,0}-\sigma_{n,j}^{1,0}\big|\frac{1}{\rho^{j}}
\\&=\frac{M}{\rho^{n}}\frac{n+1}{2n+1}\Big(\sigma_{n,0}^{0,0}
+\frac{1}{\rho^{2}}
\sum_{l=0}^{\infty}\big|\sigma_{n,2l+2}^{0,0}-\sigma_{n,2l}^{0,0}\big|\frac{1}{\rho^{2l}}\Big)
\\&\quad+\frac{M}{\rho^{n+1}}\frac{n+1}{2n+3}\Big(\sigma_{n+1,0}^{0,0}+\frac{1}{\rho^{2}}
\sum_{l=0}^{\infty}\big|\sigma_{n+1,2l+2}^{0,0}-\sigma_{n+1,2l}^{0,0}\big|\frac{1}{\rho^{2l}}\Big)
\\&\le\sigma_{n,0}^{0,0}\frac{n+1}{2n+1}\frac{M}{\rho^{n}}\Big(1+\frac{n+2}{2n+3}\frac{1}{\rho^2-1}\Big)
+\sigma_{n+1,0}^{0,0}\frac{n+1}{2n+3}\frac{M}{\rho^{n+1}}\Big(1+\frac{n+3}{2n+5}\frac{1}{\rho^2-1}\Big).
\end{split}
\end{equation*}
Working out the expressions of $\sigma_{n,0}^{0,0}$ and
$\sigma_{n+1,0}^{0,0}$ by \eqref{sigman0}, we have
\begin{equation}\label{legenest10}
\begin{split}
\big|\hat u_n^{1,0}\big|&
\le\frac{M}{\rho^{n}}\frac{\sqrt{\pi}\Gamma(n+2)}{\Gamma(n+3/2)}\Big\{\frac{1}{2}+\frac{n+2}{2(2n+3)}\frac{1}{\rho^2-1}
+\frac{1}{\rho}\frac{n+1}{2n+3}\Big(1+\frac{n+3}{2n+5}\frac{1}{\rho^2-1}\Big)\Big\}.
\end{split}
\end{equation}
Note that the ratio of the Gamma functions can be bounded as in
\eqref{chebytyperatio}.

The same process applies to  other  $k, l\in {\mathbb N},$  but the
derivation seems tedious.

\vskip 6pt

\subsection{Estimates for truncated Jacobi expansions} Given  a cut-off number $N\ge 1$ and  $N\in {\mathbb N},$  we define
the partial sum
\begin{equation}\label{partialsum}
\big(\pi_{N}^{\af,\bt} u\big)(x)=\sum_{n=0}^{N-1} \hat u_n^{\af,\bt}
J_n^{\af,\bt}(x),
\end{equation}
where $\{\hat u_n^{\af,\bt}\}$ are the Jacobi expansion coefficients defined in \eqref{jacobicoe}.
To this end, let $L^2_{\omega^{\af,\bt}}(I)$ be the weighted $L^2$-space on $I=(-1,1),$  and its norm is denoted by
$\|\cdot\|_{\omega^{\af,\bt}},$ where we drop the weight function, if $\af=\bt=0.$

Notice that  $\pi_{N}^{\af,\bt} u$ is the $L^2_{\omega^{\af,\bt}}$-projection of $u$ upon $P_{N-1}$ (denoting the set of all
algebraic polynomials of degree at most $N-1$), that is,
$\pi_{N}^{\af,\bt} u$ is the best approximation to $u$ in the norm $\|\cdot\|_{\omega^{\af,\bt}}.$
With the previous bounds for the expansion coefficients, we can estimate the truncation error straightforwardly.
\begin{thm}\label{coeffdecay} For any
 $u\in {\mathcal A}_\rho$ with $\rho>1,$ and $\af,\bt>-1,$ we have
\begin{equation}\label{L2errorest}
\begin{split}
\big\|\pi_N^{\alpha,\beta}u-u\big\|_{\omega^{\alpha,\beta}}&\le
\bigg[\sqrt{\frac{\pi }{2^{\af+\bt}}}
\Big(1+\frac{|\af-\bt|}{\rho}\Big)+\frac{2\sqrt{\gamma_0^{\af,\bt}}}{\rho(\rho-1)}\;\bigg]\frac{C_N M}{\rho^{N-1}\sqrt{\rho^2-1}},
\end{split}
\end{equation}
where $\gamma_0^{\af,\bt}$ is given in \eqref{gammafd} and
$C_N \approx 1.$
\end{thm}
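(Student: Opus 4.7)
The plan is to start from the orthogonality of the Jacobi polynomials: since $\pi_N^{\af,\bt}u$ is the orthogonal projection of $u$ onto $P_{N-1}$ in $L^2_{\omega^{\af,\bt}}(I),$ Parseval's identity gives
\begin{equation*}
\big\|\pi_N^{\af,\bt}u-u\big\|_{\omega^{\af,\bt}}^2
=\sum_{n=N}^{\infty}\big|\hat u_n^{\af,\bt}\big|^2\,\gamma_n^{\af,\bt}.
\end{equation*}
So the whole problem reduces to controlling a weighted $\ell^2$-tail of the coefficients, for which Theorem~\ref{jacobithm} provides the three-term bound \eqref{jacobiest1}.

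Next, I would feed \eqref{jacobiest1} into the above sum and apply Minkowski's inequality with weights $\gamma_n^{\af,\bt}$ to split the tail into three pieces:
\begin{equation*}
\Big(\sum_{n=N}^{\infty}|\hat u_n^{\af,\bt}|^2\gamma_n^{\af,\bt}\Big)^{1/2}
\le \Big(\sum_{n=N}^{\infty}\tfrac{M^2}{\rho^{2n}}|\sigma_{n,0}^{\af,\bt}|^2\gamma_n^{\af,\bt}\Big)^{1/2}
+\Big(\sum_{n=N}^{\infty}\tfrac{M^2}{\rho^{2n+2}}|\sigma_{n,1}^{\af,\bt}|^2\gamma_n^{\af,\bt}\Big)^{1/2}
+\Big(\sum_{n=N}^{\infty}\tfrac{4M^2\gamma_0^{\af,\bt}}{\rho^{2n+2}(\rho-1)^2}\Big)^{1/2}.
\end{equation*}
The third piece is trivial: it is a pure geometric series in $\rho^{-2n},$ summing to $\rho^{-2N}/(\rho^2-1),$ which yields the $\frac{2\sqrt{\gamma_0^{\af,\bt}}}{\rho(\rho-1)\sqrt{\rho^2-1}}\cdot M\rho^{-N+1}$ contribution after rearrangement.

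For the first two pieces the key point is that the $\sqrt{n}$ growth of $|\sigma_{n,0}^{\af,\bt}|$ and $|\sigma_{n,1}^{\af,\bt}|$ exposed in \eqref{asymptcons1}--\eqref{asymptcons2} is exactly balanced by the $1/n$ decay of $\gamma_n^{\af,\bt}$ shown through \eqref{gammacoef}. Multiplying those two estimates, the $n$-dependence collapses to a uniform constant of size $\pi/2^{\af+\bt}$ times $(1+O(n^{-1})),$ so each summand becomes a constant times $\rho^{-2n}$ and the remaining geometric tail contributes $\rho^{-2N+2}/(\rho^2-1).$ Using Lemma~\ref{lemGammaratio} (equivalently \eqref{consUps}) the $(1+O(n^{-1}))$ prefactors can be absorbed into a single constant $C_N=1+O(N^{-1})\approx 1$ by taking the supremum over $n\ge N.$

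Putting the three pieces together and factoring out the common scale $M\rho^{-N+1}/\sqrt{\rho^2-1}$ produces precisely the bound stated in \eqref{L2errorest}. The only step that requires any real care is the cancellation between $|\sigma_{n,0}^{\af,\bt}|^2$ and $\gamma_n^{\af,\bt}$: it is where the ratios of $\Gamma$-functions collapse via Lemma~\ref{lemGammaratio}, and it is also the mechanism that keeps the constant sharp (in particular, with the $2^{-(\af+\bt)/2}$ factor and the $(1+|\af-\bt|/\rho)$ term coming from the $\sigma_{n,0}$ and $\sigma_{n,1}$ contributions, respectively). The other manipulations (Parseval, Minkowski, geometric series) are routine.
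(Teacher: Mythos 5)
Your proposal is correct and follows essentially the same route as the paper: Parseval for the tail, the coefficient bound of Theorem \ref{jacobithm}, the cancellation of the $\sqrt{n}$ growth of $|\sigma_{n,0}^{\af,\bt}|,|\sigma_{n,1}^{\af,\bt}|$ against the $O(n^{-1})$ decay of $\gamma_n^{\af,\bt}$ via Lemma \ref{lemGammaratio}, and a geometric tail giving $\rho^{-(N-1)}/\sqrt{\rho^2-1}$. The only cosmetic difference is that you split the tail into three $\ell^2$ pieces by Minkowski, whereas the paper first bounds $|\hat u_n^{\af,\bt}|\sqrt{\gamma_n^{\af,\bt}}$ by a single $n$-independent constant times $\rho^{-n}$ and sums once; both yield the same constant.
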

\begin{proof}
By the orthogonality (cf \eqref{jacobi_orth}-\eqref{gammafd}) of
Jacobi polynomials, we have
\[
  \begin{split}
\big\|\pi_N^{\alpha,\beta}u-u\big\|_{\omega^{\alpha,\beta}}^2&=\sum_{n=N}^\infty
|\hat u_n^{\alpha,\beta}|^2\gamma_n^{\alpha,\beta}.
\end{split}
\]
It follows from the estimate of $\big|\hat u_n^{\af,\bt}\big|$ in Theorem
\ref{jacobithm}, and a combination of \eqref{gammacoef}-\eqref{consUps} and \eqref{asymptcons1}-\eqref{asymptcons2} that for  $n\ge N\gg 1$,
\begin{equation*}
\begin{split}
\big|\hat
u_n^{\af,\bt}\big|\sqrt{\gamma_n^{\alpha,\beta}}&\le\frac{M}{\rho^{n}}\bigg[\big|\sigma_{n,0}^{\af,\bt}\big|\sqrt{\gamma_n^{\alpha,\beta}}+
\frac{1}{\rho}\big|\sigma_{n,1}^{\af,\bt}\big|\sqrt{\gamma_n^{\alpha,\beta}}
+\frac{2}{\rho(\rho-1)}\sqrt{\gamma_0^{\af,\bt}}\;\bigg]
\\&\le\frac{C_n M}{\rho^{n}}\bigg[\sqrt{\frac{\pi}{2^{\af+\bt}}}
\Big(1+\frac{|\af-\bt|}{\rho}\Big)+\frac{2}{\rho(\rho-1)}\sqrt{\gamma_0^{\af,\bt}}\;\bigg],
\end{split}
\end{equation*}
where $C_n=1+O(n^{-1}).$
Therefore, we have
\begin{equation*}
  \begin{split}
\big\|\pi_N^{\alpha,\beta}u-u\big\|_{\omega^{\alpha,\beta}}&\le
C_N M\bigg[\sqrt{\frac{\pi }{2^{\af+\bt}}}
\Big(1+\frac{|\af-\bt|}{\rho}\Big)+\frac{2\sqrt{\gamma_0^{\af,\bt}}}{\rho(\rho-1)}\;\bigg]
\Big(\sum_{n=N}^\infty\frac{1}{\rho^{2n}}\Big)^{1/2}
\\&\le \bigg[\sqrt{\frac{\pi }{2^{\af+\bt}}}
\Big(1+\frac{|\af-\bt|}{\rho}\Big)+\frac{2\sqrt{\gamma_0^{\af,\bt}}}{\rho(\rho-1)}\;\bigg]\frac { C_NM} {\rho^{N-1}\sqrt{\rho^2-1}}.
\end{split}
\end{equation*}
This ends the proof.
\end{proof}

\begin{rem}\label{highorderest} Note that $\{\frac {d^l}{dx^l} J_n^{\af,\bt}\}_{n\ge l}$ are mutually orthogonal with respect to
$\omega^{\af+l,\bt+l},$ so we can estimate $\big\|(\pi_N^{\alpha,\beta}u-u)^{(l)}\big\|_{\omega^{\alpha+l,\beta+l}}$
in a similar fashion. \qed
\end{rem}

\begin{rem}\label{legremark} Some refined estimates can be obtained from the refined bounds for special cases, e.g.,  $\af=\bt$ or $\af=\bt=0,-1/2.$  Here, we just state the result for the Legendre case:
\begin{equation}\label{L2errorestLeg}
\begin{split}
\big\|\pi_N^{0,0}u-u\big\|\le \Big(1+\frac{1}{2(\rho^2-1)}\Big)\frac{C_N \sqrt \pi M}{\rho^{N-1}\sqrt{\rho^2-1}},
\end{split}
\end{equation}
where $C_N\approx 1$ as before. It follows from Theorem \ref{Th:legexp} and the above process. 
Note that Xiang  \cite{xiang2012error} derived the following estimate for the Legendre expansion:
\begin{equation}\label{xiangL2errorestLeg}
\begin{split}
\big\|\pi_N^{0,0}u-u\big\|\le\frac{2\sqrt{2}M}{\rho^{N-2}(\rho-1)^2}.
\end{split}
\end{equation}
The estimate \eqref{L2errorestLeg} seems tighter than this one. \qed
\end{rem}

\section{Error estimates for Gegenbauer-Gauss quadrature}\label{GGquadratrue}
\setcounter{equation}{0} \setcounter{thm}{0} \setcounter{lmm}{0}


\subsection{Preliminaries}  The Gegenbauer-Gauss quadrature remainder
\eqref{gausslob}-\eqref{gausslob2} with the nodes  being  zeros of the Gegenbauer polynomial
$J_n^{\alpha,\alpha}(x)$, takes the form
\begin{equation}\label{errorformu}
E_n^{GG}[u]=\frac{\gamma_n^{\af,\af}}{\pi \ri}\oint_{\mathcal
E_\rho} \frac {Q_n^{\af,\af}(z)} {J_n^{\af,\af}(z)}  u(z)\, dz,\quad \forall\, u\in {\mathcal A}_\rho,
\end{equation}
where $Q_n^{\af,\af}(z)$ is defined as in \eqref{Qexpa}, namely,
\begin{equation}\label{Qexpa0}
Q_n^{\af,\af}(z)=\frac{1}{2\gamma_n^{\alpha,\af}}\int_{-1}^{1}
\frac{J_n^{\alpha,\af}(x)
\omega^{\alpha,\af}(x)}{z-x}\,dx\overset{(\ref{Jcoe3a0s})}=\sum_{j=0}^{\infty}\frac{\sigma_{n,j}^{\af,\af}}{w^{n+j+1}}
\overset{\re{sigmanjodd}}=\sum_{l=0}^{\infty}\frac{\sigma_{n,2l}^{\af,\af}}{w^{n+2l+1}}.
\end{equation}

As already mentioned,  the analysis of quadrature errors (even for the  Chebyshev case) has attracted
 much attention (see e.g.,
\cite{Chawla68,Chawla68SIAM,Basu70,Gautschi83,Dav.R84,GTV90,Hunter1995,Hunter98}). Just to mention that
Chawla and Jain \cite[Theorem 5]{Chawla68} obtained the estimate:
\begin{equation}\label{chwarchbi}
\big|E_n^{CG}[u] \big|\le \frac{2\pi M}{\rho^{2n}-1},\quad \forall\,
u\in {\mathcal A}_\rho, \;\; \forall\, n\ge 1,
\end{equation}
 Hunter \cite{Hunter1995} derived the general bound
\begin{equation}\label{Huntergenbnd}
\big|E_n^{GG}[u]\big|\le
\frac{4\int_{-1}^1(1-x^2)^{\af}dx}{\rho^{2n-2}(\rho^2-1)},\quad
n\ge1,
\end{equation}
and some refined results for $\af=\pm 1/2$ and $\bt=\pm 1/2$ by
expanding $Q_n^{\af,\af}/J_n^{\af,\af}$ into the Laurent series of
$w$ in the disk enclosed by ${\mathcal C}_\rho, $ and manipulating
the series. It is worthwhile to note  that Gautschi and Varga
\cite{Gautschi83} estimated  the Jacobi-Gauss quadrature (with
$J_n^{\af,\bt}$ and $Q_n^{\af,\bt}$ in place of  $J_n^{\af,\af}$ and
$Q_n^{\af,\af}$ in \eqref{errorformu}, respectively)  by
\begin{equation}\label{errorformu2}
\big|E_n^{JG}[u] \big|\le {\pi^{-1}\gamma_n^{\af,\bt}} M L(\mathcal
E_\rho) \max_{z\in {\mathcal E}_\rho}\big|Q_n^{\af,\bt}(z)/J_n^{\af,\bt}(z)\big|,
\end{equation}
and attempted to find the exact maximum value  on the Bernstein ellipse,  which was
feasible  for  $\af=\pm 1/2$ and $\bt=\pm 1/2$ again. Some conjectures and empirical results were explored  in \cite{Gautschi83} for the general Jacobi case.

Using the explicit expression of Legendre polynomials on the Bernstein ellipse (see e.g., \cite[Lemma 12.4.1]{davis1975interpolation}), Kambo \cite{Kambo70} obtained  the bound for the Legendre-Gauss quadrature:
\begin{equation}\label{kambo}
\big|E_n^{LG}[u]\big|\le {\pi^{-1}\gamma_n^{0,0}} M L(\mathcal
E_\rho) \frac { \max_{z\in {\mathcal E_\rho}}|Q_n^{0,0}(z)|}
{\min_{z\in {\mathcal E_\rho}}|J_n^{0,0}(z)|} \le \frac{d_n
M}{\rho^{2n}} \frac{\rho^2+1}{\rho^2-2},\quad \rho>\sqrt {2},
\end{equation}
where $0<d_n\le \pi.$ While this bound is only valid for $\rho>\sqrt {2},$  it holds for all $n,$ when compared with  the asymptotic estimate (with $n\gg 1$) for the Legendre-Gauss quadrature in \cite{Chawla69}.

In what follows, we aim to extend our analysis to estimate  $E_n^{GG}[u]$ in \eqref{errorformu}. The essential tools include
the explicit formula for  the Gegenbauer polynomial $J_n^{\af,\af}(z)$ on ${\mathcal E}_\rho$ derived in our recent paper \cite{XieWangZhao2011}, and the previous argument for estimating $Q_n^{\af,\af}(z).$
Let us recall the  important formula stated in  \cite[Lemma 3.1]{XieWangZhao2011}.
\begin{lemma}\label{lemma1.4}  Let $z=\frac12(w+w^{-1}).$ Then we have
\begin{equation}\label{Gexp}
J_n^{\alpha,\alpha}(z)=A_n^\alpha
\sum_{k=0}^{n}g_k^{\alpha}g_{n-k}^{\alpha}w^{n-2k},\quad n\ge 0,
\;\; \alpha>-1,\;\; \alpha\not=-1/2,
\end{equation}
where
\begin{equation}\label{ajexp0}
g_0^{\alpha}=1,\;\; g_k^{\alpha}
=\frac{\Gamma(k+\alpha+1/2)}{k!\Gamma(\alpha+1/2)},\;\; 1\le k\le n,
\;\;\text{and}\;\;
A_n^\alpha=\frac{\Gamma(2\alpha+1)\Gamma(n+\alpha+1)}{\Gamma(\alpha+1)\Gamma(n+2\alpha+1)}.
\end{equation}
\end{lemma}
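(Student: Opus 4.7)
The plan is to derive \eqref{Gexp} from the classical generating function for the ultraspherical polynomials, exploiting the algebraic identity $1-2zt+t^2=(1-wt)(1-w^{-1}t)$ produced by the Joukowski-type map $z=(w+w^{-1})/2$. With $\lambda=\alpha+1/2$ (well defined since $\alpha\neq -1/2$), the standard identity
\[
\sum_{n=0}^{\infty}C_n^{(\lambda)}(z)\,t^n=(1-2zt+t^2)^{-\lambda}
\]
factors, under this substitution, as $(1-wt)^{-\alpha-1/2}(1-w^{-1}t)^{-\alpha-1/2}$. This factorization is the key structural observation; it allows us to replace an implicit polynomial recurrence by two independent binomial series.

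Next, I would expand each factor by the generalized binomial theorem. A short calculation identifies $(-1)^k\binom{-\alpha-1/2}{k}=\Gamma(k+\alpha+1/2)/(k!\,\Gamma(\alpha+1/2))=g_k^{\alpha}$, so that
\[
(1-wt)^{-\alpha-1/2}=\sum_{k=0}^{\infty}g_k^{\alpha}w^{k}t^{k},\qquad (1-w^{-1}t)^{-\alpha-1/2}=\sum_{j=0}^{\infty}g_j^{\alpha}w^{-j}t^{j}.
\]
Forming the Cauchy product and reading off the coefficient of $t^n$ yields
\[
C_n^{(\alpha+1/2)}(z)=\sum_{k=0}^{n}g_k^{\alpha}g_{n-k}^{\alpha}w^{n-2k}.
\]
This series identity is first obtained for $|t|$ sufficiently small, but extends to all admissible $w$ because both sides are Laurent polynomials of degree $n$ in $w$.

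The last step is to convert from the ultraspherical normalization to the Szeg\"o-normalized Jacobi form used in \eqref{jacobi_orth}. Evaluating both sides at $x=1$ gives $C_n^{(\alpha+1/2)}(1)=\Gamma(n+2\alpha+1)/(n!\,\Gamma(2\alpha+1))$ and $J_n^{\alpha,\alpha}(1)=\Gamma(n+\alpha+1)/(n!\,\Gamma(\alpha+1))$, and since both polynomial sequences solve the same Sturm-Liouville problem up to a scalar, taking the ratio produces $J_n^{\alpha,\alpha}(x)=A_n^{\alpha}\,C_n^{(\alpha+1/2)}(x)$ with $A_n^{\alpha}$ exactly as in \eqref{ajexp0}. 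Multiplying through then delivers \eqref{Gexp}.

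I expect the chief obstacle to be purely bookkeeping: tracking the Pochhammer / Gamma ratios that appear in $g_k^{\alpha}$ and $A_n^{\alpha}$, and handling the exceptional case $\alpha=-1/2$, where $\Gamma(2\alpha+1)$ is singular and the ultraspherical normalization breaks down. In that limit $C_n^{(0)}$ must be replaced by the Chebyshev polynomial of the first kind, whose analogous representation $T_n(z)=\tfrac12(w^{n}+w^{-n})$ is classical and can be viewed as the degenerate counterpart of \eqref{Gexp}.
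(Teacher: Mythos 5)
Your proof is correct. Note that this paper does not actually prove the lemma --- it is quoted from \cite[Lemma 3.1]{XieWangZhao2011} --- so there is no in-paper argument to compare against; your derivation via the factorization $1-2zt+t^2=(1-wt)(1-w^{-1}t)$ of the Gegenbauer generating function, followed by two binomial expansions and the renormalization $J_n^{\alpha,\alpha}=A_n^\alpha C_n^{(\alpha+1/2)}$ fixed by the values at $x=1$, is the classical route (essentially Szeg\"o's formulas (4.9.19) and (4.7.1)) and all the Gamma-ratio identifications check out, including the symmetry of the sum that lets you write the exponent as $w^{n-2k}$ rather than $w^{2k-n}$. Your remark on $\alpha=-1/2$ also matches how the paper handles that degenerate case in Remark 3.3 via $T_n(z)=\tfrac12(w^n+w^{-n})$.
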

\begin{rem}\label{Chebysv} This formula excludes the Chebyshev case. For $\alpha=-1/2,$ we define
\begin{equation}\label{ajexp01}
g_0^{-1/2}=g_n^{-1/2}=1,\;\; g_k^{-1/2}=0,\;\; 1\le k\le n-1,
\;\;\text{and}\;\; A_n^{-1/2}=\frac{\Gamma(n+1/2)}{2\sqrt{\pi}n!},
\end{equation}
since (see e.g., \cite{davis1975interpolation})
\begin{equation}\label{Gexp2}
T_n(z)= \frac 12 (w^n+w^{-n})= \frac1 {2 A_n^{-1/2}} J^{-1/2,-1/2}_n(z).
\end{equation}
  Hence,  we understand that \eqref{Gexp} holds for
$\af=-1/2$ with the constants given by \eqref{ajexp01}.   \qed
\end{rem}

\subsection{Main results}

We adopt two approaches to estimate the quadrature remainder.
The first one is to expand  $Q_n^{\af,\af}/J_n^{\af,\af}$ in Laurent series of $w\in {\mathcal C}_\rho,$ and then
we use an argument as for  Theorem \ref{mainthm} to obtain the tight error bound. However,
this situation is reminiscent to that in Gautschi and Varga \cite{Gautschi83}, that is,  computable bounds can be derived for general $\alpha.$ We highlight that the computational part  (see \eqref{quadrerr}) is  independent of $\rho$ and $u.$

The second approach is based on an important relation between the quadrature remainder and Gegenbauer expansion coefficient (see
\eqref{quadII}).

\vskip 4pt

\def \epsilon {\mu}
The main estimate  resulted from the first approach is stated as follows.
\begin{thm}\label{quadrathmI} For any $u\in {\mathcal A}_\rho$ with $\rho>1,$ we have that for
$\alpha>-1$ and $n\ge 1,$
\begin{equation}\label{quadrerr}
\begin{split}
\big|E_n^{GG}[u]\big|
&\le\gamma_n^{\alpha,\alpha}\Big[\big|\epsilon_{n,0}^{\alpha,\alpha}\big|+\max_{l\ge0}
\big|\epsilon_{n,2l+2}^{\alpha,\alpha}-\epsilon_{n,2l}^{\alpha,\alpha}\big|\frac{1}{\rho^{2}-1}
\Big]\frac{M}{\rho^{2n}},
\end{split}
\end{equation}
where $\{\epsilon_{n,2l}^{\alpha,\alpha}\}_{l\ge0}$ are computed by the recursive formula:
\begin{equation}\label{epsilonnj}
\epsilon_{n,2l}^{\alpha,\alpha}=\frac{1}{g_n^\alpha}\Big(\frac{\sigma_{n,2l}^{\alpha,\alpha}}{A_n^\alpha}-\sum_{k=1}^{\min\{n,l\}}g_k^\alpha
g_{n-k}^{\alpha}\epsilon_{n,2l-2k}^{\alpha,\alpha}\Big),\;\;l\ge1,\quad
\epsilon_{n,0}^{\alpha,\alpha}=\frac{\sigma_{n,0}^{\alpha,\alpha}}{A_n^\alpha
g_n^\alpha}.
\end{equation}
\end{thm}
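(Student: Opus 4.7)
\medskip

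\noindent\textbf{Proof plan.} The strategy is to expand the meromorphic ratio $Q_n^{\af,\af}(z)/J_n^{\af,\af}(z)$ as a Laurent series in $w^{-1}$ on $\mathcal{C}_\rho$, and then reproduce on the contour integral \eqref{errorformu} exactly the telescoping trick used in Lemma \ref{mainthm}. Concretely, I would write
\begin{equation*}
\frac{Q_n^{\af,\af}(z)}{J_n^{\af,\af}(z)} \;=\; \sum_{l=0}^{\infty} \frac{\epsilon_{n,2l}^{\alpha,\alpha}}{w^{2n+2l+1}}, \qquad z=\tfrac12(w+w^{-1}),\;|w|=\rho,
\end{equation*}
which is legitimate since the zeros of $J_n^{\af,\af}$ all lie in $(-1,1)$ and so correspond to $|w|=1$, while $Q_n^{\af,\af}$ already decays like $w^{-(n+1)}$ by \eqref{Qexpa0}. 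Multiplying across by $J_n^{\af,\af}$ using the closed form \eqref{Gexp}, substituting the series \eqref{Qexpa0} on the left, and matching the coefficient of $w^{-n-2m-1}$ for each $m\ge 0$ produces the system
\begin{equation*}
\sigma_{n,2m}^{\af,\af} \;=\; A_n^\alpha\sum_{k=0}^{\min(n,m)} g_k^{\alpha} g_{n-k}^{\alpha}\, \epsilon_{n,2m-2k}^{\alpha,\alpha}.
\end{equation*}
Isolating the $k=0$ term (note $g_0^\alpha=1$) and solving for $\epsilon_{n,2m}^{\alpha,\alpha}$ gives precisely the recursion \eqref{epsilonnj}, with the initial value $\epsilon_{n,0}^{\alpha,\alpha}=\sigma_{n,0}^{\alpha,\alpha}/(A_n^{\alpha}g_n^{\alpha})$ coming from $m=0$.

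Next I would pull back the contour integral \eqref{errorformu} from $\mathcal{E}_\rho$ to $\mathcal{C}_\rho$ via $z=(w+w^{-1})/2$, $dz=\tfrac12(1-w^{-2})\,dw$, to obtain
\begin{equation*}
E_n^{GG}[u] \;=\; \frac{\gamma_n^{\af,\af}}{2\pi\ri}\sum_{l=0}^{\infty} \epsilon_{n,2l}^{\alpha,\alpha}\oint_{\mathcal{C}_\rho}\frac{u(z)(1-w^{-2})}{w^{2n+2l+1}}\,dw.
\end{equation*}
Splitting $1-w^{-2}$ produces two series in $w^{-(2n+2l+1)}$ and $w^{-(2n+2l+3)}$ respectively. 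Shifting the summation index in the second series by one and combining yields the telescoping form
\begin{equation*}
E_n^{GG}[u] \;=\; \frac{\gamma_n^{\af,\af}}{2\pi\ri}\epsilon_{n,0}^{\alpha,\alpha}\oint_{\mathcal{C}_\rho}\frac{u(z)}{w^{2n+1}}\,dw \;+\;\frac{\gamma_n^{\af,\af}}{2\pi\ri}\sum_{l=0}^{\infty}\bigl(\epsilon_{n,2l+2}^{\alpha,\alpha}-\epsilon_{n,2l}^{\alpha,\alpha}\bigr)\oint_{\mathcal{C}_\rho}\frac{u(z)}{w^{2n+2l+3}}\,dw,
\end{equation*}
which is the precise analogue of \eqref{expancoepf1}. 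Estimating each contour integral on $\mathcal{C}_\rho$ by $M\cdot L(\mathcal{C}_\rho)/\rho^{\text{exponent}}=2\pi M\rho^{1-\text{exponent}}$, factoring out the maximum of $|\epsilon_{n,2l+2}^{\alpha,\alpha}-\epsilon_{n,2l}^{\alpha,\alpha}|$ from the remaining sum, and evaluating $\sum_{l\ge0}\rho^{-2l}=\rho^2/(\rho^2-1)$ then delivers the bound \eqref{quadrerr} after cancellation of the extra $\rho^2$ against the $\rho^{-2}$ factor.

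The main technical obstacle I anticipate is justifying the interchange of summation and contour integration, i.e., the termwise validity of the Laurent expansion on $|w|=\rho>1$. This reduces to checking that $Q_n^{\af,\af}/J_n^{\af,\af}$ is analytic in $\{|w|>1\}$ including $w=\infty$, which in turn rests on the two facts already at hand: the zeros of the Gegenbauer polynomial lie in $(-1,1)$ (hence on the circle $|w|=1$ under the Joukowski map), and $Q_n^{\af,\af}(z)$ has a convergent expansion in $w^{-1}$ starting at order $n+1$. Once this uniform convergence on $\mathcal{C}_\rho$ is settled, the rest of the argument is a mechanical transcription of the Jacobi-expansion proof. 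Bounding the telescoping sum by the maximum term (rather than by $\sum_l|\cdot|$) is deliberate: it keeps the estimate $\rho$-independent in its computable part, which is the analogue, in the Gauss-quadrature setting, of the computational bound advocated by Gautschi and Varga \cite{Gautschi83}.
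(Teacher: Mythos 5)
Your proposal is correct and follows essentially the same route as the paper: the same coefficient-matching of $\sum_l \sigma_{n,2l}^{\af,\af}w^{-(n+2l+1)}$ against $J_n^{\af,\af}$ via \eqref{Gexp} to obtain the recursion \eqref{epsilonnj}, the same pullback of \eqref{errorformu} to $\mathcal{C}_\rho$ with $dz=\tfrac12(1-w^{-2})\,dw$, the same telescoping reorganization, and the same bound via the maximum of the differences times the geometric series $\sum_l\rho^{-2l-2}=1/(\rho^2-1)$. Your extra remark justifying termwise integration (analyticity of $Q_n^{\af,\af}/J_n^{\af,\af}$ in $|w|>1$ with decay $w^{-(2n+1)}$ at infinity) is a sound addition that the paper leaves implicit.
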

\begin{proof} A straightforward calculation from \eqref{Qexpa0} (note: $\sigma_{n,2l+1}^{\alpha,\alpha}=0$ for all $l\ge 0$)  and \eqref{Gexp} leads to
\begin{align}\label{ratioexp}
\frac{Q_n^{\alpha,\alpha}(z)}{J_n^{\alpha,\alpha}(z)}=\sum_{l=0}^\infty
\frac{\epsilon_{n,2l}^{\alpha,\alpha}}{w^{2n+2l+1}}
\;\; \text{with}\;\;
\sigma_{n,2l}^{\alpha,\alpha}=A_n^\alpha\sum_{k=0}^{\min\{n,l\}}g_{k}^{\alpha}g_{n-k}^{\alpha}\epsilon_{n,2l-2k}^{\alpha,\alpha},
\end{align}
so solving out  $ \epsilon_{n,2l}^{\alpha,\alpha}$  yields  \eqref{epsilonnj}.

Next, following the same lines  as the derivation of \eqref{expancoepf1}, we infer from \eqref{errorformu} and \eqref{ratioexp} that
 \begin{equation}\label{computbnd}
\begin{split}
\big|E_n^{GG}[u]\big|&
\le\gamma_n^{\alpha,\alpha}\frac{M}{2\pi}\Big|
\sum_{l=0}^{\infty}\epsilon_{n,2l}^{\alpha,\alpha}\oint_{{\mathcal C}_\rho}\frac{1}{w^{2n+2l+1}}\Big(1-\frac{1}{w^2}\Big)\,dw\Big|
\\&\le \gamma_n^{\alpha,\alpha}\frac{M}{2\pi}\Big[\frac{2\pi
\rho}{\rho^{2n+1}}\big|\epsilon_{n,0}^{\alpha,\alpha}\big|+\frac{2\pi
\rho}{\rho^{2n+3}}\sum_{l=0}^{\infty}\big|\epsilon_{n,2l+2}^{\alpha,\alpha}-\epsilon_{n,2l}^{\alpha,\alpha}\big|\frac{1}{\rho^{2l}}\Big]
\\&=\gamma_n^{\alpha,\alpha}\frac{M}{\rho^{2n}}\Big[\big|\epsilon_{n,0}^{\alpha,\alpha}\big|+\frac{1}{\rho^{2}}
\sum_{l=0}^{\infty}\big|\epsilon_{n,2l+2}^{\alpha,\alpha}-\epsilon_{n,2l}^{\alpha,\alpha}\big|\frac{1}{\rho^{2l}}\Big]
\\&\le \gamma_n^{\alpha,\alpha}\frac{M}{\rho^{2n}}\Big[\big|\epsilon_{n,0}^{\alpha,\alpha}\big|+\max_{l\ge0}
\big|\epsilon_{n,2l+2}^{\alpha,\alpha}-\epsilon_{n,2l}^{\alpha,\alpha}\big|\sum_{l=0}^{\infty}\frac{1}{\rho^{2l+2}}
\Big]
\\&= \gamma_n^{\alpha,\alpha}\frac{M}{\rho^{2n}}\Big[\big|\epsilon_{n,0}^{\alpha,\alpha}\big|+\max_{l\ge0}
\big|\epsilon_{n,2l+2}^{\alpha,\alpha}-\epsilon_{n,2l}^{\alpha,\alpha}\big|\frac{1}{\rho^{2}-1}
\Big].
\end{split}
\end{equation}
This completes the proof.
\end{proof}
\begin{rem}\label{bestCheby}
We find from \eqref{epsilonnj} that for  $\alpha=-1/2,$
\begin{equation*}
\epsilon_{n,0}^{-1/2,-1/2}=\frac{2\pi}{\gamma_n^{-1/2,-1/2}},\quad
\big|\epsilon_{n,2l+2}^{-1/2,-1/2}-\epsilon_{n,2l}^{-1/2,-1/2}\big|=\frac{2\pi\delta_{\kappa,0}}{\gamma_n^{-1/2,-1/2}},\;\;
\kappa:=\text{mod}(l+1,n),
\end{equation*}
where $\delta_{\kappa,0}$ is the Kronecker delta. Hence, it follows
from \eqref{computbnd} that
\begin{equation}\label{Chebyquadest}
\big|E_n^{CG}[u]\big|\le \frac{2\pi
M}{\rho^{2n}}\Big[1+\frac{1}{\rho^2}\sum_{j=1}^\infty
\frac{1}{\rho^{2(jn-1)}}\Big]=\frac{2\pi M}{\rho^{2n}-1}, \quad n\ge
1,
\end{equation}
which is the same as  \eqref{chwarchbi} derived in \cite{Chawla68} .\qed
\end{rem}


\begin{rem}\label{bestCheby2nd} We find from \eqref{epsilonnj} that for  $\alpha=1/2,$
\begin{equation}\label{chebmu}
\epsilon_{n,2l}^{1/2,1/2}=\begin{cases}
(-1)^\kappa\dfrac{\pi}{2}\dfrac{1}{\gamma_n^{1/2,1/2}},& \;\;
\text{if}\;\; \kappa:=\text{mod}(l,n+1)=0,1,\\
0,&\;\; \text{otherwise},
\end{cases}
\end{equation}
which implies 
\[
\begin{split}
&\sum_{l=0}^\infty\big|\epsilon_{n,2l+2}^{1/2,1/2}-\epsilon_{n,2l}^{1/2,1/2}\big|\frac{1}{\rho^{2l}}
=\big|\epsilon_{n,2}^{1/2,1/2}-\epsilon_{n,0}^{1/2,1/2}\big|+\sum_{j=1}^\infty\big|\epsilon_{n,2j(n+1)}^{1/2,1/2}\big|\frac{1}{\rho^{2j(n+1)-2}}
\\&\qquad+\sum_{j=1}^\infty\Big(
\big|\epsilon_{n,2j(n+1)+2}^{1/2,1/2}-\epsilon_{n,2j(n+1)}^{1/2,1/2}\big|\frac{1}{\rho^{2j(n+1)}}
+\big|\epsilon_{n,2j(n+1)+2}^{1/2,1/2}\big|\frac{1}{\rho^{2j(n+1)+2}}\Big)
\\&\qquad=\dfrac{\pi}{2}\dfrac{1}{\gamma_n^{1/2,1/2}}\Big(2+(\rho+\rho^{-1})^2\sum_{j=1}^\infty\frac{1}{\rho^{2j(n+1)}}\Big)
=\frac{\pi}{2}\frac{1}{\gamma_n^{1/2,1/2}}\Big(2+\frac{(\rho+\rho^{-1})^2}{\rho^{2n+2}-1}\Big).
\end{split}
\]
Hence, it follows from \eqref{computbnd} that for the Chebyshev-Gauss quadrature of the second kind,
\begin{equation}\label{quadcheb2nd}
  \begin{split}
  \big|E_n^{GG}[u]\big|&\le
  \frac{\pi
  M}{2\rho^{2n}}\bigg(1+\frac{1}{\rho^2}\Big(2+\frac{(\rho+\rho^{-1})^2}{\rho^{2n+2}-1}\Big)\bigg)
  =\frac{\pi
  M(\rho^2+2+\rho^{-2n-4})}{2(\rho^{2n+2}-1)}.
\end{split}
\end{equation}
Note that Hunter \cite[(4.8)]{Hunter1995} obtained the following estimate by a delicate technique:
\begin{equation}\label{Hunterquadcheb2nd}
\big|E_n^{GG}[u]\big|\le \frac{\pi
M(\rho^2+2+\rho^{-2})}{2(\rho^{2n+2}-1)}.
\end{equation}
We see that  \eqref{quadcheb2nd} is sharper.  \qed
\end{rem}

For general $\alpha>-1,$ the derivation of  an explicit
bound  for
\begin{equation}\label{maxboud}
\Theta_n^{\af}:=\max_{l \ge
0} \theta_{n,l}^\af,\quad \theta_{n,l}^\af:=\gamma_n^{\af,\af}\big|\epsilon_{n,2l+2}^{\af,\af}-\epsilon_{n,2l}^{\af,\af}\big|,\; \;\; n\ge 1,
\end{equation}
 seems nontrivial. We have only   empirical results bases on
computation. Some indications are listed as follows.
\begin{itemize}
\item[(i)] Observe from \eqref{chebmu} that for fixed $n$, $\{\theta_{n,l}^{1/2}\}_{l\ge 0}$ are   $(n+1)$-periodic (see Figure \ref{thetagraph} (a)), and the maximum is attained at $l=j(n+1), j=0,1,\cdots$. We compute ample samples of $n,l$ and $\alpha,$ and find very similar ``periodic"  behaviors (see Figure \ref{thetagraph} (b)-(c) for $\alpha=0,1$).

\item[(ii)] Another interesting empirical observation is that for fixed $\alpha,$
the maximum value  $\Theta_n^{\af}$ converges to a constant value, and it
  decreases
as $\alpha$ increases (see Figure \ref{thetagraph} (d)). Note that
for the Legendre case, $\Theta_n^{0}\approx 4.$
\end{itemize}

%

\begin{figure}[!h]
\subfigure[$\theta_{36,l}^{1/2}$]{
\begin{minipage}[t]{0.45\textwidth}
\centering
\rotatebox[origin=cc]{-0}{\includegraphics[width=0.98\textwidth]{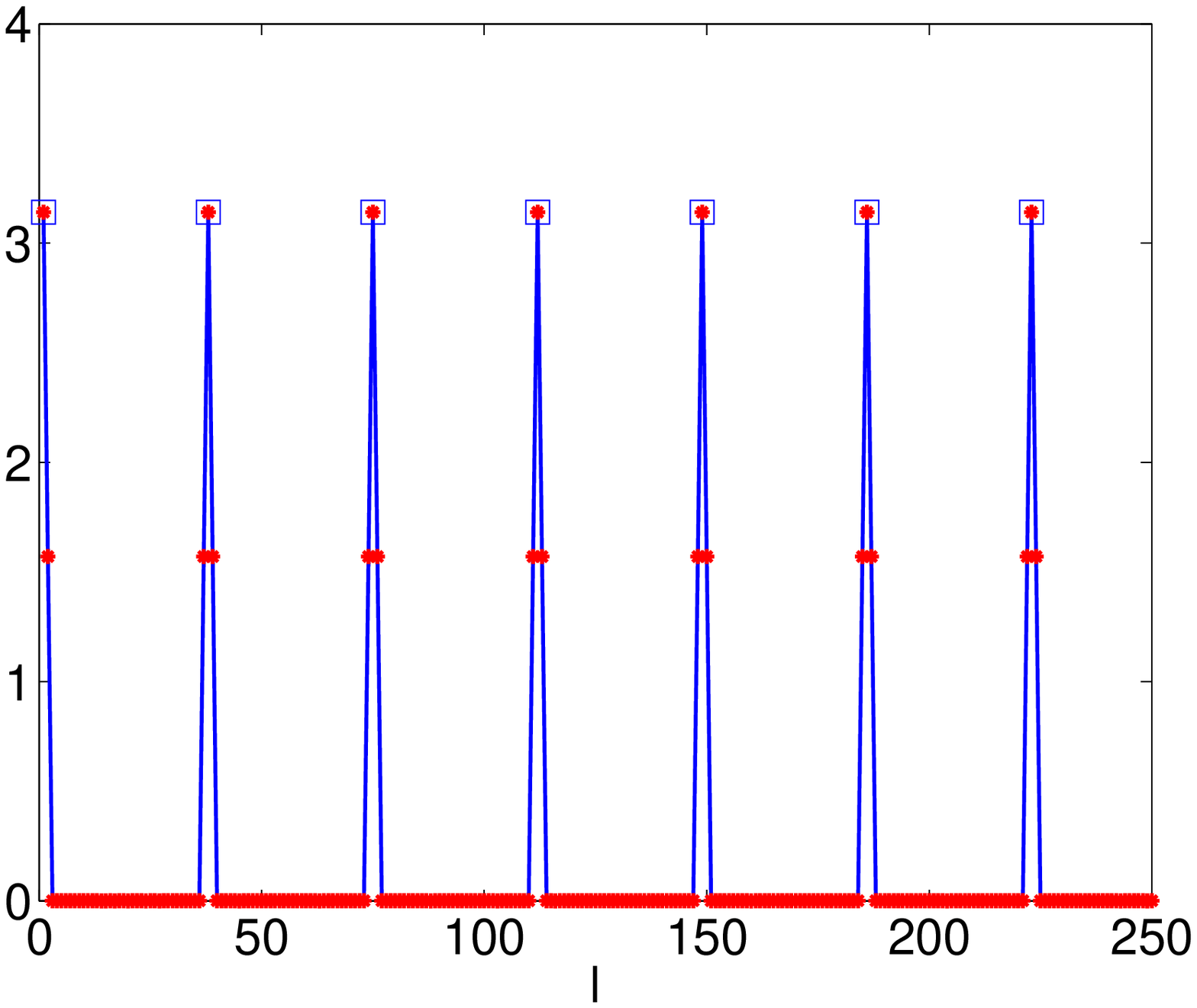}}\label{chebymu}
\end{minipage}}
\subfigure[$\theta_{36,l}^{0}$]{
\begin{minipage}[t]{0.45\textwidth}
\centering
\rotatebox[origin=cc]{-0}{\includegraphics[width=0.98\textwidth]{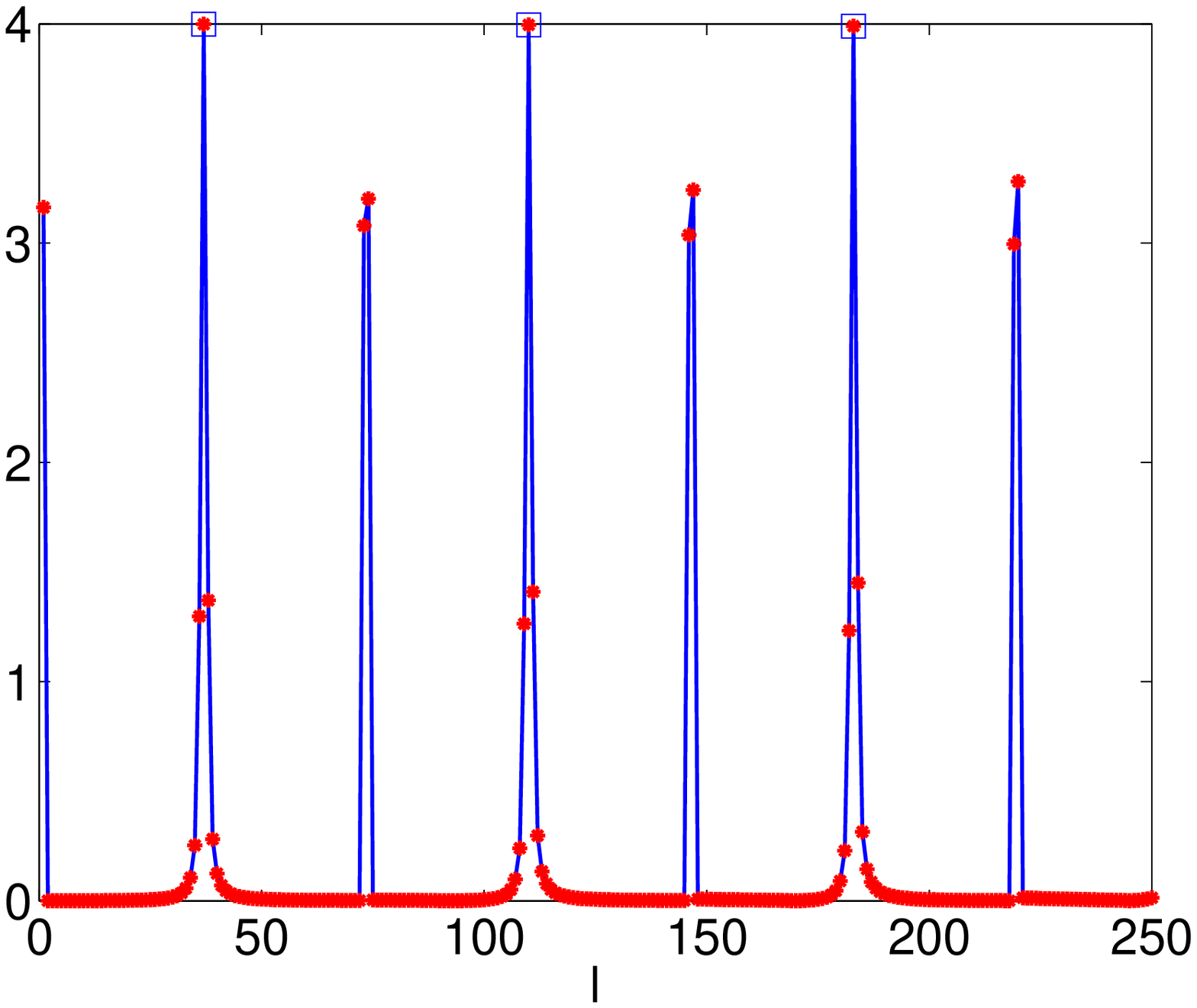}}\label{legmu1}
\end{minipage}}
\subfigure[$\theta_{36,l}^{1}$]{
\begin{minipage}[t]{0.45\textwidth}
\centering
\rotatebox[origin=cc]{-0}{\includegraphics[width=0.98\textwidth]{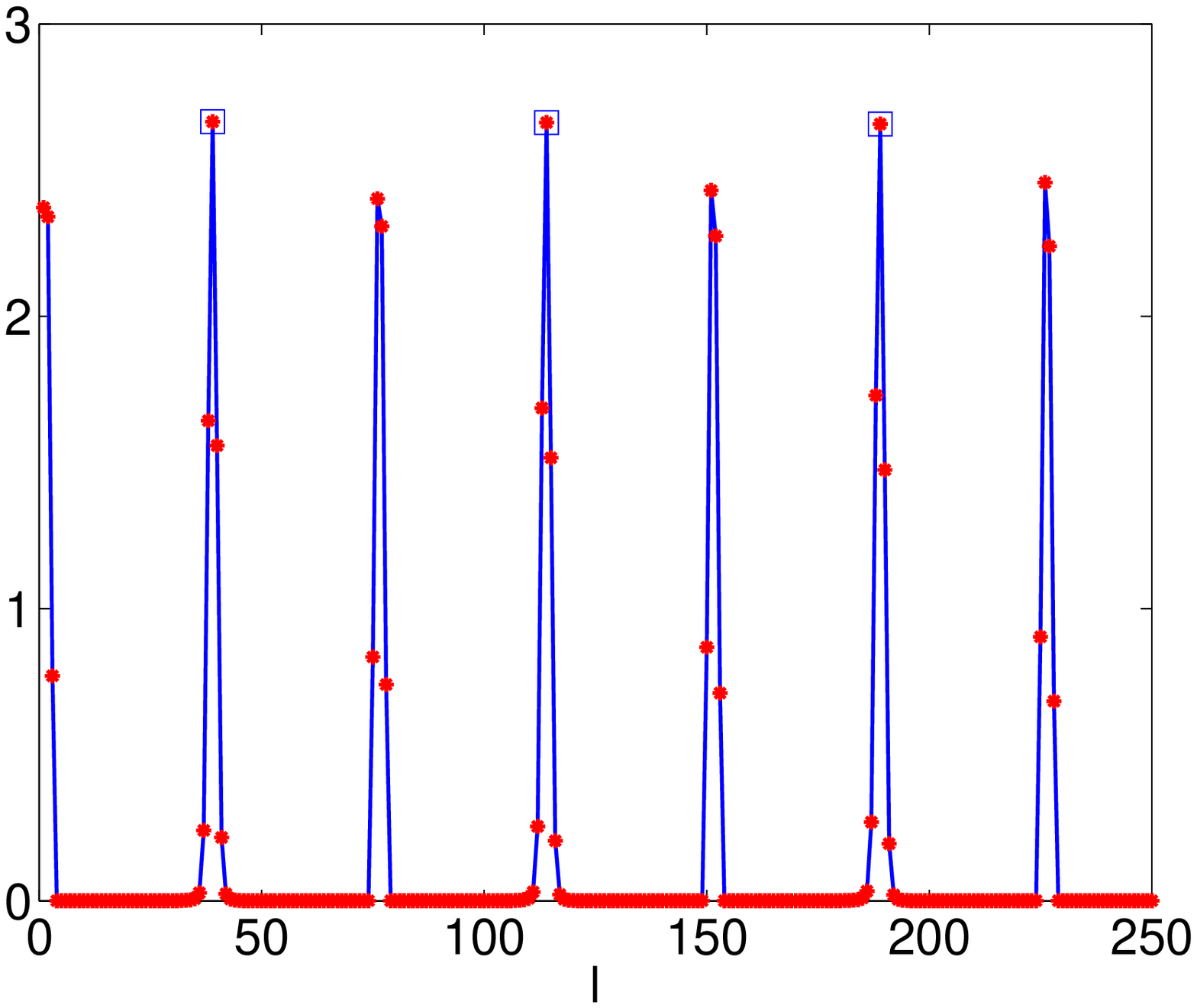}}\label{af1mu}
\end{minipage}}
\subfigure[$\Theta_n^{\af}$ with various $\alpha$]{
\begin{minipage}[t]{0.47\textwidth}
\centering
\rotatebox[origin=cc]{-0}{\includegraphics[width=0.98\textwidth]{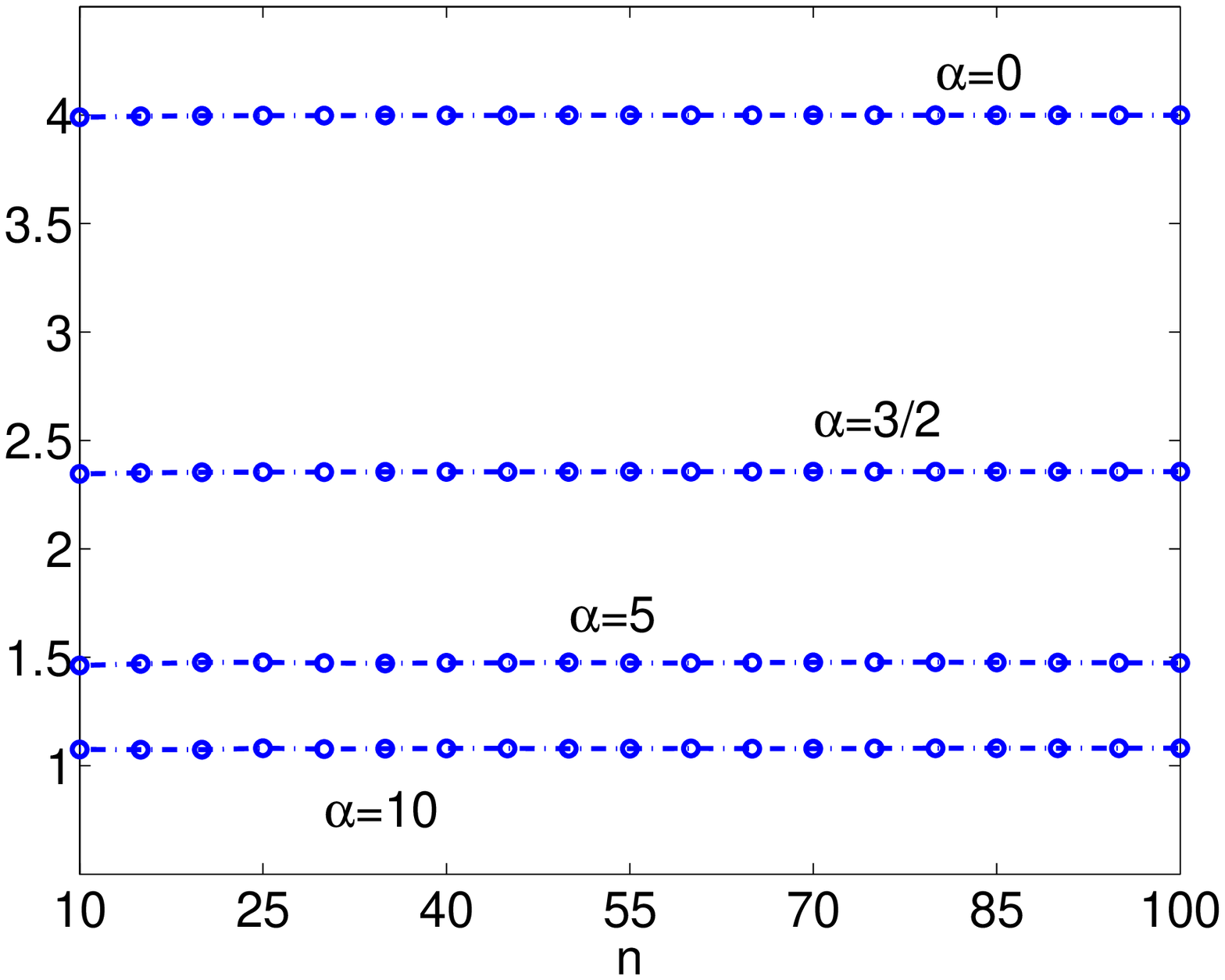}}\label{almaxbnd}
\end{minipage}}
\caption{\small (a)-(c): Profiles of $\theta_{n,l}^{\alpha}$ with $n=36, \alpha=1/2, 0, 1$ and
$0\le l\le 250.$ We mark by  ``{\tiny$\square$}'' the location of the maximum value $\Theta_n^{\af}$ is attached. 
(d): The maximum value $\Theta_n^{\af}$ with $\alpha=0,3/2,5,10$ and $10\le n\le 100,$ where we compute
$\{\theta_{n,l}^{\alpha}\}$ for
$l$ up to $1000.$}\label{thetagraph}
 \end{figure}



Now, we turn to the second approach.  The main result  is summarized below.
\begin{thm}\label{quadrathmIIgegen} For any $u\in {\mathcal A}_\rho$ with $\rho>1,$ and for $\af>-1$ and $\af\not=-1/2,$  we have
\begin{equation}\label{quadraturest2}
\big|E_n^{GG}[u]\big|\le \frac{C_n
M\sqrt{\pi}}{\rho^{2n}}\bigg(\frac{\sqrt{\pi }}{2^{2\af}}+
\frac{\Gamma(\alpha+1)}{\sqrt{\Gamma(2\alpha+2)}}\frac{2\sqrt{2}}{\rho^2-1}
\bigg)
\begin{cases}
(1+\rho^{-2})^{\alpha+1/2},\; &\af>-1/2,\\
(1-\rho^{-2})^{\alpha+1/2},\; & \af<-1/2,
\end{cases}
\end{equation}
and  in particular, for the Legendre case,
\begin{equation}\label{quadrlegenest}
\big|E_n^{LG}[u]\big|\le \frac{C_n
M\pi\sqrt{1+\rho^{-2}}}{\rho^{2n}}\Big(1+\frac{1}{2(\rho^2-1)}\Big),
\end{equation}
where the constant $C_n\approx 1$.
\end{thm}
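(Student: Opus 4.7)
The plan is to invoke the identity referred to as \eqref{quadII} in the theorem statement, which expresses the quadrature remainder in terms of a Gegenbauer-type expansion coefficient at index $2n$ (reflecting the polynomial-exactness threshold of the $n$-point Gauss rule), and to feed into that identity the sharp coefficient bound \eqref{asympJacobi2} already proved in Remark \ref{astotpic}. This mechanism explains the structural parallel between the bracketed factor of \eqref{quadraturest2} and the bracketed factor of \eqref{asympJacobi2}: they are literally the same constant, with the prior $n$ playing the role of $2n$ and hence accounting for the upgrade from $\rho^{-n}$ to $\rho^{-2n}$. Only the geometric multiplier $(1\pm\rho^{-2})^{\alpha+1/2}$ has to be produced by the argument.

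To derive \eqref{quadII}, I would start from the contour representation \eqref{errorformu}, change variables via $z=(w+w^{-1})/2$ on $\mathcal C_\rho$, and use the explicit formula \eqref{Gexp} of Lemma \ref{lemma1.4} for $J_n^{\alpha,\alpha}(z)$ on $\mathcal E_\rho$ --- which is precisely why we must assume $\alpha\neq -1/2$, since the Chebyshev case is treated separately in Remark \ref{bestCheby}. Together with the Laurent expansion \eqref{Qexpa0} of $Q_n^{\alpha,\alpha}(z)$, the leading contributions combine into a factor $w^{-(2n+1)}$ that exactly matches the kernel of the Gegenbauer coefficient integral \eqref{expancoef} at index $2n$; invoking \eqref{gegenest1}--\eqref{asympJacobi2} then delivers the first bracketed factor and the $\rho^{-2n}$ decay in \eqref{quadraturest2}.

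The additional factor $(1\pm\rho^{-2})^{\alpha+1/2}$ should emerge from two geometric inputs collected during the change of variables: the Jacobian $dz=\tfrac12(1-w^{-2})\,dw$ and the boundary evaluation $\omega^{\alpha,\alpha}(z)=(1-z^2)^{\alpha}=[\rho^2|1-w^{-2}|^2/4]^{\alpha}$ on $\mathcal C_\rho$, whose combined $w$-dependence is $|1-w^{-2}|^{2\alpha+1}$. Since $|1-w^{-2}|\in[1-\rho^{-2},\,1+\rho^{-2}]$ as $w=\rho e^{i\theta}$ traverses $\mathcal C_\rho$, the maximum is $(1+\rho^{-2})^{2\alpha+1}$ when $\alpha>-1/2$ and the dominant extremum is $(1-\rho^{-2})^{2\alpha+1}$ when $-1<\alpha<-1/2$; a subsequent Cauchy--Schwarz step --- equivalently, a symmetric half-weight splitting $\omega^{\alpha,\alpha}=\omega^{\alpha/2,\alpha/2}\cdot\omega^{\alpha/2,\alpha/2}$ --- reduces the exponent from $2\alpha+1$ to $\alpha+1/2$. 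The Legendre case \eqref{quadrlegenest} then follows by specializing $\alpha=0$ and substituting the sharper Legendre-only bound \eqref{legenest4} of Theorem \ref{Th:legexp} in place of \eqref{asympJacobi2}, which collapses the constant to $\pi(1+1/(2(\rho^2-1)))$ and produces the geometric factor $\sqrt{1+\rho^{-2}}$ corresponding to $\alpha+1/2=1/2$.

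The main obstacle is the fine bookkeeping that produces the exponent $\alpha+1/2$ exactly (rather than, say, $2\alpha+1$) and correctly selects the sign of $\rho^{-2}$ according to whether $\alpha\gtrless -1/2$; this requires a delicate Cauchy--Schwarz (or weight-splitting) step on $\mathcal C_\rho$ matched to the ellipse geometry, together with the case split at $\alpha=-1/2$. The remaining ingredients --- estimating $\gamma_n^{\alpha,\alpha}$ via Lemma \ref{lemGammaratio}, tracking the benign constant $C_n=1+O(n^{-1})$, and summing the harmless geometric tail in $\rho^{-2}$ --- are routine and parallel to the computations already carried out in Section \ref{sect2}.
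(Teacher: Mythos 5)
Your high-level skeleton is right --- the paper does prove \eqref{quadraturest2} by combining the relation \eqref{quadII} with the coefficient bound \eqref{asympJacobi2} --- but the two mechanisms you propose for the key features of the bound are not how the argument goes, and as described they do not work. First, \eqref{quadII} does \emph{not} produce the Gegenbauer coefficient at index $2n$: it reads $|E_n^{GG}[u]|\le \gamma_n^{\af,\af}\,|\hat u_n^{\af,\af}|/\min_{z\in\mathcal E_\rho}|J_n^{\af,\af}(z)|$, obtained by pulling $1/J_n^{\af,\af}(z)$ out of the contour integral in \eqref{errorformu} via its minimum modulus on $\mathcal E_\rho$ and recognizing $\frac{1}{\pi\ri}\oint_{\mathcal E_\rho}Q_n^{\af,\af}(z)u(z)\,dz=\hat u_n^{\af,\af}$ from \eqref{Jcoe3a}. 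The coefficient that appears is at index $n$, contributing only one factor $\rho^{-n}$; your claim that ``$n$ plays the role of $2n$'' and thereby upgrades $\rho^{-n}$ to $\rho^{-2n}$ has no basis --- matching the leading power $w^{-(2n+1)}$ of the Laurent series of $Q_n/J_n$ with the kernel of \eqref{expancoef} at index $2n$ does not let you invoke \eqref{gegenest1} at index $2n$, because the Laurent coefficients $\epsilon_{n,2l}^{\af,\af}$ of \eqref{epsilonnj} are not the $\sigma_{2n,j}^{\af,\bt}$ (indeed, controlling the $\epsilon$'s is exactly the unresolved difficulty of the paper's \emph{first} approach, Theorem \ref{quadrathmI}).

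The missing ingredient is a lower bound for the Gegenbauer polynomial on the ellipse: from \cite[(4.6)]{XieWangZhao2011} (a consequence of the explicit formula \eqref{Gexp}), $|J_n^{\af,\af}(z)|\ge C_n\,2^{2\af}\rho^{n}(\sqrt{\pi n})^{-1}(1\pm\rho^{-2})^{-\af-1/2}$ on $\mathcal E_\rho$, with the sign chosen according to $\af\gtrless -1/2$. This single estimate supplies \emph{both} the second factor of $\rho^{-n}$ (hence the overall $\rho^{-2n}$) \emph{and} the factor $(1\pm\rho^{-2})^{\af+1/2}$ with its sign selection; the $\sqrt{n}$ from \eqref{asympJacobi2} then cancels against $\gamma_n^{\af,\af}/\min|J_n^{\af,\af}|\sim \sqrt{\pi}\,n^{-1/2}\rho^{-n}(1\pm\rho^{-2})^{\af+1/2}$. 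Your alternative derivation of $(1\pm\rho^{-2})^{\af+1/2}$ from the Jacobian $dz=\tfrac12(1-w^{-2})\,dw$, the weight $(1-z^2)^\af$ on the contour, and a Cauchy--Schwarz halving of the exponent is not viable: the contour representation \eqref{errorformu} contains no weight factor $\omega^{\af,\af}(z)$ on $\mathcal E_\rho$ (the weight sits inside the real integral defining $Q_n^{\af,\af}$ in \eqref{Qexpa0}), so there is nothing on the contour to split, and taking the maximum of $|1-w^{-2}|^{2\af+1}$ over $\mathcal C_\rho$ would in any case select the wrong extremum for $\af<-1/2$ without the polynomial lower bound doing the case analysis for you. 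The Legendre specialization \eqref{quadrlegenest} you describe correctly (use \eqref{legenest4} in the numerator), but it still requires the same lower bound for $|J_n^{0,0}(z)|$ with $\af=0$.
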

\begin{proof} We carry out the proof by using the important relation, due to  \eqref{errorformu} and
\eqref{Jcoe3a}:
\begin{equation}\label{quadII}
\begin{split}
\big|E_n^{GG}[u]\big|&\le\frac{\gamma_n^{\alpha,\alpha}}{\min_{z\in\mathcal
E_\rho}|J_n^{\alpha,\alpha}(z)|}\Big|\frac{1}{\pi\ri}\oint_{\mathcal
E_\rho}Q_n^{\alpha,\alpha}(z)u(z)dz\Big|\\
&\overset{\re{Jcoe3a}}=\frac{\gamma_n^{\alpha,\alpha}|\hat u_n^{\alpha,\alpha}|}{\min_{z\in\mathcal
E_\rho}|J_n^{\alpha,\alpha}(z)|}.
\end{split}
\end{equation}
Since the numerator has been estimated in  Theorem \ref{jacobithm} (also see \eqref{asympJacobi2}), it suffices to deal with the denominator.

By \cite[(4.6)]{XieWangZhao2011}, we have
\begin{equation}\label{newconse}
\begin{split}
|J_{n}^{\alpha,\alpha}(z)|&\ge
C_n|A_n^{\alpha}|\frac{n^{\alpha-1/2}\rho^{n}}{|\Gamma(\alpha+1/2)|}
\begin{cases}
(1+\rho^{-2})^{-\alpha-1/2},\quad & {\rm if}\;\; \af>-1/2,\\
(1-\rho^{-2})^{-\alpha-1/2},\quad & {\rm if}\;\; \af<-1/2,
  \end{cases}
  \\&\ge
  C_n\frac{2^{2\af}\rho^{n}}{\sqrt{\pi n}}
\begin{cases}
(1+\rho^{-2})^{-\alpha-1/2},\quad & {\rm if}\;\; \af>-1/2,\\
(1-\rho^{-2})^{-\alpha-1/2},\quad & {\rm if}\;\; \af<-1/2,
  \end{cases}
  \end{split}
\end{equation}
where  $C_n\approx 1.$ Note that in the last step, we dealt with $|A_n^\af|$  as
\[
|A_n^\af|\overset{\re{ajexp0}}=\frac{|\Gamma(2\af+1)|}{\Gamma(\af+1)}\frac{\Gamma(n+\af+1)}{\Gamma(n+2\af+1)}
=\frac{|\Gamma(\af+1/2)|}{2^{-2\af}\sqrt{\pi}}\frac{\Gamma(n+\af+1)}{\Gamma(n+2\af+1)}\ge
C_n\frac{|\Gamma(\af+1/2)|}{2^{-2\af}\sqrt{\pi}n^\af},
\]
where  we used Lemma \ref{lemGammaratio} and the property of Gamma
function (see \cite{Abr.I64}):
\[
\Gamma(z)\Gamma(z+1/2)=2^{1-2z}\sqrt{\pi}\,\Gamma(2z).
\]
Therefore, a combination of \eqref{gammacoef},  \eqref{asympJacobi2} and \eqref{quadII}-\eqref{newconse} leads
to the desired result.

Using  the refined estimate \eqref{legenest4}, yields \eqref{quadrlegenest}.
\end{proof}

\section{Numerical results and comparisons}\label{sect4}

In this section, we present various numerical results to show the tightness of the bounds derived in this paper,
and to compare them  with other existing ones mentioned  in the previous part.

 In the first example, we purposely choose the Chebyshev and Legendre expansions with known expansion coefficients:
\begin{equation} \label{generatingfunc}
u_1(x)=\frac{3}{5-4x}=T_0(x)+\sum_{n=1}^\infty \frac{T_n(x)}{2^{n-1}},
\quad u_2(x)=\frac{2}{\sqrt{5-4x}}=\sum_{n=0}^\infty \frac{L_n(x)}{2^{n}},
\end{equation}
which follow from   generating functions of Chebyshev and Legendre polynomials (cf. \cite{szeg75}).

Note that the function  $u_1$  has a simple pole at $z=5/4,$ so the semi-major axis (cf. \eqref{semiaxis}) should satisfy
\begin{equation*}\label{u1est0}
1<a=(\rho+\rho^{-1})/2<5/4\;\;\Rightarrow\;\; 1<\rho<2.
\end{equation*}
One also verifies that
\begin{equation*}\label{u1est}
M=\max_{z\in {\mathcal E_\rho}}|u_1(z)|=\frac {3\rho}{(2\rho-1)(2-\rho)}.
\end{equation*}
Then the estimate \eqref{chebyest3} reduces to
\begin{equation*}\label{u1est2}
\hat u_n^C=\frac 1 {2^{n-1}}\le \frac {6}{(2\rho-1)(2-\rho)\rho^{n-1}}:=B_n^C(\rho),\quad 1<\rho<2,\;\; n\ge 1.
\end{equation*}
Similarly, for the Legendre expansion of $u_2,$ the result \eqref{legenest4} becomes
\begin{equation*}\label{u2est2}
\hat u_n^{0,0}=\frac 1 {2^{n}}\le
\frac{\sqrt{\pi
n}}{\rho^{n}}\Big(1+\frac{n+2}{2n+3}\frac{1}{\rho^2-1}\Big)\exp\Big(\frac{8n-1}{12n(2n-1)}\Big)\sqrt{\frac {4\rho}{(2\rho-1)(2-\rho)}} :=B_n^L(\rho),
\end{equation*}
for $1<\rho<2 $ and $n\ge 1.$

We take $\rho=1.98,$ and plot the exact coefficients $\hat u_n^C $ and $\hat u_n^{0,0}$, and the bounds $B_n^C$ and $B_n^L$  in  Figure \ref{figchebleg} (a) and (b), respectively. Actually, the bound for the Chebyshev case
(see \eqref{chebmars}) can be considered as one benchmark for illustrating tightness of the upper bound. Indeed,  the result for the Legendre case stated
in Theorem \ref{Th:legexp} seems as sharp as  that for the Chebyshev case.

\begin{figure}[!h]
\subfigure[Chebyshev case]{
\begin{minipage}[t]{0.45\textwidth}
\centering
\rotatebox[origin=cc]{-0}{\includegraphics[width=0.98\textwidth]{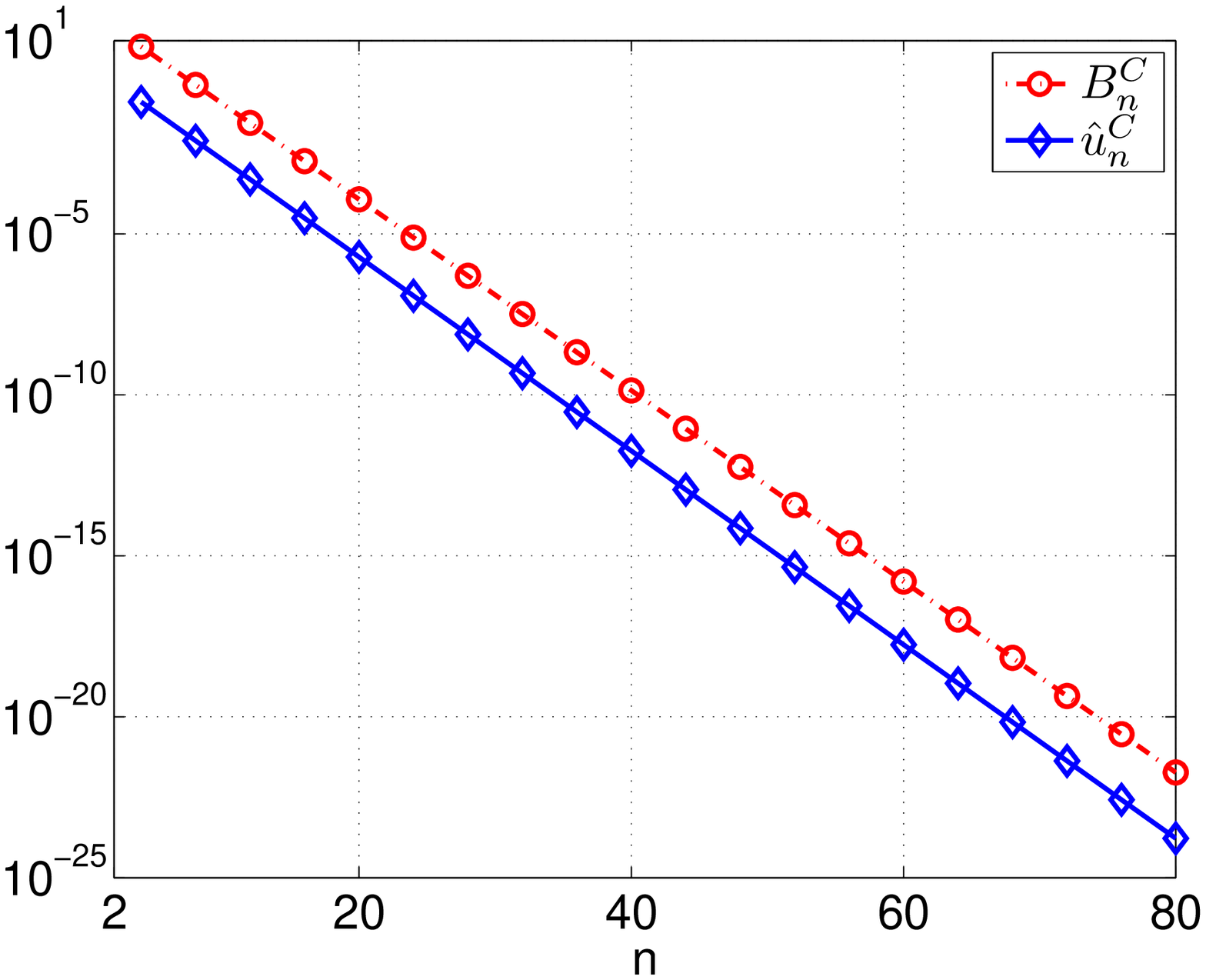}}\label{chebyexcoefbnd}
\end{minipage}}
\subfigure[Legendre case]{
\begin{minipage}[t]{0.45\textwidth}
\centering
\rotatebox[origin=cc]{-0}{\includegraphics[width=0.98\textwidth]{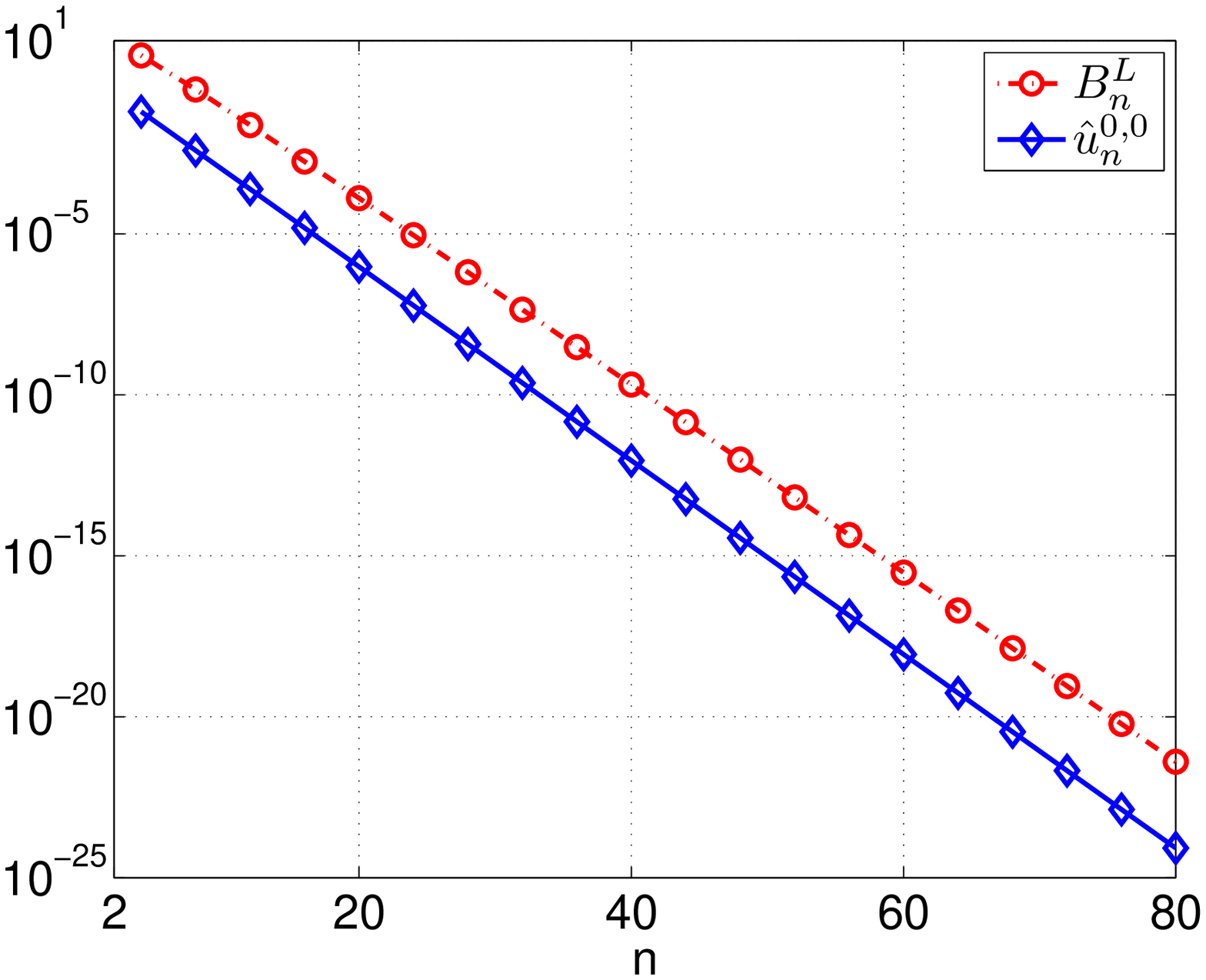}}\label{legenexcoefbnd}
\end{minipage}}
\caption{\small Expansion coefficients of $u_1,u_2$ in (\ref{generatingfunc}) against their error bounds.}\label{figchebleg}
 \end{figure}

\begin{figure}[!h]
\subfigure[$e_n(\rho)$]{
\begin{minipage}[t]{0.47\textwidth}
\centering
\rotatebox[origin=cc]{-0}{\includegraphics[width=1\textwidth]{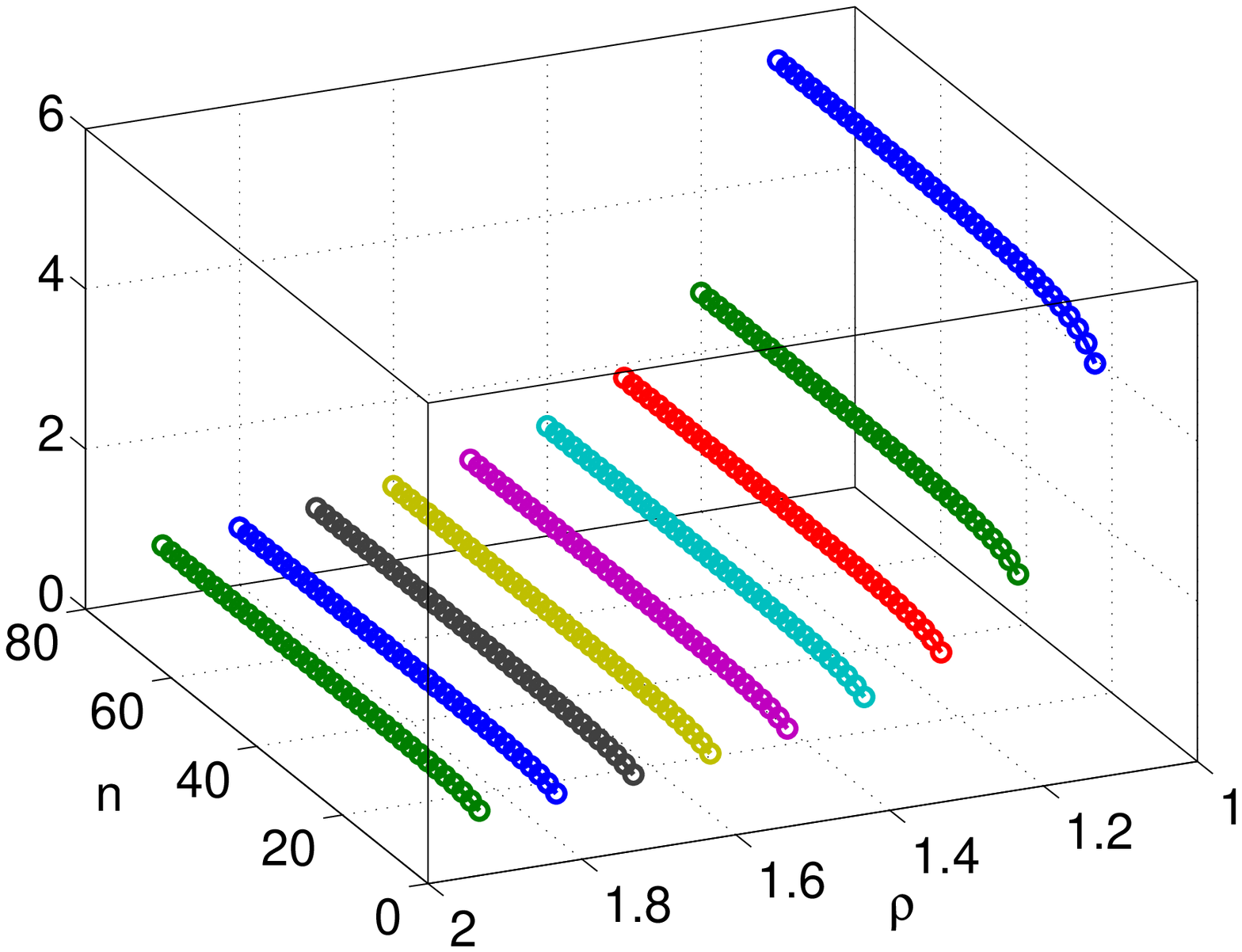}}\label{Legenbndcmp1}
\end{minipage}}
\subfigure[$e_n(\rho)$ with $\rho$ near $1$]{
\begin{minipage}[t]{0.47\textwidth}
\centering
\rotatebox[origin=cc]{-0}{\includegraphics[width=0.95\textwidth]{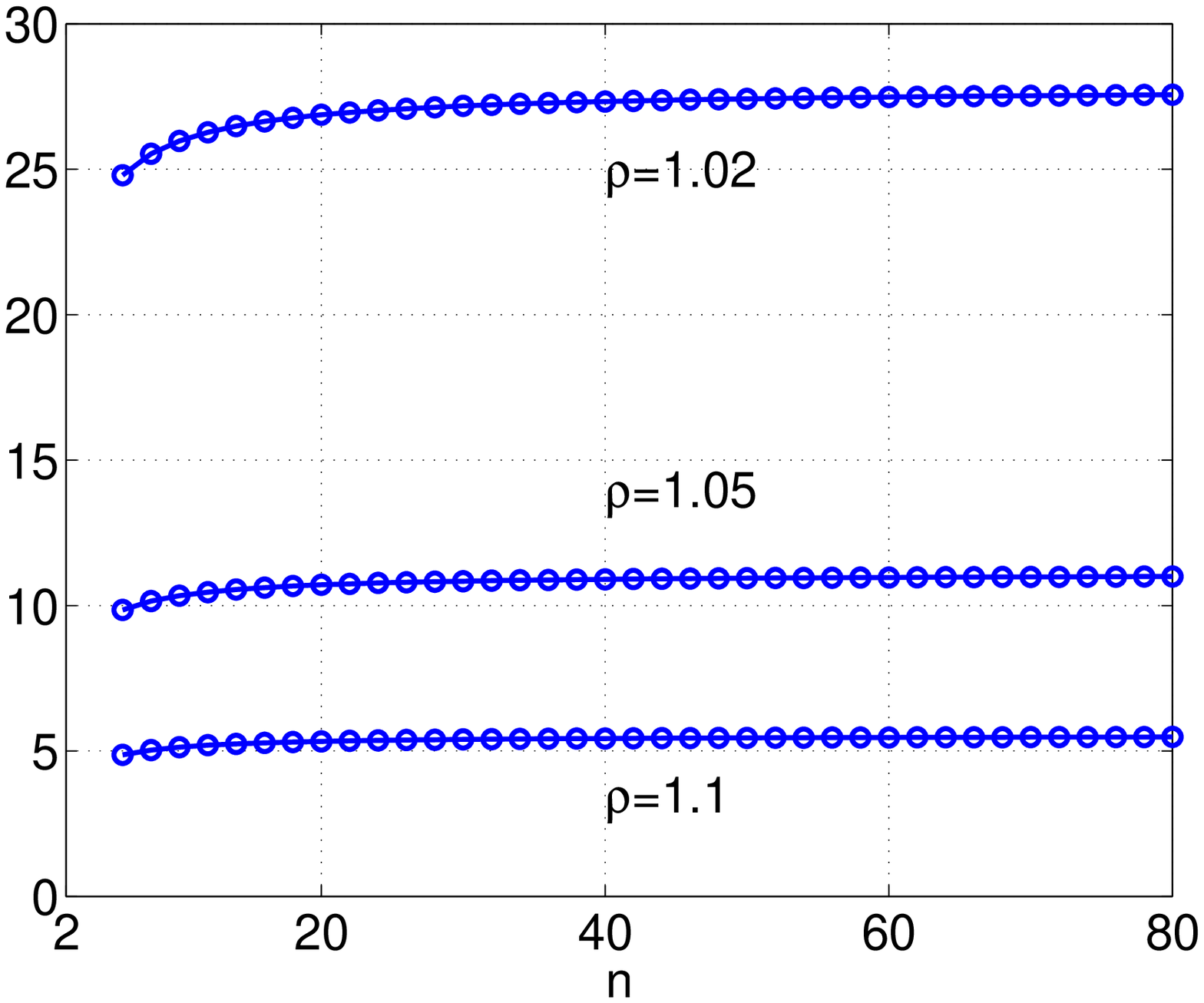}}\label{legentrunbndcmp}
\end{minipage}}
\caption{\small (a): Comparison of error bounds for Legendre
expansions in (\ref{legenest4}) and (\ref{xianglegenbnd}).
 (b): Samples of $e_n(\rho)$ for  $\rho$ close to $1.$ }\label{legcmpbndsa}
 \end{figure}


Next, we compare the bounds for the Legendre expansion coefficients  in Theorem \ref{Th:legexp} 
and \eqref{xianglegenbnd} (obtained by \cite{xiang2012error}). For clarity, we drop  the common part $M\sqrt n /\rho^n,$ and denote the remaining factors in the upper
 bounds by
\begin{equation*}\label{casesdd}
\begin{split}
& b_n(\rho)\overset{(\ref{legenest4})}=\sqrt \pi \Big(1+\frac{n+2}{2n+3}\frac{1}{\rho^2-1}\Big)\exp\Big(\frac{8n-1}{12n(2n-1)}\Big),\quad
   \tilde  b(\rho) \overset{(\ref{xianglegenbnd})} =   2 \Big(1+\frac 1 {\rho^2-1}\Big).
   \end{split}
\end{equation*}
In Figure \ref{legcmpbndsa} (a), we plot the difference
$e_n(\rho):=\tilde  b(\rho)-b_n(\rho)$ for various $\rho$ and $1\le
n\le 80.$ We see that $e_n(\rho)>0$ and  the difference is of
magnitude around $6$, when  $\rho$ is close to $1$. Moreover, for
fixed $\rho,$ the difference is roughly  a constant for slightly
large $n.$ In Figure \ref{legcmpbndsa} (b), we plot some sample
$e_n(\rho)$ for $\rho$ close to 1, we see that our bound is much
sharper.


\begin{figure}[!h]
\subfigure[$\alpha=1,\beta=0$]{
\begin{minipage}[t]{0.47\textwidth}
\centering
\rotatebox[origin=cc]{-0}{\includegraphics[width=1\textwidth]{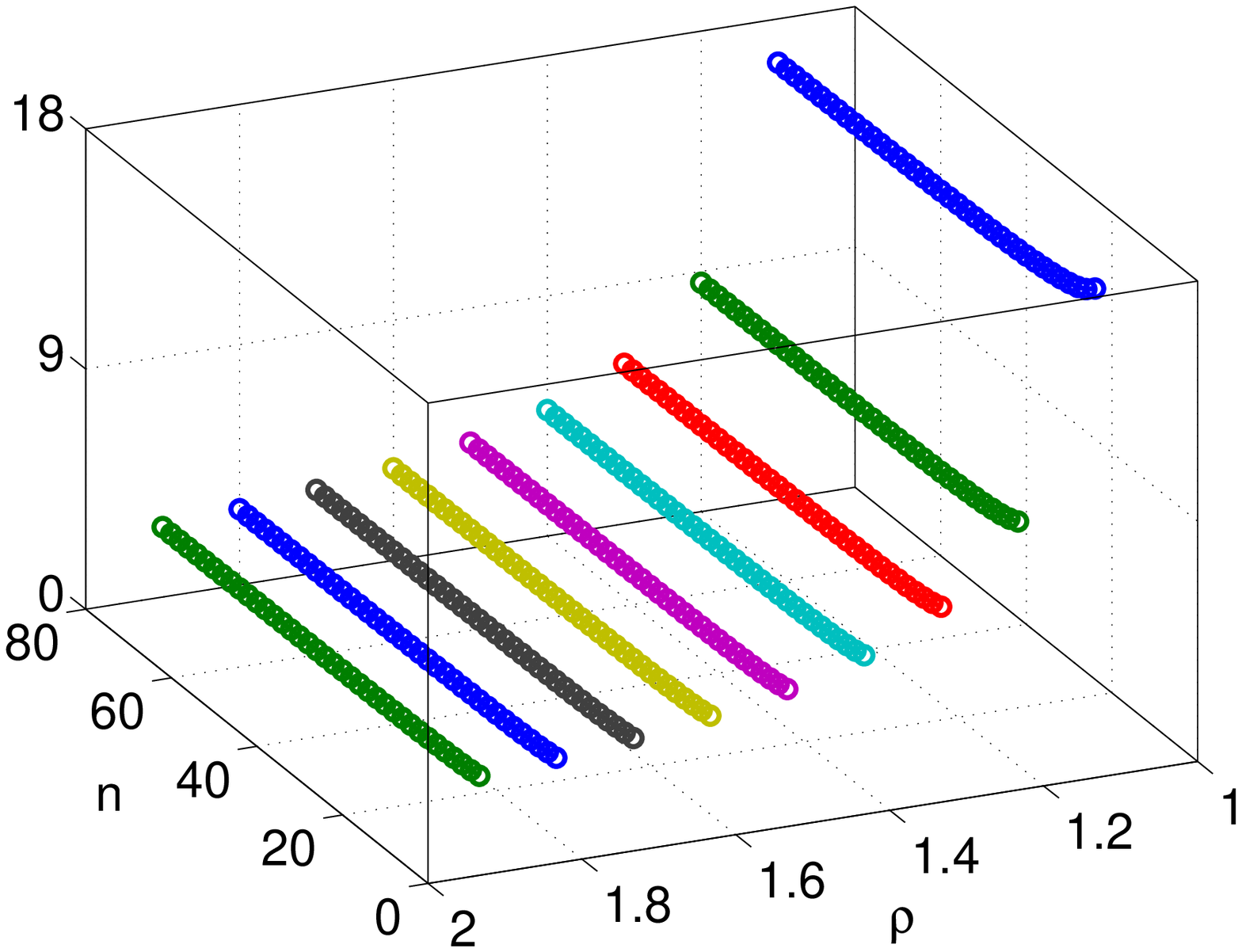}}\label{jacobibndcmp1}
\end{minipage}}
\subfigure[$\alpha=\beta=2$]{
\begin{minipage}[t]{0.47\textwidth}
\centering
\rotatebox[origin=cc]{-0}{\includegraphics[width=1\textwidth]{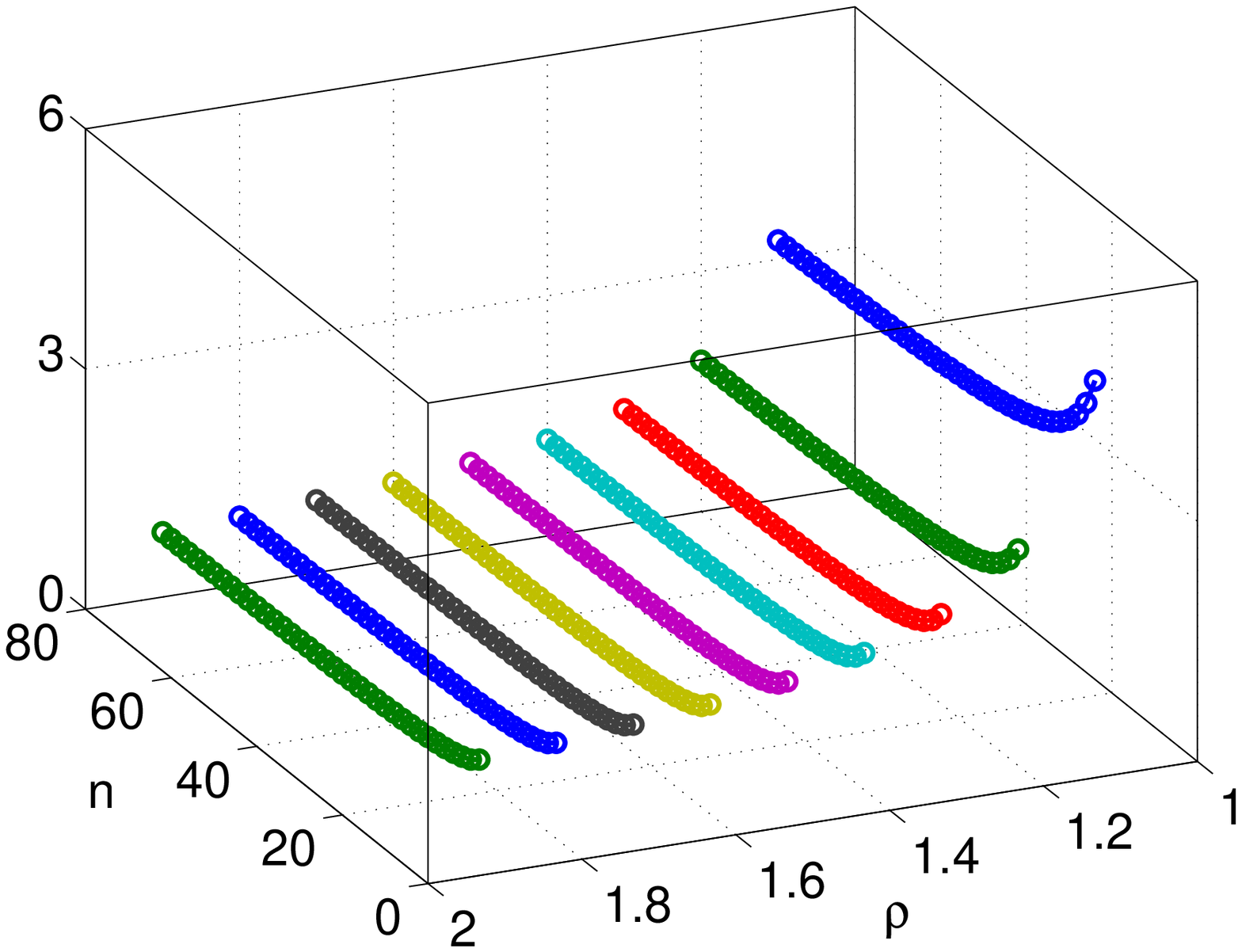}}\label{Gegenbndcmp1}
\end{minipage}}
\caption{\small (a): Comparison of error bounds for Jacobi expansion
with $\af=1,\beta=0$ in \re{xiangres} and \re{legenest10}. (b):
Comparison of error bounds for  Gegenbauer expansion with
$\af=\bt=2$ in \re{xiangres} and \re{gegenest1}.}\label{figcmpbnds3}
 \end{figure}

We next make a similar comparison of bounds for  Jacobi and
Gegenbauer expansions. For example, for $\alpha=1$ and $\beta=0,$ we
extract the factors in \eqref{xiangres} and \eqref{legenest10} by
dropping   $M\sqrt n /\rho^n.$  We plot in Figure
\ref{figcmpbnds3} (a) the difference of two remaining parts (i.e.,
that of  \eqref{xiangres} subtracts that of  \eqref{legenest10}).
Once again, our bound is much tighter.  Likewise, we depict in
Figure \ref{figcmpbnds3} (b) the extracted bounds from
\eqref{xiangres} and \eqref{gegenest1} with $\af=\bt=2.$
The situation is mimic to the Legendre case, where the bounds obtained in this paper are sharper.

Finally, we turn to the comparison of error bounds for the Gegenbauer-Gauss quadrature remainder.
For $\alpha=1/2$, we extract the factors in \eqref{quadcheb2nd} and
\eqref{Hunterquadcheb2nd} by dropping  $M/\rho^{2n}$ as before.  We plot in
Figure \ref{figquadcmp} (a) the difference of two remaining parts
in \eqref{Hunterquadcheb2nd} and in \eqref{quadcheb2nd}). Once again, our bound is much tighter.
Likewise, we depict in Figure \ref{figquadcmp} (b) the extracted
bounds from \eqref{Huntergenbnd} and \eqref{quadrerr} with $\af=2,$ and observe similar behaviors.

\begin{figure}[!h]
\subfigure[$\af=1/2$]{
\begin{minipage}[t]{0.47\textwidth}
\centering
\rotatebox[origin=cc]{-0}{\includegraphics[width=1\textwidth]{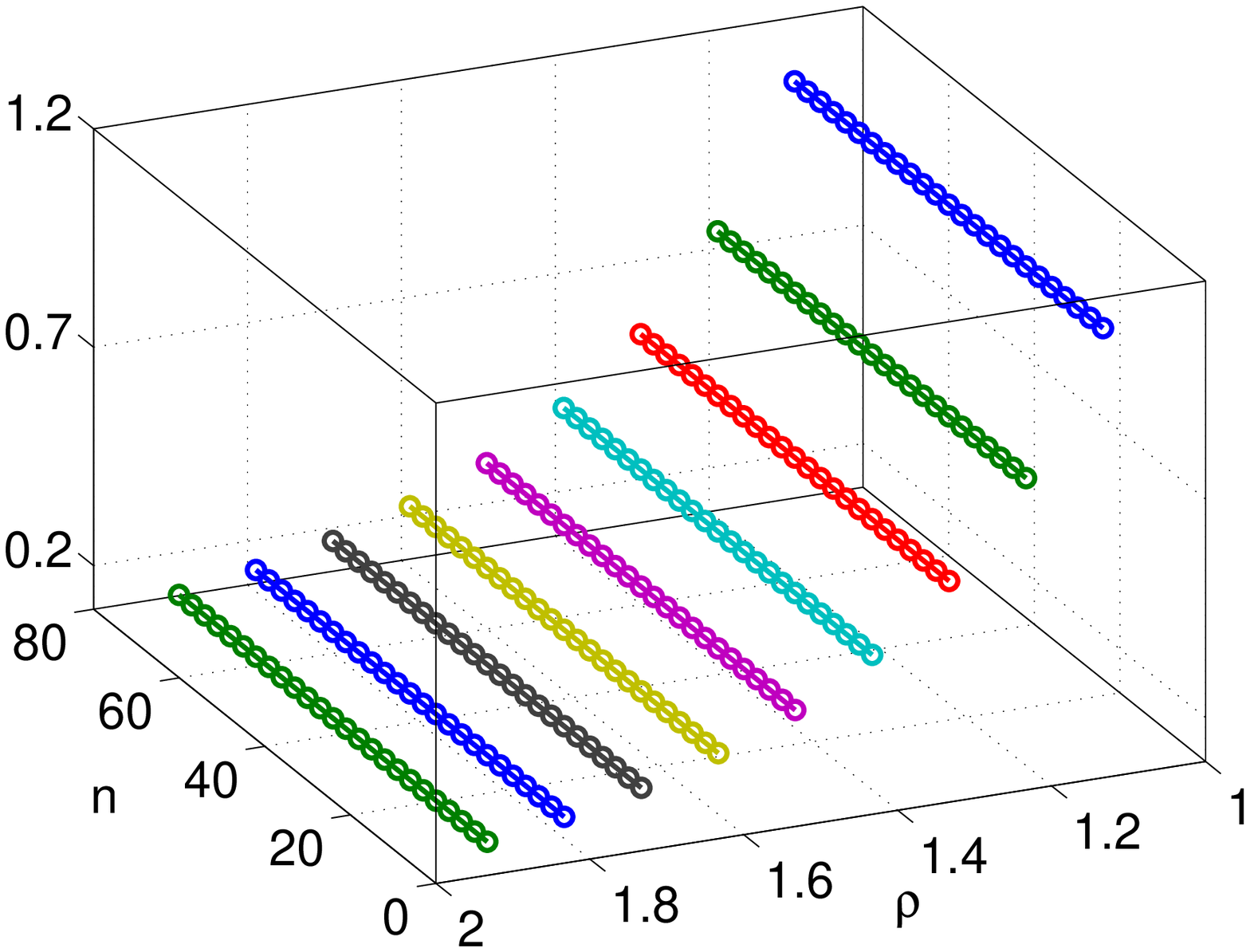}}\label{CG2quadbndcmp}
\end{minipage}}
\subfigure[$\af=2$]{
\begin{minipage}[t]{0.47\textwidth}
\centering
\rotatebox[origin=cc]{-0}{\includegraphics[width=1\textwidth]{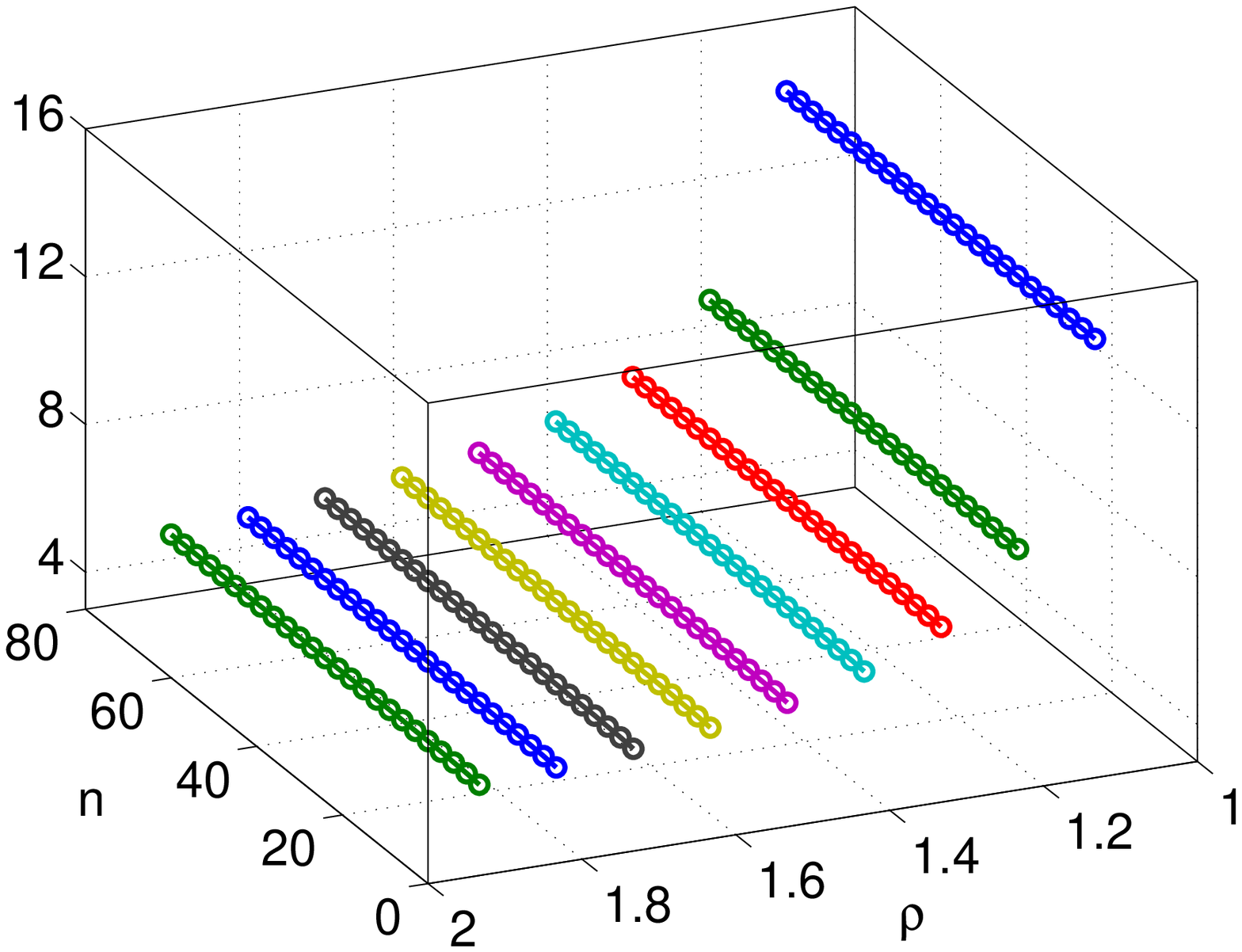}}\label{GG2quadbndcmp}
\end{minipage}}
\caption{\small (a): Comparison of error bounds for the Gegenbauer-Gauss
quadrature with $\af=1/2$ in \re{quadcheb2nd} and
\re{Hunterquadcheb2nd}. (b): Comparison of error bounds for the
Gegenbauer-Gauss quadrature with $\af=2$ in \re{Huntergenbnd} and \re{quadrerr}.}\label{figquadcmp}
 \end{figure}

\vskip 10pt

\noindent\underline{\large \bf Concluding remarks}~
\vskip 6pt
In this paper, we derived various new and sharp error bounds for Jacobi polynomial
expansions  and Gegenbauer-Gauss quadrature of  analytic functions with  analyticity characterized by the Bernstein ellipse.
We  adopted an argument that could recover the best known bounds, and  attempted to make the dependence of the estimates on the parameters explicitly. Both analytic estimates and numerical comparisons with available ones  demonstrated the sharpness of the error bounds.

\vskip 10pt

\begin{appendix}

\renewcommand{\theequation}{A.\arabic{equation}}

\section{Jacobi polynomials}\label{sect:Jacobi}
We collect some  properties of Jacobi polynomials used in the paper.
For  $\alpha, \beta>-1,$ the Jacobi polynomials (see e.g.,
\cite{szeg75}), denoted by $J_n^{\af,\bt}(x), x\in I:=(-1,1),$ are
defined by  the Rodrigues' formula
\begin{equation}\label{Rodrigues}
(1-x)^{\alpha}(1+x)^{\beta}J_n^{\alpha,\beta}(x)=\frac{(-1)^n}{2^n
n!}\frac{d^n}{dx^n}\Big[(1-x)^{\alpha+n}(1+x)^{\beta+n}\Big],\quad
n\ge 0.
\end{equation}
The Jacobi polynomials satisfy
\begin{subequations} \label{mixjacobi}
 \begin{gather}
(1-x)J_n^{\alpha+1,\beta}(x)=\frac{2}{2n+\alpha+\beta+2}\big((n+\alpha+1)J_n^{
\alpha,\beta}(x)-(n+1)J_{n+1}^{\alpha,\beta}(x)\big),\label{mixjacobi1}\\
(1+x)J_n^{\alpha,\beta+1}(x)=\frac{2}{2n+\alpha+\beta+2}\big((n+\beta+1)J_n^{
\alpha,\beta}(x)+(n+1)J_{n+1}^{\alpha,\beta}(x)\big).
\label{mixjacobi2}
 \end{gather}
\end{subequations}
As a  direct consequence of \eqref{mixjacobi}, we have that for any
$k,l\in {\mathbb N}=\{0,1,\cdots\},$
\begin{equation}\label{compexps}
(1-x)^k(1+x)^l J_n^{\af+k,\bt+l}(x)=\sum_{i=n}^{n+k+l}
d_i^{\alpha+k,\beta+l} J_i^{\alpha,\beta}(x),
\end{equation}
where $\{d_i^{\alpha+k,\beta+l}\}_{i=n}^{n+k+l}$ is a unique set of
constants (with $ d_n^{\alpha,\beta}=1$), computed from
\eqref{mixjacobi} recursively. Here, we sketch the proof of \eqref{compexps}. To this end,    let $\{c_j\}$ be a set of generic constants. Using
\eqref{mixjacobi1} and \eqref{mixjacobi2} repeatedly leads to
\begin{equation*}
\begin{split}
&(1-x)^{k}(1+x)^{l}J_n^{\af+k,\bt+ l}(x)\\
&\qquad=(1-x)^{k-1}(1+x)^{l}\big(c_1J_n^{\af+k-1,\bt+l}(x)+c_2J_{n+1}^{\af+k-1,\bt+l}(x)\big)\\
&\qquad=\cdots=(1+x)^{l}\sum_{m=n}^{n+k}c_mJ_m^{\af,\bt+l}(x)=\cdots=\sum_{m=n}^{n+k+l}c_m
J_m^{\af,\bt}(x).
\end{split}
\end{equation*}
This yields \eqref{compexps}.
We point out that for $\af=\bt=0,$ $\{(1-x)^k(1+x)^l J_n^{k,l}\}$ (up to a certain constant factor)  are defined as generalized Jacobi polynomials in \cite{Guo.SW06}.

The following formula, derived from  \cite[Lemma 7.1.1]{GAndrew99}
(also see \cite[Theorem 3.21]{ShenTangWang2011}), was used for the derivation of
\eqref{tempwa0}:
\begin{equation}\label{cocoefng}
\begin{split}
\hat c_j^n:&=\hat c_j^n(\af,\bt,a, b)=\frac 1 {\gamma_n^{\af,\bt}}
\int_{-1}^1 J_{n+j}^{a,b}(x) J_{n}^{\af,\bt}(x) \omega^{\af,\bt}(x)\, dx\\
&=\frac{\Gamma(n+j+a+1)}{\Gamma(n+j+a+b+1)}\frac{(2n+\af+\bt+1)\Gamma(n+\af+\bt+1)}
{\Gamma(n+\af+1)}\\
&\quad \times \sum_{m=0}^{j}\frac{(-1)^m \Gamma(2n+j+m+a+b+1)
\Gamma(n+m+\af+1)}{m!(j-m)!\Gamma(n+m+a+1)\Gamma(2n+m+\af+\bt+2)},
\end{split}
\end{equation}
for $a, b, \af, \bt>-1$ and $n, j\ge 0.$

Let $T_n(x)=\cos(n\,{\rm arccos}(x))$ be the Chebyshev polynomial of
the first kind of degree $n.$ Then  the second-kine  Chebyshev polynomial, denoted by $U_n(x),$ can be expressed
by
\begin{equation}\label{chbytype2}
\begin{split}
U_n(x)=\frac{\sin\big((n+1)\,{\rm
arccos}(x)\big)}{\sqrt{1-x^2}}=\frac{T_{n+1}'(x)}{n+1}
=\sqrt{\frac{\pi}{2}}\frac{J_n^{1/2,1/2}(x)}{\sqrt{\gamma_n^{1/2,1/2}}}.
\end{split}
\end{equation}
The Chebyshev polynomials enjoy  the following important properties:
\begin{subequations} \label{dtn}
 \begin{gather}
 J^{-1/2,-1/2}_n(x)= J^{-1/2,-1/2}_n(1) T_n(x)=\frac{\Gamma(n+1/2)}{\sqrt{\pi}n!}T_n(x)
 ,\label{dtn0}\\
T'_n(x)=2n\underset{k+n\;\text{odd}}{\underset{k=0}{\sum^{n-1}}}
\frac1{c_k}T_k(x), \label{dtn1}
 \end{gather}
\end{subequations}
where $c_0=2$ and $c_k=1$ for $k\ge 1.$

\renewcommand{\theequation}{B.\arabic{equation}}
\section{Proof of Lemma \ref{LM1}}\label{pflemma3.1}

We first show that
\begin{equation}\label{Jcoe3a}
\begin{split}
\hat u_n^{\af,\bt} &=\frac{1}{\pi
\ri}\oint_{\mathcal{E}_\rho}Q_n^{\af,\bt}(z)u(z)\,dz,
\end{split}
\end{equation}
where
\begin{equation}\label{Qexpa}
Q_n^{\af,\bt}(z):=\frac{1}{2\gamma_n^{\alpha,\beta}}\int_{-1}^{1}
\frac{J_n^{\alpha,\beta}(x) \omega^{\alpha,\beta}(x)}{z-x}\,dx,
\end{equation}
and $\gamma_n^{\alpha,\beta}$ is given by \eqref{gammafd}. Recall the Cauchy's integral formula:
\begin{equation}
\begin{split}\label{Cauc}
\frac{d^n}{dx^n}u(x)=\frac{n!}{2\pi
\ri}\oint_{\mathcal{E}_\rho}\frac{u(z)}{(z-x)^{n+1}}\,dz.
\end{split}
\end{equation}
 Using the  Rodrigues' formula \eqref{Rodrigues} and integration by parts leads to
\begin{equation}
\begin{split}\label{Jcoe2}
\hat
u_n^{\af,\bt}&\overset{(\ref{xiangres})}=\frac{1}{\gamma_n^{\alpha,\beta}}\int_{-1}^1u(x)J_n^{\alpha,\beta}(x)\omega^{\alpha,\beta}(x)\,dx
\\&\overset{(\ref{Rodrigues})} =\frac{1}{\gamma_n^{\alpha,\beta}}\frac{(-1)^n}{2^nn!}\int_{-1}^1\omega^{\alpha+n,\beta+n}(x)\frac{d^n}{dx^n}u(x)\,dx\\
&\overset{(\ref{Cauc})}=\frac{1}{\gamma_n^{\alpha,\beta}}\frac{1}{2^nn!}\int_{-1}^1\Big(\frac{n!}{2\pi
\ri}\oint_{\mathcal{E}_\rho}\frac{u(z)}{(z-x)^{n+1}}\,dz\Big)\omega^{\alpha+n,\beta+n}(x)\,dx\\
&=\frac{1}{2^n\gamma_n^{\alpha,\beta}}\frac{1}{2\pi
\ri}\oint_{\mathcal{E}_\rho}\Big(\int_{-1}^1\frac{\omega^{\alpha+n,\beta+n}(x)}{(z-x)^{n+1}}\,dx\Big)u(z)\,dz.
\end{split}
\end{equation}
We find from integration by parts that
\begin{align}\label{eqnadd}
\int_{-1}^1\frac{\omega^{\alpha+n,\beta+n}(x)}{(z-x)^{n+1}}\,dx=
\frac{(-1)^n}{n!}
\int_{-1}^1\frac{1}{z-x}\frac{d^n}{dx^n}\omega^{\alpha+n,\beta+n}(x)\,dx.
\end{align}
Inserting \eqref{eqnadd} into \eqref{Jcoe2},  we derive from  the
Rodrigues' formula \eqref{Rodrigues} that
\begin{equation*}
\begin{split}\label{Jcoe3}
\hat u_n^{\af,\bt} &=\frac{1}{2\pi
\ri}\frac{1}{\gamma_n^{\alpha,\beta}}\oint_{\mathcal{E}_\rho}\Big(\int_{-1}^1\frac{\omega^{\alpha,\beta}(x)J_n^{\alpha,\beta}(x)}{z-x}\,dx\Big)u(z)\,dz
=\frac{1}{\pi \ri}\oint_{\mathcal{E}_\rho}Q_n^{\af,\bt}(z)u(z)\,dz,
\end{split}
\end{equation*}
where $Q_n^{\af,\bt}(z)$ is given in \eqref{Qexpa}.

Since $z=(w+w^{-1})/2,$   we have from the generating function of the Chebyshev polynomial of the second-kind
(cf. \cite{Abr.I64}) that
\begin{equation}\label{Useries}
\frac 1 {z-x}=\frac 2 w \frac{1}{w^{-2}-2xw^{-1}+1}=\frac 2 w
\sum_{k=0}^\infty \frac{U_k(x)}{w^k}.
\end{equation}
Inserting it into  \eqref{Qexpa}, we find from the  orthogonality of the Jacobi
polynomials (cf. \eqref{jacobi_orth}) that
\begin{equation}\label{Jcoe3a0s}
\begin{split}
Q_n^{\af,\bt}(z) &=\frac{1}{\gamma_n^{\alpha,\beta}}\sum_{k=0}^\infty \frac 1 {w^{k+1}}
\int_{-1}^{1} U_k(x) J_n^{\alpha,\beta}(x) \omega^{\alpha,\beta}(x)\,dx\\
&=\frac{1}{\gamma_n^{\alpha,\beta}}\sum_{k=n}^\infty \frac 1 {w^{k+1}}
\int_{-1}^{1} U_k(x) J_n^{\alpha,\beta}(x) \omega^{\alpha,\beta}(x)\,dx \\
&=\frac{1}{\gamma_n^{\alpha,\beta}}\sum_{j=0}^\infty \frac 1 {w^{n+j+1}}
\int_{-1}^{1} U_{n+j}(x) J_n^{\alpha,\beta}(x) \omega^{\alpha,\beta}(x)\,dx
=\sum_{j=0}^{\infty}\frac{\sigma_{n,j}^{\af,\bt}}{w^{n+j+1}},
\end{split}
\end{equation}
where we defined
\begin{equation*}
\sigma_{n,j}^{\af,\bt}= \frac{1}{\gamma_n^{\af,\bt}}
 \int_{-1}^1U_{n+j}(x) J_n^{\alpha,\beta}(x)\omega^{\alpha,\beta}(x)
 \,dx,
 \end{equation*}
Substituting the last identity of \eqref{Jcoe3a0s} into \eqref{Jcoe3a} leads to the desired formula \eqref{expancoef}.

\end{appendix}



\begin{thebibliography}{}

\end{thebibliography}


\begin{thebibliography}{10}

\bibitem{Abr.I64}
M.~Abramovitz and I.A. Stegun.
\newblock {\em {Handbook of Mathematical Functions}}.
\newblock Dover, New York, 1972.

\bibitem{GAndrew99}
G.E. Andrews, R.~Askey, and R.~Roy.
\newblock {\em Special Functions}, volume~71 of {\em Encyclopedia of
  Mathematics and its Applications}.
\newblock Cambridge University Press, Cambridge, 1999.

\bibitem{Basu70}
N.K. Basu.
\newblock Error estimates for a {C}hebyshev quadrature method.
\newblock {\em Math. Comp.}, 24:863--867, 1970.

\bibitem{BernardiMaday97}
C.~Bernardi and Y.~Maday.
\newblock {Spectral Methods}.
\newblock In {\em {Handbook of Numerical Analysis, {V}ol. {V}}}, Handb. Numer.
  Anal., V, pages 209--485. North-Holland, Amsterdam, 1997.

\bibitem{Bernstain}
S.N. Bernstein.
\newblock Sur l'ordre de la meilleure approximation des fonctions continues par
  des polynomes de degre donne.
\newblock {\em M´emoires publi´es par la class des sci. Acad. de Belgique},
  2(4):1--103, 1912.

\bibitem{Boyd94}
J.P. Boyd.
\newblock The rate of convergence of {F}ourier coefficients for entire
  functions of infinite order with application to the {W}eideman-{C}loot
  sinh-mapping for pseudospectral computations on an infinite interval.
\newblock {\em J. Comput. Phys.}, 110(2):360--372, 1994.

\bibitem{Boyd01}
J.P. Boyd.
\newblock {\em {C}hebyshev and {F}ourier {S}pectral {M}ethods}.
\newblock Dover Publications Inc., 2001.

\bibitem{CHQZ06}
C.~Canuto, M.Y. Hussaini, A.~Quarteroni, and T.A. Zang.
\newblock {\em {Spectral Methods: Fundamentals in Single Domains}}.
\newblock Springer, Berlin, 2006.

\bibitem{Chawla69}
M.M. Chawla.
\newblock Asymptotic estimates for the error of the {G}auss-{L}egendre
  quadrature formula.
\newblock {\em Comput. J.}, 11:339--340, 1968/1969.

\bibitem{Chawla68SIAM}
M.M. Chawla.
\newblock On {D}avis's method for the estimation of errors of
  {G}auss-{C}hebyshev quadratures.
\newblock {\em SIAM J. Numer. Anal.}, 6:108--117, 1969.

\bibitem{Chawla68}
M.M. Chawla and M.K. Jain.
\newblock Error estimates for {G}auss quadrature formulas for analytic
  functions.
\newblock {\em Math. Comp.}, 22:82--90, 1968.

\bibitem{davis1975interpolation}
P.J. Davis.
\newblock {\em {Interpolation and Approximation}}.
\newblock Dover Publications, Inc, New York, 1975.

\bibitem{Dav.R84}
P.J. Davis and P.~Rabinowitz.
\newblock {\em Methods of Numerical Integration}.
\newblock Computer Science and Applied Mathematics. Academic Press Inc.,
  Orlando, FL, second edition, 1984.

\bibitem{Fornberg96}
B.~Fornberg.
\newblock {\em A Practical Guide to Pseudospectral Methods}, volume~1 of {\em
  Cambridge Monographs on Applied and Computational Mathematics}.
\newblock Cambridge University Press, Cambridge, 1996.

\bibitem{Funa92}
D.~Funaro.
\newblock {\em Polynomial Approxiamtions of Differential Equations}.
\newblock Springer-Verlag, 1992.

\bibitem{GTV90}
W.~Gautschi, E.~Tychopoulos, and R.S. Varga.
\newblock A note on the contour integral representation of the remainder term
  for a {G}auss-{C}hebyshev quadrature rule.
\newblock {\em SIAM J. Numer. Anal.}, 27(1):219--224, 1990.

\bibitem{Gautschi83}
W.~Gautschi and R.S. Varga.
\newblock Error bounds for {G}aussian quadrature of analytic functions.
\newblock {\em SIAM J. Numer. Anal.}, 20(6):1170--1186, 1983.

\bibitem{gottlieb1977numerical}
D.~Gottlieb and S.A. Orszag.
\newblock {\em {Numerical Analysis of Spectral Methods: Theory and
  Applications}}.
\newblock Society for Industrial Mathematics, 1977.

\bibitem{Got.S97}
D.~Gottlieb and C.W. Shu.
\newblock On the {G}ibbs phenomenon and its resolution.
\newblock {\em SIAM Rev.}, 39(4):644--668, 1997.

\bibitem{gottlieb1992gibbs}
D.~Gottlieb, C.W. Shu, A.~Solomonoff, and H.~Vandeven.
\newblock {On the Gibbs phenomenon I: recovering exponential accuracy from the
  Fourier partial sum of a nonperiodic analytic function}.
\newblock {\em Journal of Computational and Applied Mathematics},
  43(1-2):81--98, 1992.

\bibitem{Guo98}
B.Y. Guo.
\newblock {\em Spectral Methods and Their Applications}.
\newblock World Scientific Publishing Co. Inc., River Edge, NJ, 1998.

\bibitem{Guo.SW06}
B.Y.~Guo, J.~Shen, and L.L. Wang.
\newblock Optimal spectral-{G}alerkin methods using generalized {J}acobi
  polynomials.
\newblock {\em J. Sci. Comput.}, 27(1-3):305--322, 2006.

\bibitem{Hale.Tr08}
N.~Hale and L.N.~Trefethen.
\newblock New quadrarture formulas from conformal maps.
\newblock {\em SIAM J. Numer. Anal.}, 46(2):930--948, 2008.


\bibitem{GotHes07}
J.S. Hesthaven, S.~Gottlieb, and D.~Gottlieb.
\newblock {\em Spectral Methods for Time-Dependent Problems}, volume~21 of {\em
  Cambridge Monographs on Applied and Computational Mathematics}.
\newblock Cambridge University Press, Cambridge, 2007.

\bibitem{Hunter1995}
D.B. Hunter.
\newblock Some error expansions for {G}aussian quadrature.
\newblock {\em BIT}, 35(1):64--82, 1995.

\bibitem{Hunter98}
D.B. Hunter and G.~Nikolov.
\newblock Gaussian quadrature of {C}hebyshev polynomials.
\newblock {\em J. Comput. Appl. Math.}, 94(2):123--131, 1998.

\bibitem{Kambo70}
N.S. Kambo.
\newblock Error of the {N}ewton-{C}otes and {G}auss-{L}egendre quadrature
  formulas.
\newblock {\em Math. Comp.}, 24:261--269, 1970.

\bibitem{lorentz}
G.G. Lorentz.
\newblock {\em Approximation of Functions}.
\newblock AMS Chelsea Publishing Company, 1966.

\bibitem{Mason03}
J.C. Mason and D.C. Handscomb.
\newblock {\em Chebyshev Polynomials}.
\newblock Chapman \& Hall/CRC, Boca Raton, FL, 2003.

\bibitem{ReddyWeideman2005}
S.C. Reddy and J.A.C. Weideman.
\newblock The accuracy of the {C}hebyshev differencing method for analytic
  functions.
\newblock {\em SIAM J. Numer. Anal.}, 42(5):2176--2187 (electronic), 2005.

\bibitem{rivlin1990chebyshev}
T.J. Rivlin and V.~Kalashnikov.
\newblock {\em {Chebyshev Polynomials: From Approximation Theory to Algebra and
  Number Theory}}.
\newblock Wiley New York, 1990.

\bibitem{ShenTangWang2011}
J.~Shen, T.~Tang, and L.L. Wang.
\newblock {\em {Spectral Methods : Algorithms, Analysis and Applications}},
  volume~41 of {\em Series in Computational Mathematics}.
\newblock Springer, 2011.

\bibitem{szeg75}
G.~Szeg\"o.
\newblock {\em Orthogonal Polynomials (fourth edition)}.
\newblock AMS Coll. Publ., 1975.

\bibitem{tadmor1986exponential}
E.~Tadmor.
\newblock {The exponential accuracy of Fourier and Chebyshev differencing
  methods}.
\newblock {\em SIAM Journal on Numerical Analysis}, 23(1):1--10, 1986.

\bibitem{Tref00}
L.N. Trefethen.
\newblock {\em Spectral Methods in {MATLAB}}.
\newblock Software, Environments, and Tools. Society for Industrial and Applied
  Mathematics (SIAM), Philadelphia, PA, 2000.

\bibitem{TrefSIAMRev08}
L.N. Trefethen.
\newblock Is {G}auss quadrature better than {C}lenshaw-{C}urtis?
\newblock {\em SIAM Rev.}, 50(1):67--87, 2008.

\bibitem{Xiang2011}
H.Y. Wang and S.H. Xiang.
\newblock On the convergence rates of {L}egendre approximation.
\newblock {\em Math. Comp.}, 81(278):861--877, 2012.

\bibitem{xiang2012error}
S.H. Xiang.
\newblock On error bounds for orthogonal polynomial expansions and {G}auss-type
  quadrature.
\newblock {\em SIAM J. Numer. Anal.}, 50(3):1240--1263, 2012.

\bibitem{XieWangZhao2011}
Z.Q. Xie, L.L. Wang, and X.D. Zhao.
\newblock On exponential convergence of {G}egenbauer interpolation and spectral
  differentiation.
\newblock {\em Math. Comp.}, available  online since August 21, 2012.

\bibitem{ZhangZM04}
Z.M.~Zhang.
\newblock Superconvergence of spectral collocation and {$p$}-version methods in
  one dimensional problems.
\newblock {\em Math. Comp.}, 74(252):1621--1636 (electronic), 2005.

\bibitem{ZhangZM08}
Z.M.~Zhang.
\newblock Superconvergence of a {C}hebyshev spectral collocation method.
\newblock {\em J. Sci. Comput.}, 34(3):237--246, 2008.

\bibitem{ZhangInp2012}
Z.M.~Zhang.
\newblock Superconvergence points of spectral interpolation.
\newblock {\em arXiv:1204.5813}, 2012.

\end{thebibliography}

\end{document}